\documentclass[a4paper]{amsart}
\usepackage{graphicx}
\usepackage{amsmath,amsthm,amsfonts,amssymb,longtable,bm,booktabs,tikz-cd,colortbl}
\usepackage[hang,small,bf]{caption}
\usepackage{subcaption}
\usepackage[margin=30truemm]{geometry}
\usepackage[T1]{fontenc}
\captionsetup{compatibility=false}
\pagestyle{plain}

\theoremstyle{plain}
\newtheorem{thm}{Theorem}[section]
\newtheorem{prp}[thm]{Proposition}
\newtheorem{lem}[thm]{Lemma}
\newtheorem{cor}[thm]{Corollary}
\theoremstyle{definition}
\newtheorem{dfn}[thm]{Definition}

\newtheorem{rmk}[thm]{Remark}

\newcommand{\Aut}{\operatorname{Aut}}
\newcommand{\kernel}{\operatorname{Ker}}
\newcommand{\image}{\operatorname{Im}}

\newcommand{\id}{\operatorname{id}}
\newcommand{\GL}{\operatorname{GL}}
\newcommand{\PGL}{\operatorname{PGL}}
\newcommand{\diag}{\operatorname{diag}}
\newcommand{\Pic}{\operatorname{Pic}}
\newcommand{\maxx}{\operatorname{max}}

\newcommand{\PSU}{\operatorname{PSU}}

\title{The octanomial normal forms of cubic surfaces with applications to automorphisms}
\author{China Kaneko}
\address{Department of Mathematics, 
Faculty of Science and Technology, Tokyo University of Science}
\email{china\_kaneko@alumni.tus.ac.jp}

 \begin{document}
 
 \begin{abstract}
 	We will show that 
	in any characteristic
	every nonsingular cubic surface is projectively isomorphic to the surface 
	given by the octanomial normal form. 
	This normal form is discovered 
	in \cite{Pan20} only in characteristic $0$ in a heavily computational way.  
	Our proof is more conceptual and characteristic-free. 
	As an application, 
	we give octanomial normal forms of the strata of the coarse moduli space of cubic surfaces defined in \cite{DD19},
	which preserve most specialization with respect to automorphisms. 
 \end{abstract}
 
 \maketitle
 
 \setcounter{tocdepth}{1}
 \tableofcontents
 
\section{Introduction}

The aim of this paper is 
to discuss some good properties of the new normal form of nonsingular cubic surfaces in the $3$-dimensional projective space $\mathbb{P}^3$, discovered by M. Panizzut, E-C. Sert\"oz and B. Sturmfels \cite{Pan20}. 

After the discovery of the $27$ lines by A. Cayley and G. Salmon in $1849$, 
nonsingular cubic surfaces in $\mathbb{P}^3$ have been well-researched. 
The configuration of the $27$ lines on a nonsingular cubic surface is independent of the surface. 
Classical theory introduced essential tools defined as suitable configuration of some lines, 
such as the Schl\"{a}fli's double-six and the triad of tritangent trios, 
which work in characteristic-free. 
Recently,
K. Ranestad and B. Sturmfels proposed the $27$ questions about tropicalization of nonsingular cubic surfaces \cite{RKS19}, 
and nonsingular cubic surfaces come into the spotlight again. 
In particular, the research in aspects of tropical geometry and computational method is rapidly developing. 

In the history of nonsingular cubic surfaces, 
the discovery of good normal forms has played an important role. 
A \textit{normal form} of nonsingular cubic surfaces is a homogeneous cubic form in $4$ variables which represents general nonsingular cubic surfaces. 
For example, the Sylvester normal form is a classical normal form in characteristic $\neq 2,3$. 
Over $\mathbb{C}$, 
the elementary symmetric polynomials of degree $1,\dots, 5$ of the parameters of this normal form  
provide the coordinates of the birational model $\mathbb{P}(1,2,3,4,5)$ of the coarse moduli space of nonsingular cubic surface  \cite[Subsection 9.4.5]{Dol12}. 

The Sylvester normal form is a useful normal form 
but there are some nonsingular cubic surfaces which are defined by no Sylvester forms even in characteristic $0$. 
In addition, it is even not a normal form in characteristic $2,3$. 
The Emch normal form is one in any characteristic 
(\cite{Emc31} in characteristic $0$, \cite[Theorem 6.3]{DD19} in any characteristic) 
although it does not represent all nonsingular cubic surfaces. 
 The Cremona's hexahedral form is one which represents all, 
although it is not a normal form in characteristic $3$. 

In 2020, M. Panizzut and the others presented the following normal form \cite{Pan20}: 
\[ 
 a_0x_1x_2x_3 + a_1x_0x_2x_3 + a_2x_0x_1x_3 + a_3x_0x_1x_2 
	 + a_4x_0^2x_1 + a_5x_0x_1^2 + a_6x_2^2x_3 + a_7x_2x_3^2 = 0, 
\] 
where $a_0,\dots ,a_7$ were parameters. 
They called this equation the \textit{octanomial form}. 
They showed that in characteristic $0$
every cubic surface given by blowing up $6$ points in $\mathbb{P}^2$ was projectively isomorphic to the surface given by the above equation. 
They discovered this normal form to solve a question about the tropicalization of a cubic surface. 
Their methods to introduce and to investigate this normal form are heavily computational. 

As the first main theorem of this paper, 
we show that the octanomial normal form represents \textit{all} nonsingular cubic surfaces in \textit{any} characteristic. 
Our method works in any characteristic, 
and is more conceptual and computation-free. 

 \begin{thm}\label{octa}
	In any characteristic, 
	every nonsingular cubic surface is projectively isomorphic to a surface in $\mathbb{P}^3$ given by the equations: 
	\begin{equation}\label{eq:octa}
	x_0x_1(x_0 + x_1 + a_3x_2 + a_2x_3) + x_2x_3(a_1x_0 + a_0x_1 + x_2+x_3) = 0, 
	\end{equation}
	where $a_0,\dots ,a_3$ are parameters. 
	We call the equation (\ref{eq:octa}) the \textit{octanomial normal form}, 
	and call $a_0,\dots ,a_3$ the \textit{octanomial parameter}. 
\end{thm}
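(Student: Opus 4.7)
My plan is to read off a geometric meaning of the octanomial's monomial support. The $8$ monomials in (\ref{eq:octa}) are precisely the degree-$3$ monomials lying in the ideal $(x_0x_1,\,x_2x_3)$, so every octanomial cubic contains the four coordinate lines $L_{ij}:=\{x_i=x_j=0\}$ for $(i,j)\in\{(0,2),(0,3),(1,2),(1,3)\}$. These four lines form what I will call a \emph{quadrilateral}: adjacent pairs meet at a vertex of the coordinate simplex, while the opposite pairs $\{L_{02},L_{13}\}$ and $\{L_{03},L_{12}\}$ are skew. Each coordinate hyperplane $\{x_i=0\}$ is then a tritangent plane containing two sides of this quadrilateral and one residual line. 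Conversely, a cubic containing such a quadrilateral has equation supported in $(x_0x_1,\,x_2x_3)$. So I would reverse the construction: on an arbitrary nonsingular cubic $S$, locate a quadrilateral of lines, move its four tritangent planes to the coordinate hyperplanes, and rescale.

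First, I would exhibit a quadrilateral of lines on any nonsingular $S$. Because the incidence structure of the $27$ lines on a smooth cubic is the same in every characteristic, I may work with the standard basis coming from the realization of $S$ as a blow-up of $\mathbb{P}^2$ at six points: exceptional divisors $E_1,\dots,E_6$, line transforms $F_{ij}=H-E_i-E_j$, and conic transforms $G_i=2H-\sum_{j\ne i}E_j$. A direct check on intersection numbers in the Picard lattice shows that $(E_1,\,F_{12},\,E_2,\,G_3)$ is a quadrilateral: consecutive pairs have intersection $1$, and opposite pairs have intersection $0$ (the crucial identity $F_{12}\cdot G_3=0$ will reappear).

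Next, I would verify that the four tritangent planes $\Pi_1,\Pi_2,\Pi_3,\Pi_4$ cut out by the four consecutive sides of this quadrilateral are linearly independent in $\mathbb{P}^3$, so that a projective transformation sends them to the coordinate hyperplanes --- with the ``opposite'' pairing that sends the two tritangent planes sharing no side of the quadrilateral to $\{x_0=0\}$ and $\{x_1=0\}$, and the other two to $\{x_2=0\}$ and $\{x_3=0\}$. Linear independence is proved by restricting a hypothetical linear relation among the defining forms to one side, say $E_1$: this side lies in $\Pi_1$ and $\Pi_4$, and it meets each of $\Pi_2,\Pi_3$ transversely at the \emph{distinct} points $E_1\cap F_{12}$ and $E_1\cap G_3$ (distinct precisely because $F_{12}\cdot G_3=0$ rules out a common point). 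Two linear forms on $E_1\cong\mathbb{P}^1$ with different zeros are independent, forcing two of the four coefficients to vanish; the symmetric argument restricted to $E_2$ kills the remaining two.

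After this coordinate change, $S$ contains $L_{02},L_{03},L_{12},L_{13}$, so its equation is supported on the $8$ octanomial monomials. The four corner points $[1{:}0{:}0{:}0],\dots,[0{:}0{:}0{:}1]$ all lie on $S$ (every surviving term is divisible by at least two variables), and at each corner a direct computation of partial derivatives shows that exactly one of the coefficients of $x_0^2x_1,\,x_0x_1^2,\,x_2^2x_3,\,x_2x_3^2$ controls nonsingularity. Hence those four coefficients are all nonzero, and a diagonal substitution $x_i\mapsto\lambda_ix_i$ together with an overall scalar normalizes them to $1$, yielding (\ref{eq:octa}). The main obstacle is Step~1, the existence of the quadrilateral; but since the incidence of the $27$ lines is characteristic-independent, this reduces to a finite combinatorial check on the $E_6$-configuration. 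The linear-independence step is the most delicate geometric point, yet it too is characteristic-free, so the whole argument is uniform in the characteristic.
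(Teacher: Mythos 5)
Your proposal is correct, and it reaches the normal form by a genuinely different route than the paper. The paper's proof is a direct application of Lemma \ref{CS:tri}: every smooth cubic admits a Cayley--Salmon equation $\pi_1\pi_2\pi_3+\pi_1'\pi_2'\pi_3'=0$ attached to a conjugate pair of triads of tritangent trios (a fact resting on the determinantal-representation machinery of Section \ref{sec:det}, after Buckley--Ko\v{s}ir), and the octanomial shape falls out immediately once $(\pi_1,\pi_2,\pi_1',\pi_2')$ are sent to the coordinates, since the product structure of the two triple products forces the support. You never invoke the Cayley--Salmon identity: you use only the four lines $E_1,F_{12},E_2,G_3$ forming a $4$-cycle, and you recover the monomial support from the weaker hypothesis that the cubic merely \emph{contains} the four coordinate lines, i.e.\ that $f\in(x_0,x_2)\cap(x_0,x_3)\cap(x_1,x_2)\cap(x_1,x_3)=(x_0x_1,x_2x_3)$; the third planes $\pi_3,\pi_3'$ of the triads play no role. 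The two constructions are secretly the same choice of frame: your quadrilateral is exactly the $2\times 2$ corner configuration $\ell_{11},\ell_{12},\ell_{21},\ell_{22}$ of a conjugate pair (e.g.\ $E_1,F_{12},G_3,E_2$ inside the triad with rows $\{E_1,G_2,F_{12}\},\{G_3,F_{23},E_2\},\{F_{13},E_3,G_1\}$), and your four planes spanned by consecutive sides are the paper's $\pi_1,\pi_2,\pi_1',\pi_2'$ --- but your derivation is self-contained and more elementary, needing only the characteristic-free incidence combinatorics of the $27$ lines. Your linear-independence argument by restricting to $E_1$ and $E_2$ is also more careful than the paper's one-line version, which tacitly assumes $\pi_1=\pi_2=\pi_1'=0$ is a single point; your restriction argument handles all degenerations uniformly. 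The endgame coincides: both proofs use smoothness at the four coordinate vertices to force the coefficients of $x_0^2x_1,x_0x_1^2,x_2^2x_3,x_2x_3^2$ to be nonzero (your vertex computation is the paper's ``$b_3=0$ makes $(0,0,0,1)$ singular'' argument), and both normalize by a diagonal substitution requiring cube roots, available over the algebraically closed base in every characteristic. What the paper's heavier route buys is reuse: the Cayley--Salmon correspondence of Lemma \ref{CS:tri} is the engine for the parameter count in Remark \ref{octa_para} and for all the stratum normal forms in Section \ref{sec:main}, where the conjugate pair is chosen adapted to Eckardt points; your argument proves Theorem \ref{octa} with less machinery but would not by itself supply those later applications.
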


It follows from Theorem \ref{octa} that 
there is the surjection $\Phi\colon \mathbb{A}_k^4 \to \mathcal{M}(k)$, 
where $\mathbb{A}_k^4$ is the octanomial parameter space 
and $\mathcal{M}(k)$ is the coarse moduli space of nonsingular cubic surfaces 
over an algebraically closed field $k$ of characteristic $p\geq 0$. 
We expect that the octanomial parameters give 
a substitute of a chart of $\mathcal{M}(k)$ in a characteristic-free way. 
We show that for a nonsingular cubic surface $X$ 
the number of the octanomial parameters is 
$25920/|\Aut (X)|$ in $p\neq 3$ and $8640/|\Aut (X)|$ in $p=3$. 
In particular, 
the degree of $\Phi$ is $25920$ in $p\neq 3$ and $8640$ in $p=3$. 
See Remark \ref{octa_para} for details.

We apply Theorem \ref{octa} to automorphisms. 
For every cubic surface $X$, 
there is the injective group homomorphism $\Phi_X \colon \Aut (X) \to W(\mathsf{E}_6)$ 
from the automorphism group of $X$ to the Weyl group of the root lattice $\mathsf{E}_6$, 
unique up to conjugacy (Corollary \ref{auto}). 
By using this injective homomorphism 
and the classification of the conjugacy classes of elements in $W(\mathsf{E}_6)$ by R-W. Carter \cite{Car72} 
(denoted by the Atlas labeling $1A,\dots ,12C$), 
I. Dolgachev and A. Duncan \cite{DD19} defined 
the stratification in $\mathcal{M}(k)$ in the following way. 
Let $C$ be a conjugacy class of $W(\mathsf{E}_6)$ 
and $g$ be an automorphism of a cubic surface $X$. 
If $\Phi_X(g)$ is in $C$, $g$ is called \textit{of type $C$}. 
The \textit{stratum $C$} is the subset of $\mathcal{M}(k)$ 
of the isomorphism classes of cubic surfaces which admit some automorphisms of type $C$. 
For example, 
the stratum $2A$ consists of the isomorphism classes of cubic surfaces 
which are preserved by exchanging $x_0$ and $x_1$ in a suitable coordinates. 
Each stratum is a closed subvariety of $\mathcal{M}(k)$, which is unirational \cite[Corollary 14.2]{DD19}. 
A stratum $C'$ is a \textit{specialization} of a stratum $C$ 
if $C$ strictly contains $C'$, denoted by $C \to C'$. 
For example,
for an automorphism $g$ of type $12A$,  $g^4$ is one of type $3A$, 
so there is the specialization $3A \to 12A$.
The specialization of the strata in characteristic $0$ is shown in Figure \ref{fig:stra1}. 
See \cite[Appendix. Figures 2,3,4]{DD19} for other characteristics. 
Each row corresponds to the strata of dimension one less than the preceding row, 
where the stratum $1A$ has dimension $4$ and the strata $5A, 3C, 12A$ and $8A$ have dimension $0$. 
I. Dolgachev and A. Duncan classified full automorphism groups of nonsingular cubic surfaces 
by using the stratification in any characteristic. 

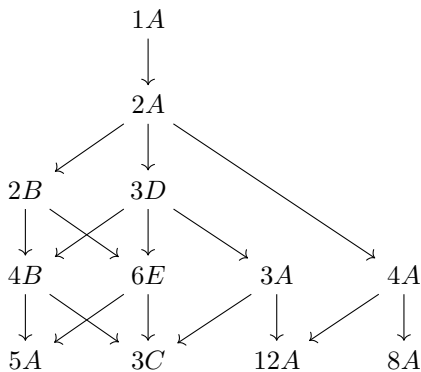
\begin{figure}[h]
	\centering
		\begin{tikzcd}
		 & 
		 1A  
		 \arrow[d]
		 & &\\
		 & 
		 2A 
		 \arrow[d]
		 \arrow[ld]
		 \arrow[rrdd]
		 & &  \\
		2B 
		 \arrow[d]
		 \arrow[rd]
		& 
		3D
		\arrow[d]
		\arrow[ld]
		\arrow[rd]
		 & & \\
		 4B
		 \arrow[d]
		 \arrow[rd] 
		 & 
		 6E
	 	\arrow[d]
		 \arrow[ld]
		  & 
		  3A
		  \arrow[d]
	  	\arrow[ld] 
		&
		  4A
		  \arrow[d]
	  	\arrow[ld] 
	  	\\
	 	5A & 3C & 12A & 8A  	
		\end{tikzcd}
		\caption{The specialization of the strata in $p\neq 2,3,5$}
		\label{fig:stra1}
\end{figure}

As our second main theorem, 
we present normal forms of the strata in $\mathcal{M}(k)$, 
which are octanomial forms 
and preserve most of the specializations of the strata. 

\begin{thm}\label{main thm}
	Let $\mathcal{M}(k)$ be the coarse moduli space of nonsingular cubic surfaces 
	over an algebraically closed field $k$ of characteristic $p\geq 0$. 
	Each stratum $C$ in $\mathcal{M}(k)$ has the octanomial normal form 
	\[ 
	x_0x_1(x_0 + x_1 + a_3x_2 + a_2x_3) + x_2x_3(a_1x_0 + a_0x_1 + x_2 + x_3) = 0
	\]
	with the parameters $a_0,\dots ,a_3$ given by the third column of Table \ref{tab:main_thm}.
	We call this the $C$-octanomial normal form.  
\end{thm}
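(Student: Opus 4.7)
The plan is to reduce Theorem \ref{main thm} to a case-by-case semi-invariance computation using Theorem \ref{octa}. Let $X$ be a nonsingular cubic surface in a stratum $C$. By definition, $X$ admits an automorphism $\alpha$ with $\Phi_X(\alpha)$ in the Carter class $C\subset W(\mathsf{E}_6)$. By Theorem \ref{octa}, $X$ can be put in the octanomial form (\ref{eq:octa}), and by Remark \ref{octa_para} the octanomial parameters of $X$ form a nonempty orbit of size $25920/|\Aut(X)|$ (resp.\ $8640/|\Aut(X)|$ in characteristic $3$) under the finite group which acts on $\mathbb{A}_k^4$ by change of octanomial coordinates. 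I would use this freedom to transport $\alpha$ to a particularly convenient linear form $\sigma_C\in\PGL_4(k)$.

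First, I would fix for each conjugacy class $C$ a matrix representative $g_C$ and read off, from the action of $g_C$ on the $27$ lines (equivalently on the root lattice $\mathsf{E}_6$), how $g_C$ permutes the eight monomials appearing in the octanomial. Because those monomials are indexed by a very rigid configuration of tritangent planes and sides of a double-six, only a small, explicit subgroup of $\PGL_4(k)$ stabilizes the octanomial support set, and this determines $\sigma_C$ up to the residual freedom in the octanomial coordinates.

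Second, having pinned down $\sigma_C$, I would impose $F(\sigma_C\cdot x)=\lambda F(x)$ on the octanomial polynomial $F$ from (\ref{eq:octa}), which yields an explicit polynomial system in $a_0,\dots,a_3$ (and in $\lambda$, treated as an unknown scalar and eliminated at the end). Solving this system, and discarding branches that correspond to a singular surface, produces the parameter families listed in the third column of Table \ref{tab:main_thm}. Completeness, i.e.\ that every surface in the stratum $C$ arises this way and not merely a subfamily, is checked by combining the dimension of the solution locus with the order $|\Aut(X)|$ of the generic surface in that stratum via the degree formula $25920/|\Aut(X)|$ from Remark \ref{octa_para}, compared against the known dimension of $C$ recorded in Figure \ref{fig:stra1} and its analogues.

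The main obstacle is the first step: for each of the Atlas classes $2A,2B,3A,\dots,12C$, choosing the right octanomial coordinates so that $\alpha$ actually becomes $\sigma_C$. Since the octanomial form fixes a pair of skew lines $\{x_0=x_1=0\}$, $\{x_2=x_3=0\}$ and the corresponding tritangent configuration, this amounts to a combinatorial matching between the $g_C$-orbit structure on the $27$ lines and this distinguished configuration; the matching exists and is essentially unique up to the known action of the normalizer, and finding it is the bulk of the work. A secondary obstacle is characteristic: in $p=2,3,5$ some strata collapse or acquire larger automorphism groups (cf.\ \cite[Appendix]{DD19}), so the semi-invariance computation must be redone for those exceptional classes, but the octanomial equations are defined over $\mathbb{Z}$ and the solutions specialize, which keeps this step routine.
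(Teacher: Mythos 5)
Your overall shape (normalize the automorphism in octanomial coordinates, then impose semi-invariance $F(\sigma_C\cdot x)=\lambda F(x)$ to cut out the parameter locus) is reasonable, but the key step is built on a false premise. You propose to pin down $\sigma_C$ by requiring it to permute the eight monomials of the octanomial, on the grounds that only a small subgroup of $\PGL_4(k)$ stabilizes the support set. For most strata this cannot work: the automorphisms realizing type $C$ on the octanomial normal form are \emph{not} monomial matrices. Already for $3D$ the automorphism (\ref{eq:3D-auto}) sends $x_1$ to $-x_0-x_1-a_3x_2-a_2x_3$ (it cyclically permutes the tritangent trio $\{x_0,\,x_1,\,\pi_1\}$, and $\pi_1$ is not a coordinate hyperplane), and for $4A$, $5A$, $8A$, $12A$ the matrices (\ref{eq:4A-auto}), (\ref{eq:5A-auto}), (\ref{eq:8A-auto}), (\ref{eq:12A-auto}) are dense. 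The object actually preserved is a conjugate pair of triads of tritangent trios, not the monomial support, and the existence of a suitable such configuration adapted to the automorphism is exactly the hard geometric content. The paper supplies it stratum by stratum via the Dolgachev--Duncan results on Eckardt points and trihedral lines (Proposition \ref{DD19:9.2} for $2A$, Lemma \ref{DD19:9.5} for $2B$, Lemmas \ref{DD19:10.1} and \ref{DD19:10.15} for $3D$, Lemma \ref{DD19:12.8} for $6E$), and where Eckardt points carry too little information ($4A$, $8A$, $12A$ in $p\neq 2$) it abandons invariant configurations entirely and instead performs explicit coordinate changes reducing to a Weierstrass-type equation, which is where the complicated conditions $(\ast)$, $(\dag)$, $(\star)$ come from. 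Your ``the matching exists and is essentially unique'' asserts precisely what needs to be proved.

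A second gap is completeness. Comparing the dimension of the semi-invariance locus with the dimension of the stratum can at best show that your family dominates a component of the stratum, i.e.\ covers a \emph{general} surface. The theorem (and the paper's proofs) claim that for most strata \emph{every} surface admitting an automorphism of type $C$ is isomorphic to one in the family; this is established by showing the relevant Eckardt-point configuration exists on every such surface, not generically. The paper is careful to downgrade to ``a general cubic surface'' exactly for $3A$, $4A$ and $4B$ in $p\neq 2$, where only generic coverage holds; your dimension count would force this weaker conclusion everywhere. Finally, the degree formula $25920/|\Aut(X)|$ from Remark \ref{octa_para} counts parameters for a fixed $X$ and does not by itself control which surfaces in the stratum are reached.
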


\begin{table}[h]	
	\centering
	\caption{The octanomial normal forms and the automorphisms of the strata} \label{tab:main_thm} 
	\begin{tabular}{cccc}
		\toprule
		Stratum& 
		Char& 
		Octanomial parameter& 
		Reference \\
		\midrule
		$2A$  &
		any &
		$a_0=a_1$ &
		Thm \ref{2A-octa} \\
		\midrule
		$2B$ & 
		any &
		$a_0=a_1, \ a_2=a_3$ & 
		Thm \ref{2B-octa} \\
		\midrule
		$3A$ & 
		$\neq 3$ &
		$a_0=a_1=0, \ a_2+ \zeta_3 a_3=0$ &
		 Thm \ref{3A-octa} \\
		 &
		 $= 3$ &
		 $a_0=a_1=0, \ a_2 + a_3 = 0$ & 
		\\
		\midrule
		 $3C$ & 
		 $\neq 3$ & 
		$a_0=a_1=a_2=a_3=0$ & 
		Thm \ref{3C-octa} \\
		\midrule
		$3D$  & 
		any &
		$a_0=a_1=0$ &  
		Thm \ref{3D-octa} \\
		\midrule
		$4A$ & 
		$\neq 2$ &  
		\begin{tabular}{l}
		$a_0 = a_1$, \\
		\small{$q^3 + 4pq + 8 -2a_0(3q^2 -4a_0q + 4p) = 0$, }\\
		\small{$3q -p^2 + a_0 (2a_0^2q - a_0(q^2 + 2p) + pq + 2) = 0$,} \\
		\footnotesize{where $p=a_2 + a_3$, $q=a_2a_3$. }\\
		\end{tabular}
		& 
		Thm \ref{4A-octa} \\
		&
		$=2$ & 
		\multicolumn{1}{|>{\columncolor[gray]{0.7}}c|}
		{(Same as $6E$)}&
		Thm \ref{6E=4B=4A-octa_char2} \\
		\midrule
		$4B$ &		
		$\neq 2$ & 
		$a_0=a_1=a_2=a_3$ &  
		Thm \ref{4B-octa} \\
		& 
		$=2$ &
		\multicolumn{1}{|>{\columncolor[gray]{0.7}}c|}{(Same as $6E$)}& 
		Thm \ref{6E=4B=4A-octa_char2} \\
		\midrule
		$5A$  & 
		$\neq 2, 5$ &
		$a_0=a_1=0 , \ a_2=a_3=-2$ & 
		Thm \ref{5A-octa} \\
		& 
		$=2$ &
		\multicolumn{1}{|>{\columncolor[gray]{0.7}}c|}{(Same as $3C$)} &
		Thm \ref{3C=5A=12A-octa_char2} \\
		\midrule
		$6E$ & 
		any &
		$a_0 = a_1 = 0, \ a_2=a_3$ &  
		Thm \ref{6E-octa} \\
		\midrule
		$8A$ &		
		$\neq 2$ & 
		\begin{tabular}{l}
		\small{Satisfying the condition of $4A$}, \\
		\small{and $\gamma (1-a_0a_2) - 2 \delta (1-a_0a_3) = 0$,} \\
		\footnotesize{
		where $\alpha = a_2^2 / (4(a_0a_2-1))$,
		$\beta = a_3^2 / (4(a_0a_3-1))$,} \\ 
		\footnotesize{
		$A = a_3\alpha + a_2\beta$, } \\
		\footnotesize{
		$\gamma = \beta^2(1-a_0a_2) + 2 \alpha \beta (1 - a_0a_3) + a_0 \beta - a_3(1 - A)/2$, }\\
		\footnotesize{
		$\delta = \alpha^2 (1-a_0a_3) + 2 \alpha \beta (1 - a_0a_2) + a_0 \alpha - a_2(1 - A)/2$.}
		\end{tabular}
		& 
		Thm \ref{8A-octa} \\
		& 
		$=3$ & 
		$a_0 = a_1 = 0, \ a_2 = i, \ a_3 = -i$ & 
		Thm \ref{8A=12A-octa_char3} \\
		\midrule
		12A & 
		$\neq 2,3$ & 
		\begin{tabular}{l}
		$a_0=a_1=0$, \\
		$a_2+\zeta_3a_3=0$, \\
		and $a_3^6 + 4\zeta_3(1-\zeta_3)a_3^3 -8 = 0$\\
		\end{tabular} & 
		Thm \ref{12A-octa} \\
		& 
		$=2$ & 
		\multicolumn{1}{|>{\columncolor[gray]{0.7}}c|}{(Same as $3C$)} & 
		Thm \ref{3C=5A=12A-octa_char2} \\
		& 
		$=3$ &
		\multicolumn{1}{|>{\columncolor[gray]{0.7}}c|}{(Same as $8A$)} & 
		Thm \ref{8A=12A-octa_char3}
		 \\
		 \bottomrule
	\end{tabular}
\end{table}

The $4A$, $8A$ and $12A$-octanomial normal forms in $p\neq 2$ are more complicated than others 
because enough geometric properties associated with these strata have not been found.

For almost any stratum $C$, 
\textit{every} nonsingular cubic surface admitting an automorphism of type $C$ is 
isomorphic to the surface defined by the $C$-octanomial normal form. 
Exceptionally, 
for one of the strata $3A$, $4A$ in $p\neq 2$, or $4B$ in $p\neq 2$, 
\textit{not every} nonsingular cubic surface is.

These octanomial normal forms of the strata preserve most specialization of the strata. 
For example, 
the $3D$-octanomial normal form ($a_0=a_1=0$) is given 
as a special case of the $2A$-octanomial normal form ($a_0 = a_1$). 
Every specialization is preserved in $p=2$ 
although some are not in $p\neq 2$, 
such as $3D \to 4B$ and $4B \to 5A$. 
For details, see Remark \ref{3D_to_4B}, \ref{4B_to_5A}.

\subsection*{Structure of the paper}
This paper is structured as follows. 
In Section \ref{sec:configu}, 
we fix notation, 
and recall the basics of nonsingular cubic surfaces 
such as the $\mathsf{E}_6$-lattice and the configuration of the $27$ lines. 
In particular, we show the well-known result: 
the permutation group of the configuration of the $27$ lines of a nonsingular cubic surface 
is isomorphic to the Weyl group of the root lattice $\mathsf{E}_6$. 
In Section \ref{sec:det}, 
we introduce 
a determinantal representation and a Cayley--Salmon equation, 
which depends on the coordinates of $\mathbb{P}^3$. 
We prove Theorem \ref{octa} in Section \ref{sec:octa}, 
by using the classical fact that every nonsingular cubic surface has Cayley--Salmon equations 
(we recall it as Lemma \ref{CS:tri}). 
In Section \ref{sec:main}, 
we show Theorem \ref{main thm} 
by recalling the arrangement of the Eckardt points 
or making coordinate change for each non-empty stratum.

\section{The configuration of lines on a cubic surface}\label{sec:configu}
Throughout this paper, 
we work over an algebraically closed field $k$ of characteristic $p \geq 0$. 
A \textit{cubic surface} is a nonsingular cubic surface in $\mathbb{P}^3_k$. 

By blowing up six points in $\mathbb{P}^2$ which are in general position
 (i.e. no three points on a line and the six points not on a conic), 
we get a nonsingular cubic surface in $\mathbb{P}^3$. 
Conversely, every nonsingular cubic surface is given in such a way. 
In particular, 
every cubic surface is embedded in $\mathbb{P}^3$ by the anti-canonical embedding,  
so we see that the automorphism group of a cubic surface is embedded in $\PGL_4(k)$.

Let $X$ be a cubic surface given by blowing up the six points $P_1,\dots ,P_6 \in \mathbb{P}^2$ in general position, 
and $\pi\colon X \to \mathbb{P}^2$ the projection. 
The Picard group $\Pic (X)$ of $X$ is a free abelian group of rank $7$ 
which is a lattice with respect to the intersection pairing. 
It is generated by the linear equivalence classes of the pullback of a line in $\mathbb{P}^2$ (denoted by $[L]$) 
and the exceptional lines above $P_1,\dots ,P_6$ (denoted by $[E_1],\dots ,[E_6]$). 
In Subsection \ref{subsec:lattice}, 
we will see fundamental properties of  
the lattice $I^{1,6}$ which is isometric to the Picard group of a cubic surface.  
The cubic surface $X$ contains exactly $27$ lines, given by the following \cite[V. Theorem 4.9]{Har77}: 
\begin{itemize}
	\item the exceptional curves $E_i$, $i=1,\dots ,6$ \ ($6$ of these); 
	\item the strict transform $F_{ij}$ of the line in $\mathbb{P}^2$ containing $P_i$ and $P_j$, 
	$1\leq i <j \leq 6$ \ ($15$ of these); 
	\item the strict transform $G_j$ of the conic in $\mathbb{P}^2$ containing the five $P_i$ for $i\neq j$, 
	$j=1,\dots ,6$ \ ($6$ of these). 
\end{itemize}
Each of them has self-intersection $-1$, 
and they are the only irreducible curves with negative self-intersection on $X$. 
In particular, 
the $27$ lines of any cubic surface has the following configuration: 
$E_i$ does not meet $E_j$ for $i\neq j$; 
$E_i$ meets $F_{jk}$ if and only if $i\in \{j,k\}$; 
$E_i$ meets $G_j$ if and only if $i\neq j$; 
$F_{ij}$ meets $F_{kl}$ if and only if $i,j,k,l$ are all distinct; 
$F_{jk}$ meets $G_i$ if and only if $i\in \{j,k\}$; 
$G_i$ does not meet $G_j$ for $i\neq j$. 
In Subsection \ref{subsec:lines}, 
we will introduce some data which are defined as the configuration of the lines on a cubic surface satisfying some conditions, 
which are useful to treat cubic surfaces in characteristic-free.

\subsection{The $\mathsf{E}_6$-lattice}\label{subsec:lattice}
A \textit{lattice} is a free abelian group $M$ of finite rank $r$  
equipped with a symmetric bilinear form $( \ , \ )\colon M \times M \to \mathbb{Z}$. 
For $\bm{v} \in M$, we write $(\bm{v}^2)$ for $(\bm{v}, \bm{v})$. 
A \textit{sublattice} of the lattice $M$ is a subgroup $N$ with the restriction of the bilinear form of $M$.
A \textit{morphism} of lattices is a morphism of abelian groups which preserves the bilinear forms. 
An invertible morphism of lattices is called an \textit{isometry}. 
The group of isometries of a lattice $M$ to itself is called the \textit{orthogonal group} of $M$, denoted by $O(M)$. 

Let $I^{1,6}$ be the lattice
$\mathbb{Z}^{7}$ equipped with the symmetric bilinear form 
defined by the diagonal matrix $\diag(1,-1,\dots, -1)$ 
with respect to the standard basis
$\bm{e_0} = (1,0,\dots ,0)$, 
$\bm{e_1} = (0,1,0,\dots ,0)$, 
$\dots$, 
$\bm{e_6}= (0,\dots ,0,1)$.
We define the vector $\bm{k} := -3 \bm{e_0} + \sum_{i=1}^6 \bm{e_i}$ in $I^{1,6}$ 
and the sublattice $\mathsf{E}_6 := \{\bm{v} \in I^{1,6}; \ (\bm{v}, \bm{k}) = 0 \}$ in $I^{1,6}$. 

A \textit{root} in $\mathsf{E}_6$ is a vector $\bm{\alpha} \in \mathsf{E}_6$ with $(\bm{\alpha} ^2) = -2$. 
Any root in $\mathsf{E}_6$ is one of the following 
$\pm \bm{\alpha_{\maxx}}, \pm \bm{\alpha_{ij}}$ or $\pm \bm{\alpha_{ijk}}$, 
and there are exactly $72$ roots in $\mathsf{E}_6$ \cite[Lemma 8.2.7]{Dol12}: 
\begin{itemize}
	\item
	$\bm{\alpha_{\maxx}} := 2\bm{e_0} - \bm{e_1} - \cdots -\bm{e_6}$ \ ($1$ of these);
	\item
	$\bm{\alpha_{ij}} := \bm{e_i} - \bm{e_j}$ ,$1\leq i < j \leq 6$ \ ($15$ of these); 
	\item
	$\bm{\alpha_{ijk}} := \bm{e_0} - \bm{e_i} -\bm{e_j} - \bm{e_k}$, $1\leq i <j < k \leq 6$ \ ($20$ of these). 
\end{itemize}
For any root $\bm{\alpha} \in \mathsf{E}_6$, 
we define the \textit{reflection} in $\bm{\alpha}$ by 
\[ r_{\bm{\alpha}} \colon I^{1,6} \longrightarrow I^{1,6} ; \ \bm{v} \longmapsto \bm{v} + (\bm{v}, \bm{\alpha})\bm{\alpha}. \]
We can see that $r_{\bm{\alpha}}$ is an isometry of order $2$
which preserves the vector $\bm{k}$, 
hence $r_{\bm{\alpha}} \in O(\mathsf{E}_6)$. 
The \textit{Weyl group} of $\mathsf{E}_6$, denoted by $W(\mathsf{E}_6)$, 
is the subgroup of the orthogonal group $O(\mathsf{E}_6)$ generated by the reflections in all roots in $\mathsf{E}_6$. 

A vector $\bm{v} \in I^{1,6}$ is called an \textit{exceptional vector} 
if $(\bm{v}, \bm{k}) = (\bm{v}^2) = -1$.
Any exceptional vector in $\mathsf{E}_6$ is one of the following,  
and there are exactly $27$ exceptional vectors \cite[Proposition 8.2.19]{Dol12}: 
\begin{itemize}
	\item
	$\bm{e_i}$, $i=1,\dots ,6$ \ ($6$ of these); 
	\item 
	$\bm{e_0} - \bm{e_i} - \bm{e_j}$, $1 \leq i < j \leq 6$ \ ($15$ of these); 
	\item
	$2 \bm{e_0} - \sum_{i=1}^6 \bm{e_i} + \bm{e_j}$, $j=1,\dots ,6$ \ ($6$ of these). 
\end{itemize}
Since reflections preserve intersection numbers, 
the Weyl group $W(\mathsf{E}_6)$ acts on the set of exceptional vectors in $I^{1,6}$. 
When $g \in O(I^{1,6})$ fixes each exceptional vector, 
we have $g(\bm{e_i}) =\bm{e_i}$ for $i=1,\dots ,6$ and 
$g(\bm{e_0}) - g(\bm{e_1}) - g(\bm{e_2}) = g(\bm{e_0} - \bm{e_1} - \bm{e_2}) = \bm{e_0}-\bm{e_1} - \bm{e_2}$, 
so $g$ fixes $\bm{e_i}$ for $i=0,\dots ,6$. 
Hence $W(\mathsf{E}_6)$ acts on the set of exceptional vectors faithfully.

\subsection{Lines on a cubic surface}\label{subsec:lines}
 
A \textit{marking} of a cubic surface $X$ is an isometry $\varphi\colon I^{1,6} \to \Pic (X)$ with $\varphi (\bm{k}) = [K_X]$. 
Given a marking $\varphi$ of $X$, 
the (ordered) $6$ lines $\varphi(\bm{e}_1), \dots , \varphi(\bm{e}_6)$ are mutually skew, 
so there is a labeling with these $6$ lines are $E_1,\dots ,E_6$ 
because the labeling of the other $21$ lines are uniquely determined by the condition of the intersection pairing with the above $6$ lines.  
Conversely, given $6$ mutually skew lines $\ell_1,\dots ,\ell_6$ on $X$, 
there is a unique marking $\varphi$ of $X$ with $\ell_i = \varphi(\bm{e}_i)$ for all $i$ \cite[V.Proposition 4.10]{Har77}. 

We fix a marking $\varphi\colon I^{1,6} \to \Pic (X)$ of a cubic surface $X$, 
then the labeling of the $27$ lines with $\varphi(\bm{e_i}) = [E_i]$ ($i=1,\dots ,6$) is fixed. 
We will define some concepts which come from the configuration of the $27$ lines.

\begin{dfn} 
	A \textit{sixer} is a set of six mutually skew lines. 
	There are exactly $72$ sixers on $X$
	(given by the first column of Table \ref{sixer:root:tw_cub_curve}). 
	A \textit{double-six} is a pair of sixeres 
	$(\{\ell_1, \dots, \ell_6 \}, 
	\{\ell'_1,\dots , \ell'_6\})$
	for which the conditions $(\ell_i. \ell'_i) = 0$ and $(\ell_i. \ell'_j) = 1$ for $i\neq j$ hold under a suitable indexing. 
	This is denoted by 
	\[ \begin{pmatrix}
		\ell_1 & \ell_2 & \ell_3 & \ell_4 & \ell_5 & \ell_6 \\ 
		\ell'_1 & \ell'_2 & \ell'_3 & \ell'_4 & \ell'_5 & \ell'_6
	\end{pmatrix}.\]
	We note that any permutation of rows and columns of the above matrix defines the same double-six. 
	There are exactly $36$ double-sixes 
	(given by the pair of sixeres $D$ and $-D$ of the first column of Table \ref{sixer:root:tw_cub_curve}). 
	By picking $3$ columns up from a double-six, 
	we get a configuration of $6$ lines, 
	which is called a \textit{half of double-six}. 
	It follows from Table \ref{sixer:root:tw_cub_curve} that 
	any half of double-six extends  to a double-six uniquely.  
\end{dfn}

\begin{lem}[{\cite[Lemma 9.1.2]{Dol12}}]\label{Dol12:9.1.2}
	Let $D=\{\ell_1, \dots, \ell_6\}$ be a sixer. 
	Then there exists a unique root $\bm{\alpha}$ in $\mathsf{E}_6$ such that 
	$(\ell_i. \varphi(\bm{\alpha})) = 1$ for $i=1,\dots ,6$. 
	We call $\bm{\alpha}$ the root associated to $D$. 
	Moreover, 
	let $\ell'_i:=\varphi(r_{\bm{\alpha}}(\varphi^{-1}(\ell_i)))$ for each $i$, 
	then $\{\ell'_1, \dots , \ell'_6\}$ is the sixer 
	to which the root associated is $-\bm{\alpha}$.
	The pair 
	$(\{\ell_1, \dots , \ell_6\}, \{\ell'_1,\dots , \ell'_6\})$
	is a double-six. 
	There is a one-to-one correspondence between sixeres and roots in $\mathsf{E}_6$ 
	(given by the first and second columns of Table \ref{sixer:root:tw_cub_curve}). 
\end{lem}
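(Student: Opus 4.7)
The plan is to pull the sixer back to the abstract lattice $I^{1,6}$ via the marking $\varphi$, so that the claim becomes a statement about a $6$-tuple of mutually orthogonal exceptional vectors $\bm{v_1},\dots,\bm{v_6}\in I^{1,6}$, and the task is to produce a root $\bm{\alpha}\in\mathsf{E}_6$ with $(\bm{v_i},\bm{\alpha})=1$ for every $i$. A short computation shows that $\{\bm{v_1},\dots,\bm{v_6},\bm{k}\}$ is linearly independent and hence a $\mathbb{Q}$-basis of $I^{1,6}\otimes_{\mathbb{Z}}\mathbb{Q}$, so the linear conditions $(\bm{\alpha},\bm{v_i})=1$ and $(\bm{\alpha},\bm{k})=0$ determine $\bm{\alpha}$ uniquely in the rational lattice. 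Solving this system using $(\bm{k}^2)=3$ and $(\bm{k},\bm{v_i})=-1$ yields the candidate
\[
\bm{\alpha}=-\tfrac{1}{3}\Bigl(2\bm{k}+\sum_{i=1}^{6}\bm{v_i}\Bigr),
\]
and expanding $(\bm{\alpha}^2)$ using the known inner products immediately gives $-2$; so $\bm{\alpha}$ will be a root as soon as it is shown to lie in $I^{1,6}$.

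For existence I would then verify integrality of this candidate. The standard sixer $\bm{v_i}=\bm{e_i}$ gives $\bm{\alpha}=2\bm{e_0}-\sum_i\bm{e_i}=\bm{\alpha_{\maxx}}$ by direct substitution, which certainly lies in $\mathsf{E}_6$. To pass to an arbitrary sixer, I would invoke the transitivity of $W(\mathsf{E}_6)$ on the set of sixers: since reflections in roots preserve both the lattice $I^{1,6}$ and the vector $\bm{k}$, integrality of $\bm{\alpha}$ is Weyl-invariant, so it suffices to have transitivity. Transitivity itself can be established using the explicit list of sixers in the first column of Table \ref{sixer:root:tw_cub_curve}: there are only a few combinatorial types, and each can be moved to $\{\bm{e_1},\dots,\bm{e_6}\}$ by a short, explicit sequence of root reflections.

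With $\bm{\alpha}$ in hand, the remaining assertions reduce to elementary lattice bookkeeping. From $(\bm{v_i},\bm{\alpha})=1$ I obtain $\bm{v'_i}:=r_{\bm{\alpha}}(\bm{v_i})=\bm{v_i}+\bm{\alpha}$, and then the identities $(\bm{v'_i},\bm{v'_j})=-\delta_{ij}$, $(\bm{v'_i},-\bm{\alpha})=1$, and $(\bm{v_i},\bm{v'_j})=1-\delta_{ij}$ show respectively that $\{\bm{v'_1},\dots,\bm{v'_6}\}$ is again a sixer, that its associated root is $-\bm{\alpha}$, and that the two sixers form a double-six in the sense of the definition. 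For the one-to-one correspondence, the assignment sixer $\mapsto$ root is injective because the sixer is recovered from $\bm{\alpha}$ as the set of exceptional vectors pairing to $1$ with it (checked for $\bm{\alpha_{\maxx}}$ by direct enumeration of the $27$ exceptional vectors, and then transported by the Weyl action on roots); since both sides have cardinality $72$, this forces bijectivity.

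The hardest step is the transitivity of $W(\mathsf{E}_6)$ on sixers, or equivalently the integrality of $\bm{\alpha}$ for an arbitrary sixer. Everything else reduces to Gram-matrix arithmetic with the relations $(\bm{v_i},\bm{v_j})=-\delta_{ij}$, $(\bm{v_i},\bm{k})=-1$ and $(\bm{k}^2)=3$, but this integrality fact requires either an explicit enumeration of the sixer-types in Table \ref{sixer:root:tw_cub_curve} combined with a case-by-case construction of root reflections, or a more conceptual connectivity argument on the graph whose vertices are the sixers and whose edges come from single-root reflections.
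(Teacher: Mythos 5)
The paper does not actually prove this lemma: it is quoted from Dolgachev \cite[Lemma 9.1.2]{Dol12}, and the only ``proof'' offered is the explicit sixer--root dictionary displayed in Table \ref{sixer:root:tw_cub_curve}. Your lattice-theoretic derivation is therefore a genuine addition rather than a retracing, and the computations check out: the Gram matrix of $\{\bm{v_1},\dots,\bm{v_6},\bm{k}\}$ has determinant $9\neq 0$, the linear system forces $\bm{\alpha}=-\tfrac{1}{3}\bigl(2\bm{k}+\sum_i\bm{v_i}\bigr)$ (which gives uniqueness for free), $(\bm{\alpha}^2)=-2$ follows from $(\bm{\alpha},\bm{v_i})=1$ and $(\bm{\alpha},\bm{k})=0$, and the identities $(\bm{v'_i},\bm{v'_j})=-\delta_{ij}$, $(\bm{v'_i},-\bm{\alpha})=1$, $(\bm{v_i},\bm{v'_j})=1-\delta_{ij}$ correctly establish the double-six and the associated root of the reflected sixer. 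What your closed formula buys, compared with the paper's table, is a conceptual explanation of \emph{why} the root exists and is unique; what the table buys is that the only nontrivial remaining point, integrality of $\bm{\alpha}$, becomes a finite check.

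On that remaining point: routing integrality through transitivity of $W(\mathsf{E}_6)$ on sixers is the one place where your argument is not self-contained, and it is also an unnecessary detour. Transitivity would itself have to be extracted from the enumeration of sixers in Table \ref{sixer:root:tw_cub_curve}, but once you grant that enumeration you can simply substitute each of the six families $\pm D_{\maxx}$, $\pm D_{ij}$, $\pm D_{ijk}$ into your formula and read off that $\bm{\alpha}$ equals $\pm\bm{\alpha_{\maxx}}$, $\pm\bm{\alpha_{ij}}$, $\pm\bm{\alpha_{ijk}}$ respectively (e.g.\ for $-D_{\maxx}$ one gets $2\bm{k}+\sum_j(2\bm{e_0}-\textstyle\sum_i\bm{e_i}+\bm{e_j})=6\bm{e_0}-3\sum_i\bm{e_i}$, hence $\bm{\alpha}=-\bm{\alpha_{\maxx}}$). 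This closes the gap with less machinery than a connectivity argument on the sixer graph, and it is exactly the content of the first two columns of the table. Your injectivity argument (recovering the sixer as the set of exceptional vectors pairing to $1$ with the root, checked on $\bm{\alpha_{\maxx}}$ and transported by the Weyl action, then counting $72=72$) is fine, provided you note that transitivity of $W(\mathsf{E}_6)$ on the $72$ roots is the standard fact being invoked there.
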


\begin{table}[b]
	\centering
	\caption{The correspondence between sixeres, roots in $\mathsf{E}_6$, 
	and linear equivalence classes of the twisted cubic curves on a cubic surface.}\label{sixer:root:tw_cub_curve}
	\begin{tabular}{lllc}
	\toprule
	\small{Sixer} & \small{Root} & \small{Class of the twisted cubic curve} & \small{Number}  \\ 
	\midrule
	$D_{\maxx} = 
		\{E_1, E_2, E_3, E_4, E_5, E_6\}$ & 
	$\bm{\alpha_{\maxx}}$ & 
	$5\bm{e_0} - \sum_{t=1}^6 2\bm{e_t}$ &
	$1$ \\
	$-D_{\maxx} = 
		\{G_1, G_2, G_3, G_4, G_5, G_6\}$ &
	$-\bm{\alpha_{\maxx}}$ &
	$\bm{e_0}$ &
	$1$ \\ 
	$D_{ij} = 
		\{E_i, G_i, F_{jk}, F_{jl}, F_{jm}, F_{jn}\}$ &
	$\bm{\alpha_{ij}}$ & 
	$3\bm{e_0} - \sum_{t=1}^6 \bm{e_t} + \bm{e_i} - \bm{e_j}$ &
	$15$ \\
	$-D_{ij} = 
		\{E_j, G_j, F_{ik}, F_{il}, F_{im}, F_{in}\}$ &
	$\bm{-\alpha_{ij}}$ &
	$3\bm{e_0} - \sum_{t=1}^6 \bm{e_t} - \bm{e_i} + \bm{e_j}$ &
	$15$ \\ 
	$D_{ijk} =
		\{E_i, E_j, E_k, F_{mn}, F_{ln}, F_{lm}\}$ & 
	$\bm{\alpha_{ijk}}$ &
	$4\bm{e_0} - (\bm{e_l} + \bm{e_m} + \bm{e_n}) -2(\bm{e_i} + \bm{e_j} + \bm{e_k})$ &
	$20$ \\
	$-D_{ijk} = 
		\{F_{jk}, F_{ik}, F_{ij}, G_l, G_m, G_n\}$ &
	$-\bm{\alpha_{ijk}}$ &
	$2\bm{e_0} - (\bm{e_l} + \bm{e_m} + \bm{e_n})$ & 
	$20$ \\
	\bottomrule
	\end{tabular}
\end{table}

A \textit{twisted cubic curve} on $X$ is a curve on $X$ of degree $3$ and genus $0$ in $\mathbb{P}^3$. 
There are exactly $72$ linear equivalence classes of twisted cubic curves on $X$ \cite[Proposition 4.4]{Buc07}, 
and they also correspond to roots in $\mathsf{E}_6$. 

\begin{lem}\label{tw_cub_curve:root}
	There is a one-to-one correspondence between 
	linear equivalence class of twisted cubic curves on $X$ and roots in $\mathsf{E_6}$, 
	given by $\bm{c} \mapsto \bm{c} + \bm{k}$ for a linear equivalence class of a twisted cubic curve $\bm{c}$ 
	(See the second and third columns of Table \ref{sixer:root:tw_cub_curve}) . 
\end{lem}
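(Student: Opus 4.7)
The plan is to characterize the class of a twisted cubic on $X$ purely numerically inside the lattice $I^{1,6}$, and then close the correspondence by a counting argument using the known enumeration in \cite{Buc07}.

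First I would translate the defining properties of a twisted cubic into numerical conditions on its class. Writing $\bm{c} := \varphi^{-1}([C]) \in I^{1,6}$ and using that the embedding $X \hookrightarrow \mathbb{P}^3$ is anticanonical (so $-K_X = \mathcal{O}_X(1)$), the degree-$3$ condition becomes $(\bm{c}, \bm{k}) = -3$. Since a twisted cubic is smooth and rational, the adjunction formula $2p_a(C) - 2 = (\bm{c}^2) + (\bm{c}, \bm{k})$ applied with $p_a(C) = 0$ yields $(\bm{c}^2) = 1$. A one-line bilinear computation, using only $(\bm{k}^2) = 3$, then shows that $\bm{\alpha} := \bm{c} + \bm{k}$ satisfies $(\bm{\alpha}, \bm{k}) = 0$ and $(\bm{\alpha}^2) = -2$, so $\bm{\alpha}$ is a root of $\mathsf{E}_6$. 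Injectivity of $\bm{c} \mapsto \bm{c} + \bm{k}$ is automatic since the map is a translation.

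The remaining step is surjectivity, which is the genuinely non-formal part: one must know that every root $\bm{\alpha}$ actually arises from an honest twisted cubic, not merely from a numerical class with the right invariants. I would finish by invoking \cite[Proposition 4.4]{Buc07}, which gives exactly $72$ linear equivalence classes of twisted cubics on $X$, matching the $72$ roots of $\mathsf{E}_6$ enumerated in Subsection \ref{subsec:lattice}; an injective map between finite sets of equal cardinality is a bijection. The explicit representatives collected in the third column of Table \ref{sixer:root:tw_cub_curve} provide, if desired, an independent case-by-case verification.

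The main obstacle is precisely this last point: the direction from twisted cubics to roots is just intersection-theoretic bookkeeping, whereas the reverse direction — showing every root class is actually effective and contains a smooth irreducible rational cubic curve — carries the real geometric content. Outsourcing this to the enumeration in \cite{Buc07} avoids having to reprove that existence statement here, and keeps the argument characteristic-free in line with the rest of the paper.
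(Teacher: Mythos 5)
Your proposal is correct, and in fact it supplies an argument where the paper gives essentially none: the paper states the lemma with only a pointer to the second and third columns of Table \ref{sixer:root:tw_cub_curve} and the citation \cite[Proposition 4.4]{Buc07}, so its implicit justification is a case-by-case verification that each listed class $\bm{c}$ satisfies $\bm{c}+\bm{k}=\bm{\alpha}$ for the corresponding root. Your route is more conceptual: the anticanonical embedding gives $(\bm{c},\bm{k})=-3$ from the degree, adjunction with $p_a(C)=0$ gives $(\bm{c}^2)=1$, and then $(\bm{k}^2)=3$ yields $((\bm{c}+\bm{k})^2)=-2$ and $(\bm{c}+\bm{k},\bm{k})=0$, so the translation lands in the roots; injectivity is free and surjectivity follows from the $72=72$ count, which both you and the paper ultimately outsource to \cite[Proposition 4.4]{Buc07}. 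What your approach buys is that it explains \emph{why} the correspondence is the shift by $\bm{k}$ and removes any need to inspect the table; what the table-based reading buys is explicit effective representatives for each root, which the rest of the paper uses anyway. One small point to make explicit: the adjunction step needs the \emph{arithmetic} genus of $C$ to vanish, so you are implicitly using that a twisted cubic is a smooth rational normal curve (the paper's one-line definition says only ``degree $3$ and genus $0$''); this is the standard convention and is what \cite{Buc07} enumerates, so it is a matter of wording rather than a gap.
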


\begin{prp}
The Weyl group $W(\mathsf{E}_6)$ is isomorphic to the permutation group of the configuration of the $27$ lines on $X$. 
\end{prp}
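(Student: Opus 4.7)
The plan is to exhibit an isomorphism $\Psi\colon W(\mathsf{E}_6) \to \mathrm{Perm}(L_{27})$, where $L_{27}$ is the set of the $27$ lines on $X$ and $\mathrm{Perm}(L_{27})$ is the group of bijections of $L_{27}$ preserving the incidence relation. The map $\Psi$ is obtained by transporting the natural action of $W(\mathsf{E}_6)$ on the $27$ exceptional vectors of $I^{1,6}$ to an action on $L_{27}$ via the marking $\varphi$.

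First, I would check that $\Psi$ is a well-defined homomorphism. Each reflection $r_{\bm{\alpha}}$ acts on all of $I^{1,6}$ as an isometry fixing $\bm{k}$ (since $(\bm{k},\bm{\alpha})=0$), so $W(\mathsf{E}_6)$ permutes the $27$ exceptional vectors and preserves all intersection numbers; through $\varphi$ this induces a permutation of $L_{27}$ preserving incidences. Injectivity is essentially the last observation of Subsection \ref{subsec:lattice}: if $w\in W(\mathsf{E}_6)$ acts trivially on every exceptional vector, it fixes $\bm{e_1},\dots ,\bm{e_6}$ and $\bm{e_0}-\bm{e_1}-\bm{e_2}$, hence fixes $\bm{e_0}$, so $w=\mathrm{id}$.

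The substantive step is surjectivity. Fix the sixer $D_0=\{E_1,\dots ,E_6\}$ and take $\sigma\in\mathrm{Perm}(L_{27})$. Since $W(\mathsf{E}_6)$ acts transitively on the $72$ roots of $\mathsf{E}_6$ (a standard fact for irreducible root systems) and sixers correspond bijectively to roots by Lemma \ref{Dol12:9.1.2}, $W(\mathsf{E}_6)$ acts transitively on sixers; on the other hand $\sigma$ sends sixers to sixers because skewness is an incidence condition. Hence there exists $w\in W(\mathsf{E}_6)$ with $\Psi(w)(D_0)=\sigma(D_0)$ setwise, and then $\tau:=\Psi(w)^{-1}\sigma$ stabilises $D_0$ setwise. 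The reflections in the roots $\bm{\alpha_{i,i+1}}=\bm{e_i}-\bm{e_{i+1}}$ swap $\bm{e_i}\leftrightarrow\bm{e_{i+1}}$ and fix the remaining $\bm{e_j}$, so they generate a subgroup $S_6\subset W(\mathsf{E}_6)$ realising every permutation of $D_0$; thus there is $s\in S_6$ with $\Psi(s)|_{D_0}=\tau|_{D_0}$. Finally, any incidence-preserving permutation of $L_{27}$ is determined by its restriction to $D_0$, since each $F_{ij}$ is the unique line meeting precisely $E_i$ and $E_j$ among $D_0$ and each $G_j$ is the unique line meeting precisely $\{E_i : i\neq j\}$. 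Therefore $\tau=\Psi(s)$ on all of $L_{27}$, giving $\sigma=\Psi(ws)$.

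The main obstacle is not technically deep but requires assembling two pieces simultaneously: transitivity of $W(\mathsf{E}_6)$ on sixers via the root correspondence of Lemma \ref{Dol12:9.1.2}, and the rigidity statement that an incidence-preserving permutation of $L_{27}$ is uniquely determined by its action on a single sixer. Both rely on the explicit configuration data recalled at the start of Section \ref{sec:configu}, together with the embedding $S_6 \hookrightarrow W(\mathsf{E}_6)$ furnished by the reflections in the roots $\bm{\alpha_{i,i+1}}$.
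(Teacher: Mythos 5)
Your proof is correct and follows essentially the same route as the paper: injectivity comes from the faithful action of $W(\mathsf{E}_6)$ on the exceptional vectors, and surjectivity from combining transitivity on sixers, the copy of $\mathfrak{S}_6$ acting on a fixed sixer via the reflections $r_{\bm{\alpha_{ij}}}$, and the rigidity that an incidence-preserving permutation of the $27$ lines is determined by its values on one sixer. The only (minor) divergence is in how transitivity on sixers is obtained: you invoke the standard transitivity of $W(\mathsf{E}_6)$ on the $72$ roots together with the $W$-equivariance of the sixer--root bijection of Lemma \ref{Dol12:9.1.2} (an equivariance worth stating explicitly, though it is a one-line check), whereas the paper exhibits explicit reflections carrying $D_{\maxx}$ to each $D_{ij}$ and $D_{ijk}$.
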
 
\begin{proof}
	Let $G$ be the permutation group of the configuration of the $27$ lines on a cubic surface.  
	Namely, $G$ is the group of permutations of the $27$ lines which preserve the intersection numbers. 
	With respect to a given marking $\varphi$ of $X$, 
	each line on $X$ correspond to  an exceptional vector. 
	Because we have seen already that $W(\mathsf{E}_6)$ acts faithfully on the set of exceptional vectors, 
	we have the injective morphism $\psi\colon W(\mathsf{E}_6) \to G$. 
	
	We will show that $\psi$ is surjective. 
	A permutation of the configuration of the $27$ lines is determined by 
	the images of the $6$ lines $E_1, \dots , E_6$ which form the sixer $D_{\maxx}$
	since the images of the other $21$ lines are determined 
	by the intersection numbers with the images of $E_1,\dots ,E_6$. 
	So it is enough to show that for each sixer $D$,  
	\begin{enumerate} 
		\item[(i)]
		 there is an element $r \in W(\mathsf{E}_6)$ which induces a permutation with $r(D_{\maxx}) = D$; and 
		 \item[(ii)]
		 there are elements of $W(\mathsf{E}_6)$ which induce permutations of the $6$ lines of $D$. 
	\end{enumerate}
	Let $\bm{\alpha}$ be the root associated to $D$, 
	then $r_{\alpha}(D) = -D$ by Lemma \ref{Dol12:9.1.2}. 
	Hence it is sufficient to find the permutations $r_{ijk}, r_{ij} \in W(\mathsf{E}_6)$ such that 
	$r_{ijk}(D_{\maxx}) = D_{ijk}$ and  $r_{ij}(D_{\maxx}) = D_{ij}$ for each $i,j,k$. 
	By calculating directly, we have 
	$r_{\bm{\alpha_{lmn}}} (D_{\maxx}) = D_{ijk}$ and 
	$r_{\bm{\alpha_{jkl}}} \circ r_{\bm{\alpha_{jmn}}} (D_{\maxx}) = D_{ij}$, 
	where $\{i,j,k,l,m,n\} = \{1,\dots ,6\}$. This shows (i). 
	For (ii), 
	it is sufficient to show the statement for the sixer $D_{\maxx}$ thanks to (i). 
	The reflection $r_{\bm{\alpha_{ij}}}$ induces the permutation swapping $E_i$ and $E_j$, 
	and these transpositions generate all permutations of the $6$ lines in $D_{\maxx}$. 
\end{proof}

\begin{cor}\label{auto}
	There exists the injective homomorphism $\Phi_X \colon \Aut (X) \to W(\mathsf{E}_6)$, 
	unique up to conjugacy. 
\end{cor}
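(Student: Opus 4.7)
The plan is to factor the desired homomorphism through the permutation group of the $27$ lines, which the preceding proposition has already identified with $W(\mathsf{E}_6)$. First, I would fix a marking $\varphi \colon I^{1,6} \to \Pic(X)$. Each $g \in \Aut(X)$ acts by pullback on $\Pic(X)$, preserving the intersection pairing and $[K_X]$. Since the $27$ lines are precisely the irreducible curves on $X$ with self-intersection $-1$, $g^*$ permutes them while preserving the configuration, yielding a homomorphism $\Aut(X) \to G$ into the permutation group $G$ of the line configuration. Composing with the isomorphism $G \cong W(\mathsf{E}_6)$ supplied by the proposition (which depends on $\varphi$) defines $\Phi_X$.

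Next I would establish injectivity. Suppose $g \in \ker \Phi_X$, so that $g$ preserves each $E_i$ as a set. Then $g$ commutes with the blow-down $\pi \colon X \to \mathbb{P}^2$: the composition $\pi \circ g$ is a morphism contracting the same six $(-1)$-curves as $\pi$, so it factors as $\bar g \circ \pi$ for a unique $\bar g \in \Aut(\mathbb{P}^2)$, and $\bar g(P_i) = \pi(g(E_i)) = \pi(E_i) = P_i$ for all $i$. Since no three of $P_1,\dots,P_6$ are collinear, any four of them are in general position and therefore determine an element of $\PGL_3(k)$ uniquely, forcing $\bar g = \id$. Hence $g$ acts as the identity on the dense open $X \setminus \bigcup_i E_i$, and $g = \id_X$.

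Finally I would handle the uniqueness up to conjugacy. The construction depends on $\varphi$, but any two markings $\varphi, \varphi'$ of $X$ are related by $\varphi' = \varphi \circ w$ for some $w \in W(\mathsf{E}_6)$, because the proposition shows $W(\mathsf{E}_6)$ realizes every permutation of the line configuration and in particular acts transitively on sixers (equivalently, on markings). The two resulting homomorphisms $\Phi_X, \Phi_X'$ are then related by conjugation by $w$ inside $W(\mathsf{E}_6)$, which is exactly the required ambiguity.

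I expect the main subtlety to be the descent step in the injectivity argument: one must check that preserving each $E_i$ only as a set (not pointwise) is already enough for $g$ to induce an automorphism of $\mathbb{P}^2$ through $\pi$, and that "four of six points in general position" genuinely suffices to rigidify $\PGL_3(k)$. An alternative route would be to work directly in $\mathbb{P}^3$ via the anticanonical embedding $\Aut(X) \hookrightarrow \PGL_4(k)$ and show that a projective linear transformation fixing enough of the $27$ lines must be the identity, but the blow-down approach is more conceptual and makes the role of the "six points in general position" hypothesis transparent.
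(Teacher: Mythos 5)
Your proposal is correct and follows essentially the same route as the paper: both factor $\Phi_X$ through the permutation group of the $27$-line configuration via a marking, and both handle the marking-dependence by observing that two markings differ by an element of $W(\mathsf{E}_6)$, which conjugates the resulting homomorphisms. The only difference is that you actually prove the faithfulness of the $\Aut(X)$-action on the configuration (by descending to $\PGL_3(k)$ fixing the six points in general position), whereas the paper simply asserts it; your added argument is sound.
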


\begin{proof}
	Giving a marking $\varphi\colon I^{1,6} \to \Pic (X)$, 
	we have the labeling of the $27$ lines of $X$. 
 	The automorphism group of $X$ acts faithfully on the configuration of the $27$ lines, 
	there exists the injective group homomorphism $\Phi_X \colon \Aut (X) \to W(\mathsf{E}_6)$. 
	 For another marking $\varphi' \colon I^{1,6} \to \Pic (X)$ of $X$, 
	we get the injective homomorphism $\Phi'_X  \colon \Aut (X) \to W(\mathsf{E}_6)$ in the same way. 
	There is a permutation $r \in W(\mathsf{E}_6)$ such that $\Phi_X (g) = r^{-1} \Phi'_X (g) r$ for $g \in \Aut (X)$. 
	(Here, $r$ sends $\varphi (\bm{e}_i)$ to $\varphi' (\bm{e}_i)$ for $i=1,\dots ,6$.)
\end{proof}

\begin{dfn}
	A \textit{tritangent plane} of $X$ is a plane $\pi$ in $\mathbb{P}^3$ which intersects $X$ with $3$ distinct lines. 
	The set of the three coplanar lines is called a \textit{tritangent trio}. 
	There are exactly $45$ tritangent planes and tritangent trios, 
	given by the following: 
	\begin{itemize}
		\item $\pi_{ij} = \{E_i, G_j, F_{ij}\}$, $1\leq i\neq j \leq 6$ \ ($2 \times \binom{6}{2} = 30$ of these);
		\item $\pi_{ij,kl,mn} =  \{F_{ij}, F_{kl}, F_{mn}\}$, $\{ i,j,k,l,m,n\} = \{1,\dots,6\}$ \ 
		($\binom{6}{2}\times\binom{4}{2}\times1/3! = 15$ of these).
	\end{itemize} 
	
	A \textit{triad of tritangent trios} is a set of $3$ tritangent trios $\{\pi_1,\pi_2,\pi_3\}$ 
	in which any two of them have no common line. 
	For a triad of tritangent trios, 
	there uniquely exists another triad of tritangent trios which consists of the same $9$ lines. 
	We refer to the pair of triads of tritangent trios which consist of common $9$ lines  
	as a \textit{conjugate pair} of triads of tritangent trios. 
	We denote a conjugate pair of triads of tritangent trios 
	$(\{\pi_1,\pi_2,\pi_3\}, \{\pi'_1, \pi'_2, \pi'_3\})$ 
	by following, like a matrix: 
	\[
	 \begin{array}{c|ccc}
	 	& \pi_1' & \pi_2' & \pi_3' \\ \hline
		\pi_1 & \ell_{11} & \ell_{12} & \ell_{13} \\
		\pi_2 & \ell_{21} & \ell_{22} & \ell_{23} \\
		\pi_3 & \ell_{31} & \ell_{32} & \ell_{33} 
	\end{array}. 
	\]
	Here, 
	the set of the three lines $\ell_{i1},\ell_{i2},\ell_{i3}$ of the $i$-th row is the tritangent trio on the tritangent plane $\pi_i$, 
	and the set of three lines $\ell_{1j},\ell_{2j},\ell_{3j}$ of the $j$-th column is the tritangent trio on the tritangent plane $\pi'_j$. 
	We sometimes write the $9$ lines only, omitting the tritangent planes. 
	We note a conjugate pair of triads of tritangent trios is invariant 
	under permutation of rows and columns, and transposition of the matrix.   
	There are exactly $120$ conjugate pairs of triads of tritangent trios given by follwing: 
	\[ \begin{matrix}
		E_i & G_j & F_{ij} \\
		G_k & F_{jk} & E_j \\
		F_{ik} & E_k & G_i
	\end{matrix} 
	\hspace{15mm}
	\begin{matrix}
		E_i & G_j & F_{ij}\\
		G_k & E_l & F_{lk} \\
		F_{ik} & F_{jl} & F_{mn}
	\end{matrix} \hspace{15mm}
	\begin{matrix}
		F_{ij} & F_{lm} & F_{kn} \\
		F_{ln} & F_{ik} & F_{jm} \\
		F_{km} & F_{jn} & F_{il}		
	\end{matrix}.\]
	There are 
	$\binom{6}{3}=20$,
	 $\binom{6}{4} \times \binom{4}{2} = 90$,
	 $\binom{5}{3} = 10$ (by putting $i=1, m<n$) of these, respectively.
\end{dfn}

\section{Determinantal representations}\label{sec:det}

We will introduce a determinantal representation and a Cayley--Salmon equation of a cubic surface. 
Let $X \subset \mathbb{P}^3$ be a cubic surface, defined by a cubic form $f \in k[x_0,\dots ,x_3]$. 
A \textit{determinantal representation} of $X$ is a $3\times 3$ matrix $M$ of linear forms satisfying 
$\det M = \lambda f$ for some $\lambda \neq 0$. 
Two determinantal representations $M, M'$ are \textit{equivalent}, denoted by $M \sim M'$, 
if there are matrices $A,B \in \GL_{3}(k)$ such  that $M'= AMB$. 
The following theorem is classical. 
\begin{thm}[{\cite[Theorem 1.1]{Buc07}}]
	A smooth cubic surface $X$ in $\mathbb{P}^3$ allows exactly $72$ nonequivalent determinantal representations. 
	There is a one-to-one correspondence between: 
	\begin{enumerate}
		\item[(i)] 
		equivalence classes of determinantal representations of $X$; 
		\item[(ii)] 
		linear systems of twisted cubic curves on $X$; 
		\item[(iii)] 
		sixeres of $X$. 
	\end{enumerate}
\end{thm}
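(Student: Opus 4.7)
\medskip

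\noindent\textbf{Proof plan.}
My plan is to build the three bijections separately, using sheaves on $\mathbb{P}^3$ as the bridge between matrices and geometry. The correspondence (ii)$\leftrightarrow$(iii) is essentially already encoded in Table \ref{sixer:root:tw_cub_curve}: a sixer $D$ determines a birational contraction $\pi_D \colon X \to \mathbb{P}^2$ blowing down the six lines of $D$, and pulling back the linear system $|\mathcal{O}_{\mathbb{P}^2}(1)|$ gives a two-dimensional linear system whose members are curves in the class $\bm{c}$ attached to $-D$ in the third column of that table. Since $\bm{c}\cdot(-K_X)=3$ and $p_a(\bm{c})=0$, the members are twisted cubics, so this produces a linear system of twisted cubics on $X$. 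The resulting map from the $72$ sixers to the $72$ twisted cubic classes is exactly the bijection $D\mapsto -D\mapsto \bm{c}$, so we obtain (ii)$\leftrightarrow$(iii).

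For (i)$\leftrightarrow$(ii) I would use the standard sheaf-theoretic dictionary. Given a determinantal representation $M$ of $X$ with $\det M=\lambda f$, view $M$ as a morphism $\mathcal{O}_{\mathbb{P}^3}(-1)^{\oplus 3}\to\mathcal{O}_{\mathbb{P}^3}^{\oplus 3}$; its cokernel $\mathcal{F}$ is supported on $\{f=0\}=X$ and, because $M$ has generic rank two on $X$, $\mathcal{F}|_X$ is a torsion-free rank-one sheaf on the smooth surface $X$, hence a line bundle $L$. The three standard generators of $\mathcal{O}_{\mathbb{P}^3}^{\oplus 3}$ induce three global sections of $L$ whose common zero locus is empty, so they define a morphism $X\to\mathbb{P}^2$; a Chern-class computation (from the resolution $0\to\mathcal{O}(-1)^{\oplus 3}\to\mathcal{O}^{\oplus 3}\to\mathcal{F}\to 0$) identifies $L$ with the line bundle whose complete linear system is a twisted cubic linear system on $X$, and the morphism with the contraction $\pi_D$ of the corresponding sixer. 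Conversely, starting from such a line bundle $L$ with $h^0(L)=3$, I would construct the determinantal representation as the matrix appearing in the minimal graded free resolution of the pushforward $i_* L$ of $L$ to $\mathbb{P}^3$; a Hilbert-function/Castelnuovo--Mumford-regularity argument shows that this resolution has the predicted shape $0\to\mathcal{O}(-1)^{\oplus 3}\to\mathcal{O}^{\oplus 3}\to i_* L\to 0$, so its differential $M$ is a $3\times 3$ matrix of linear forms with $\det M$ a scalar multiple of $f$.

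Next I would check that this assignment descends to the two equivalence classes. Multiplying $M$ on the left and right by matrices $A,B\in\GL_3(k)$ corresponds to changing the bases of $\mathcal{O}_{\mathbb{P}^3}^{\oplus 3}$ and $\mathcal{O}_{\mathbb{P}^3}(-1)^{\oplus 3}$ in the resolution, so it leaves the isomorphism class of the cokernel $L$ unchanged. In the other direction, any two minimal graded free resolutions of $i_* L$ are related by an isomorphism of complexes, which at the level of differentials is precisely a transformation $M\mapsto AMB$; thus the assignment $[M]\mapsto L$ is a well-defined bijection between (i) and (ii). Combined with (ii)$\leftrightarrow$(iii), this yields the $72$ equivalence classes of determinantal representations in bijection with the $72$ sixers.

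The main obstacle is the sheaf-theoretic step: verifying that $\mathcal{F}|_X$ is actually an invertible sheaf (rather than a rank-one sheaf with embedded components) and that the minimal free resolution of $i_* L$ on $\mathbb{P}^3$ really has the Betti numbers $(3,3)$ with linear syzygies. Both facts follow from the generic rank-two condition for $M$ on $X$, the smoothness of $X$, and a Castelnuovo--Mumford-regularity computation using $L\cdot(-K_X)=3$ and $L^2=1$, but they must be stated precisely; once they are in place, the rest of the argument is formal. This is precisely the content of Buchweitz's theorem \cite[Theorem 1.1]{Buc07}, which we invoke.
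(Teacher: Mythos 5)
The paper does not actually prove this statement: it is quoted verbatim as a classical theorem with the citation \cite[Theorem 1.1]{Buc07}, and Section \ref{sec:det} only surveys the correspondence (i)$\leftrightarrow$(iii) afterwards, via the Hilbert--Burch matrix of the six points obtained by blowing down a sixer (a $3\times 4$ matrix of linear forms in the plane coordinates $t_0,t_1,t_2$, converted to a $3\times 3$ matrix in $x_0,\dots,x_3$ by the relation (\ref{eq:det})). Your sketch takes the other standard face of the same construction, working directly on $\mathbb{P}^3$ with the cokernel sheaf of $M\colon\mathcal{O}(-1)^{\oplus 3}\to\mathcal{O}^{\oplus 3}$; this is essentially Beauville's and Buchweitz--Ko\v{s}ir's argument, and your Chern-class bookkeeping is right ($L\cdot H=3$, $L^2=1$, so $L+K_X$ is a root and $|L|$ is one of the $72$ twisted-cubic systems of Lemma \ref{tw_cub_curve:root}), as is the identification (ii)$\leftrightarrow$(iii) with the blow-down map recorded in Table \ref{sixer:root:tw_cub_curve}. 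One imprecision worth fixing if you were to write this out: a torsion-free rank-one sheaf on a smooth surface need not be invertible (it can be an ideal sheaf of points twisted by a line bundle), so ``generic rank two on $X$'' is not the right hypothesis; you need that $M$ has rank exactly $2$ at \emph{every} point of $X$, which follows because the vanishing of all $2\times 2$ minors at a point forces all partials of $\det M$ to vanish there, contradicting the smoothness of $X$. Since you ultimately invoke \cite[Theorem 1.1]{Buc07} for the technical steps, exactly as the paper does, your proposal is consistent with the paper; it simply supplies a proof outline where the paper supplies none.
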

We survey the correspondence between (i) and (ii) of the above, according to \cite[\S 2]{Buc07}. 
Let $t_0,t_1,t_2$ be a homogeneous coordinates of $\mathbb{P}^2$. 
Given a sixer of $X$, 
by blowing down them, 
we have $6$ points $P_1,\dots ,P_6$ in $\mathbb{P}^2$ in a general position. 
By Hilbert--Burch theorem, 
there is a $3\times 4$ matrix $L$ of linear forms of $t_0,t_1,t_2$ 
whose minors form a basis of the linear system 
$| \mathcal{O}_{\mathbb{P}^2} (3) - \sum_{i=1}^6  P_{i} |$ 
of plane cubic curves with the assigned points. 
Let $M$ be a $3\times 3$ matrix of linear forms of $x_0,\dots ,x_3$ satisfying 
\begin{equation}\label{eq:det}
M 
\begin{bmatrix}
	t_0 \\ t_1 \\ t_2 
\end{bmatrix}
= 
L 
\begin{bmatrix}
	x_0 \\ x_1 \\ x_2 \\ x_3
\end{bmatrix}, 
\end{equation}
then $M$ is a determinantal representation. 
Conversely, 
let $M$ be a determinantal representation of $X$, 
and $L$ the matrix satisfying (\ref{eq:det}). 
The rank of $L$ in $P = {}^t \! (a_0,a_1,a_2)$ is $2$ if and only if 
the $3$ planes in $\mathbb{P}^3$ determined by $M \, {}^t \! (a_0,a_1,a_2) = 0$ intersects in a line. 
There are such $6$ points in $\mathbb{P}^2$ and 
the corresponding lines form a sixer. 
We call it \textit{the sixer corresponding to $M$}. 

\begin{lem}\label{det}
	Every determinantal representation $M$ of $X$ is equivalent to the following form: 
	\begin{equation}\label{eq:mat_CS}
		M_0 = 
		\begin{bmatrix}
			0 & -\pi'_2 & \pi_3 \\
			\pi_1 & 0 & -\pi'_3 \\
			-\pi'_1 & \pi_2 & 0 
		\end{bmatrix}, 
	\end{equation}
	where $\pi_i, \pi'_i$ $(i=1,2,3)$ are linear forms. 
\end{lem}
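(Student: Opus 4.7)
The plan is to exploit the bijection between equivalence classes of determinantal representations of $X$ and sixers of $X$ provided by the cited theorem. If $M$ corresponds to a sixer $S$, it will suffice to exhibit a determinantal representation of the form $M_0$ displayed in the statement whose associated sixer is also $S$; equivalence of $M$ and $M_0$ then follows automatically from that bijection. In particular I do not need to manipulate $M$ itself by row and column operations; I only need to build some $M_0$ representative in the desired equivalence class.

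Starting from $S$, I would use the double-six containing $S$ to single out a suitable conjugate pair of triads of tritangent trios whose nine lines lie outside the double-six (using the combinatorics of tritangent planes relative to a sixer recalled in the previous section). Denoting the defining linear forms of the six tritangent planes by $\pi_1,\pi_2,\pi_3$ and $\pi'_1,\pi'_2,\pi'_3$, the two cubics $\pi_1\pi_2\pi_3$ and $\pi'_1\pi'_2\pi'_3$ both vanish on the same nine lines $\pi_i\cap\pi'_j$. A dimension count on the space of cubics vanishing on these nine lines, together with nonsingularity of $X$, forces a Cayley--Salmon identity
\[
\pi_1\pi_2\pi_3-\pi'_1\pi'_2\pi'_3=\lambda f
\]
after suitable rescaling of the linear forms. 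Substituting into $M_0$ and expanding the determinant along the first row gives at once $\det M_0=\pi_1\pi_2\pi_3-\pi'_1\pi'_2\pi'_3=\lambda f$, so $M_0$ is a genuine determinantal representation of $X$.

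It then remains to verify that the sixer associated to $M_0$ under the correspondence of the preceding theorem coincides with $S$. For this I would unpack the construction via the auxiliary matrix $L$ from equation (\ref{eq:det}): the rank-drop locus of $L$ produces six points in $\mathbb{P}^2$, and the corresponding lines in $\mathbb{P}^3$, read off from the kernel of $M_0$ at those points, will be identifiable with the six lines of $S$ thanks to the anti-diagonal pattern of zeros in $M_0$. Up to the freedom of permuting/rescaling the $\pi_i$ and $\pi'_i$ (which amounts to further row and column operations), one can arrange this identification to land on the specified sixer.

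The main obstacle is the Cayley--Salmon identity in the second step: one has to know that a conjugate pair of triads of tritangent trios actually exists on $X$ in a form compatible with the chosen sixer, and that the nine lines impose enough independent conditions on cubics in a characteristic-free way. This is essentially the content of the later Lemma~\ref{CS:tri}, so if it can be invoked (or its proof organized before this lemma), the remaining verifications are routine linear algebra.
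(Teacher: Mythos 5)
Your strategy is genuinely different from the paper's, and as you have organized it, it is circular. You reduce the lemma to exhibiting \emph{some} anti-diagonal representative in the equivalence class of $M$, and the key input you need is the Cayley--Salmon identity $\pi_1\pi_2\pi_3-\pi'_1\pi'_2\pi'_3=\lambda f$ for a conjugate pair of triads of tritangent trios adapted to the sixer of $M$. You yourself identify this as ``essentially the content of Lemma~\ref{CS:tri}'' --- but the paper proves Lemma~\ref{CS:tri} \emph{using} Lemma~\ref{det}: its proof takes the determinantal representation attached to a sixer extracted from the conjugate pair and invokes Lemma~\ref{det} to put it in anti-diagonal form before reading off the identity. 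So you cannot invoke Lemma~\ref{CS:tri}, and ``organizing its proof before this lemma'' is impossible with the proof the paper actually gives. To close the gap you would need an independent, characteristic-free argument that the space of cubics through the nine lines $\pi_i\cap\pi'_j$ is exactly two-dimensional (so that $f$, $\pi_1\pi_2\pi_3$ and $\pi'_1\pi'_2\pi'_3$ are forced into a pencil); you gesture at this dimension count but do not carry it out, and it is precisely the computation the paper's route is designed to avoid. There is also a combinatorial slip: the nine lines of the relevant conjugate pair do \emph{not} lie outside the double-six containing $S$ --- six of them lie inside it and form a half of double-six, three from each sixer, and it is exactly this overlap that would let you match the sixer of $M_0$ with $S$ in your final step.

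For contrast, the paper's proof is a two-step direct manipulation: writing $P_1,\dots,P_6$ for the points of $\mathbb{P}^2$ giving the sixer corresponding to $M$, one performs the column operation $M\mapsto M\,[P_1\ P_2\ P_3]$; then for each coordinate point of $\mathbb{P}^2$ the three planes cut out by the corresponding column of the new matrix meet in a line, hence satisfy a nontrivial linear relation, and the resulting row operations kill the three diagonal entries. This uses nothing beyond the correspondence between determinantal representations and sixers already set up, and in particular no Cayley--Salmon identity. Your architecture is viable in principle, but only after an independent proof of the Cayley--Salmon identity; absent that, the direct argument is both shorter and logically prior.
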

\begin{proof}
	Let $P_i = {}^t \! (a_{0i}, a_{1i},a_{2i})$ be points in $\mathbb{P}^2$ which give the sixer corresponding to $M$, 
	and $M' := M \, [ P_1 \ P_2 \ P_3 ]$, where $P_i$ are regarded as column vectors. 
	Since the $3$ planes $m'_{11}, m'_{21}, m'_{31}$ given by $M' \, {}^t \!(1,0,0) = 0$ share a line, 
	there exists a non-trivial equation $\alpha m'_{11} + \beta m'_{21} + \gamma m'_{31} = 0$. 
	Applying an invertible matrix whose first row is $(\alpha \  \beta \ \gamma)$ to $M'$ from left, 
	we have a matrix whose $(1,1)$-entry is zero. 
	In the same way to the points $(0,1,0)$ and $(0,0,1)$, 
	we get an equivalent determinantal representation which has zeros along diagonal $M_0$. 
	(See the proof of \cite[Proposition 3.3]{Buc07}.)
\end{proof}

\begin{rmk}
	We note that 
	$3$ skew lines of the sixer corresponding to the determinantal representation (\ref{eq:mat_CS}) 
	are on the points $(1,0,0)$, $(0,1,0)$, $(0,0,1)$ in $\mathbb{P}^2$. 
	The set of $3$ skew lines together with the one of the determinantal representation ${}^t \! M$ forms a half of double-six. 
	Because a half of double-six extends to the double-six uniquely, 
	the pair of sixeres corresponding to $M$ and ${}^t \! M$ is a double-six. 
\end{rmk}

The equation of $X$ given by the determinant of the matrix (\ref{eq:mat_CS}) is called a \textit{Cayley--Salmon equation}: 
\begin{equation}\label{eq:CS}
		\pi_1\pi_2\pi_3 - \pi'_1\pi'_2\pi'_3 = 0. 
\end{equation}
Giving a Cayley--Salmon equation $\det M_0 = 0$ of $X$, 
we have $9$ mutually distinct lines $\ell_{ij}$ defined by $\pi_{i} = \pi'_j = 0$. 
In fact, if $\ell_{i1} = \ell_{i'2}$, 
then the point $\pi_i = \pi_1' = \pi_j = 0 \ (j \neq i,i')$ is a singular point on $X$. 
So the planes $\pi_i = 0$ and $\pi'_i = 0$ ($i=1,2,3$) are tritangent planes of $X$, 
and the pair $(\{\pi_1,\pi_2,\pi_3\}, \{\pi'_1,\pi'_2, \pi'_3\})$ is a conjugate pair of triads of tritangent trios. 
Namely, a Cayley--Salmon equation (\ref{eq:CS}) of $X$ induces a conjugate pair of triads of tritangent trios.

We note that 
the conjugate pair of triad of tritangent trios given by the Cayley--Salmon equation $\det M_0 = 0$ 
contains the $6$ lines which form the half of double-six given by $M_0, {}^t \! M_0$. 
Conversely, for a conjugate pair of triad of tritangent trios, we can take $6$ half of double-six. 

\begin{lem}\label{CS:tri}
	Let $(\{\pi_1,\pi_2,\pi_3\},\{\pi_1',\pi_2',\pi_3'\})$ be a conjugate pair of triads of tritangent trios of $X$, 
	then $X$ is defined by the following: 
	\[ 
	\pi_1 \pi_2 \pi_3 + \lambda \pi_1' \pi_2' \pi_3' = 0, 
	\]
	for some $\lambda \in k\setminus \{0\}$. 
	Hence there is a one-to-one correspondence between 
	Cayley--Salmon equations of $X$ and conjugate pairs of triads of tritangent trios of $X$. 
	There are $120$ essentially distinct Cayley--Salmon equation of $X$. 
\end{lem}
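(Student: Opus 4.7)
The plan is to compare $g := \pi_1\pi_2\pi_3$ and $h := \pi_1'\pi_2'\pi_3'$ as cubic forms on $\mathbb{P}^3$ by first examining their restrictions to $X$, and then lifting any proportionality back to the ambient ring. Since each $\{\pi_i = 0\}$ (respectively $\{\pi_j' = 0\}$) is a tritangent plane, it meets $X$ in exactly the three lines of the corresponding tritangent trio. The conjugate-pair hypothesis says both triads consist of the same nine lines $\ell_{ij}$, so $g|_X$ and $h|_X$, viewed as global sections of $\mathcal{O}_X(3)$, have the common zero divisor $\sum_{i,j}\ell_{ij}$.

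Next I would note that $g|_X$ and $h|_X$ are both nonzero: the cubic form $f$ defining $X$ is irreducible of degree $3$, so it cannot divide a product of three linear forms. Because $X$ is projective and integral, two nonzero sections of a line bundle with the same zero divisor are proportional by a nonzero scalar, hence $g|_X = \mu\, h|_X$ for some $\mu \in k^\times$. Lifting, $g - \mu h$ lies in the ideal $(f)$, and comparing degrees yields $g - \mu h = c f$ for some $c \in k$. If $c$ were zero we would have $g = \mu h$ as polynomials in $k[x_0,\ldots,x_3]$, and unique factorization would identify the two triads up to scalars, contradicting the requirement that the triads share no tritangent plane. Therefore $c \neq 0$, and after rescaling $f$ we obtain the desired equation $\pi_1\pi_2\pi_3 + \lambda \pi_1'\pi_2'\pi_3' = 0$ with $\lambda \in k^\times$.

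For the bijection and the count: the discussion preceding the lemma already constructs a conjugate pair from any Cayley--Salmon equation, and the argument above provides the reverse direction. This correspondence is essentially injective because the two cubic factors of a Cayley--Salmon equation are determined up to scalar by unique factorization, so any two equations producing the same conjugate pair differ only by rescaling the $\pi_i$ and $\pi_j'$ (which is absorbed into $\lambda$). Combined with the enumeration of $120$ conjugate pairs of triads of tritangent trios carried out after the definition, this yields $120$ essentially distinct Cayley--Salmon equations. The only mildly delicate points in the proof are the verifications that $h|_X \neq 0$ and that $c \neq 0$, but both follow cleanly from the irreducibility of $f$ and from the hypothesis that no line is shared between the two triads, so the argument presents no serious obstacle.
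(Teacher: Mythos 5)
Your argument is correct, but it takes a genuinely different route from the paper. The paper proves this lemma via determinantal representations: it extends a half of double-six extracted from the conjugate pair to a sixer, takes the corresponding determinantal representation $M$, normalizes it by Lemma \ref{det} to have zeros on the diagonal, and then identifies each off-diagonal entry of $M$ with one of the planes $\pi_i,\pi_j'$ by checking which pair of lines each entry must contain; the Cayley--Salmon equation is then $\det M = 0$. You instead argue directly with divisors: $\pi_1\pi_2\pi_3$ and $\pi_1'\pi_2'\pi_3'$ restrict to nonzero sections of $\mathcal{O}_X(3)$ with the same zero divisor $\sum_{i,j}\ell_{ij}$, hence are proportional on $X$; lifting via $I(X)=(f)$ and comparing degrees gives $\pi_1\pi_2\pi_3 - \mu\,\pi_1'\pi_2'\pi_3' = cf$, and $c\neq 0$ because otherwise unique factorization would force the two triads to consist of the same planes. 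Both arguments are characteristic-free and both rest on the irreducibility of $f$ at the key step. Your proof is more elementary and self-contained---it needs neither Lemma \ref{det} nor the Buckley--Ko\v{s}ir correspondence---while the paper's proof has the side benefit of tying the Cayley--Salmon equation explicitly to a determinantal representation and to the half of double-six it determines, which the paper exploits in the surrounding remarks. One small point worth making explicit in your write-up: the fact that the two triads share no tritangent plane is not literally part of the definition but follows because each trio $\pi_i$ meets each trio $\pi_j'$ in exactly one of the nine lines, so no $\pi_i$ can coincide with any $\pi_j'$.
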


\begin{proof}
	We write $\ell_{ij}$ for the line defined by $\pi_i = \pi'_j = 0$. 
	We take the $6$ lines which form a half of double-six 
	from the conjugate pair of triads of tritangent trios: 
	\begin{equation}\label{eq:ex_half}
	\begin{pmatrix}
		\ell_{11} & \ell_{22} & \ell_{33} \\
		\ell_{32} & \ell_{13} & \ell_{21} 
	\end{pmatrix}.
	\end{equation}
	We take the determinantal representation $M= (m_{ij})_{ij}$ 
	corresponding to the sixer of the first row of the extended double-six of (\ref{eq:ex_half}). 
	From Lemma \ref{det}, by exchanging $M$ with an equivalent one, we can assume that $M$ has zeros along the diagonal. 
	Since the half of double-six associated to $M, {}^t \! M$ is (\ref{eq:ex_half}), 
	the $2$ planes of each column and row intersect in the following line: 
	\[\begin{split}
	&\left[
	\begin{array}{ccc}
		0 & m_{12} & m_{13} \\
		m_{21} & 0 & m_{23} \\
		m_{31} & m_{32} & 0 \\	
	\end{array} \right] 
	\begin{array}{cc}
		\rightharpoonup & \ell_{32} \\
		\rightharpoonup & \ell_{13}  \\
		\rightharpoonup & \ell_{21}  \\
	\end{array} \\
	 & \qquad  \begin{array}{ccc}
	\downharpoonleft & \downharpoonleft &\downharpoonleft 	\\
	\ell_{11} & \ell_{22} & \ell_{33} \\
	\end{array}
\end{split} \]
	The plane defined by $m_{21} = 0$ contains the $2$ lines $\ell_{11}$ and  $\ell_{13}$, 
	hence it coincides with the plane $\pi_1 = 0$. 
	In the same way, 
	we see that 
	$m_{31} = \pi_1'$,  
	$m_{32} = \pi_2$,  
	$m_{12} = \pi_2'$, 
	$m_{13} = \pi_3$, and
	$m_{23} = \pi_3'$
	as planes. 
	So the determinant of $M$ which defines $X$ is the Cayley--Salmon equation 
	$\pi_1 \pi_2 \pi_3 + \lambda \pi_1' \pi_2' \pi_3' = 0$ for some $\lambda \neq 0$. 
\end{proof}

\section{The octanomial normal form}\label{sec:octa}
 
It is well-known that  in characteristic $\neq 2,3$ a general cubic surface is projectively isomorphic to a surface in $\mathbb{P}^4$ given by the equation: 
\begin{equation}\label{eq:sylvester}
	\sum_{i=0}^4 a_ix_i^3 = \sum_{i=0}^4 x_i = 0, 
\end{equation}
where $a_0,\dots ,a_4$ are parameters \cite[Corollary 9.4.2]{Dol12}. 
The parameters are uniquely determined up to permuting and scaling. 
The equation (\ref{eq:sylvester}) is called the \textit{Sylvester normal form}. 
We note that in characteristic $2$ or $3$ the Sylvester form is \textit{not} a normal form. 
In fact, all nonsingular surfaces defined by (\ref{eq:sylvester}) 
are projectively isomorphic to the Fermat cubic surface in characteristic $2$ \cite[\S 6]{DD19},
and are non-reduced in characteristic $3$.  
Even in characteristic $0$, 
there exist cubic surfaces which cannot be defined by Sylvester forms. 
In fact, 
since the automorphism group of a cubic surface defined by a Sylvester form is a subgroup of $\mathfrak{S}_5$, 
the Fermat cubic surface 
whose automorphism group is $(\mathbb{Z}/3\mathbb{Z})^3 \rtimes \mathfrak{S}_4$ 
is defined by no Sylvester form. 

Other normal forms are in Table \ref{tab:normal forms}, 
where the heading `Char' gives the conditions of characteristic and 
`Surface' gives the surfaces defined by the normal forms in a suitable coordinates. 
Here, $a_i, a_{ijk}$ are parameters and $\ell$ is a linear form in $x_0,\dots ,x_3$. 

\begin{table}[hb]
	\caption{Normal forms of cubic surfaces}\label{tab:normal forms}
	\begin{tabular}{llcc}
		\toprule
		Name & 
		Normal form & 
		Char & 
		Surface \\
		\midrule
		Sylvester \cite[Cor 9.4.2]{Dol12}&
		$\sum_{i=0}^4 a_ix_i^3 = \sum_{i=0}^4 x_i = 0 \ (\subset \mathbb{P}^4)$ &
		$\neq 2,3$ & 
		general \\
		Emch \cite[Thm 6.3]{DD19}\footnotemark[1] &
		$\sum_{i=0}^3 x_i^3 + \sum_{0\leq i < j < k\leq 3} a_{ijk} x_ix_jx_k = 0$ &
		\multicolumn{1}{|>{\columncolor[gray]{0.7}}c|}{any} & 
		general
		 \\
		 Dolgachev \cite[Cor 9.3.4]{Dol12}\footnotemark[2] & 
		 $x_0x_1x_2 + x_3(x_0 + x_1 + x_2 + x_3) \ell = 0$ &
		 $0$ & 
		 \multicolumn{1}{|>{\columncolor[gray]{0.7}}c|}{any} \\
		  Cremona  \cite[Thm 9.4.6]{Dol12}&
		  $\sum_{i=0}^5 x_i^3 = \sum_{i=0}^5 x_i = \sum_{i=0}^5 a_ix_i = 0 \ (\subset \mathbb{P}^5)$ &
		  $0$ &
		  \multicolumn{1}{|>{\columncolor[gray]{0.7}}c|}{any} \\
		  \bottomrule
	\end{tabular}
\end{table}

\footnotetext[1]{
	Dolgachev and Duncan conjectured that in characteristic $2$
	every cubic surface is written in an Emch normal form \cite[Remark 6.5]{DD19}.}
\footnotetext[2]{
	In characteristic $2$, 
	the Fermat cubic surface is not defined by the Dolgachev normal form. 
	In fact, any surface defined this normal form has a \textit{canonical point} (See \cite[Proposition 4.4]{DD19}). 
}

M. Panizzut, E-C. Sert\"{o}z, and B. Sturmfels discovered a new normal form called the \textit{octanomial form} \cite{Pan20}. 
They showed that every (not necessary nonsingular) cubic surface given by blowing up $6$ points in $\mathbb{P}^2$ was projectively isomorphic to a surface defined by the equations: 
\begin{equation}\label{eq:octa2}
	 a_0x_1x_2x_3 + a_1x_0x_2x_3 + a_2x_0x_1x_3 + a_3x_0x_1x_2 
	 + a_4x_0^2x_1 + a_5x_0x_1^2 + a_6x_2^2x_3 + a_7x_2x_3^2 = 0, 
\end{equation} 
where $a_0,\dots ,a_7$ are parameters. 
They presented this normal form as a suitable one for $p$-adic geometry, 
as it revealed the intrinsic del Pezzo combinatorics of the $27$ trees in the tropicalization. 
Their methods to introduce and to investigate this normal form were heavily computational. 

In this section, 
we will show that  in \textit{any} characteristic \textit{every nonsingular} cubic surface (which is given by blowing up some $6$ points in $\mathbb{P}^2$ \textit{in general position}) is projectively isomorphic to a surface defined by an octanomial form (\ref{eq:octa2}) with $a_4 = a_5 = a_6 = a_7 = 1$ (Theorem \ref{octa}). 
Our method to prove this is more conceptual and computation-free. 

\begin{proof}[Proof of Theorem \ref{octa}]
	Let $X$ be a cubic surface. 
	It follows from Lemma \ref{CS:tri} that $X$ is defined by the equation 
	$f=\pi_1\pi_2\pi_3 + \pi'_1\pi'_2\pi'_3$, 
	where $\{\pi_1,\pi_2,\pi_3\},\{\pi'_1,\pi'_2,\pi'_3\}$ is a conjugate pair of triads of tritangent trios of $X$. 
	We claim that the $4$ planes $\pi_1, \pi_2, \pi'_1, \pi'_2$ are linearly independent. 
	In fact, if there exist $a,b,c \in k$ such that $\pi'_2 = a \pi_1 + b \pi_2 + c \pi'_1$, 
	the $2$ lines $\ell_{21}, \ell_{12}$ intersect at the point $\pi_1=\pi_2=\pi'_1=0$, 
	which is a contradiction. 
	
	By applying the coordinate change 
	$(\pi_1,\pi_2,\pi_1',\pi_2') \mapsto (x_0,x_1,x_2,x_3)$, 
	we have 
	\[ f= x_0x_1(a_0x_0 + a_1x_1 + a_2x_2 + a_3x_3) + x_2x_3(b_0x_0+b_1x_1+b_2x_2+b_3x_3). \]
	Moreover, if $b_3=0$, the point $(0,0,0,1)$ is a singular point of $X$, 
	hence $b_3\neq 0$. 
	In the same way, we see that $a_0,a_1,b_2 \neq 0$.  
	Then by scaling $a_0^{-1}, a_1^{-1}, b_2^{-1}, b_3^{-1}$ with $x_0,x_1,x_2,x_3$ respectively, 
	we have the equation 
	\[ f=\frac{1}{a_0a_1} x_0x_1 \left(x_0+x_1+\frac{a_2}{b_2}x_2 + \frac{a_3}{b_3}x_3 \right)
	+ \frac{1}{b_2b_3} x_2x_3 \left( \frac{b_0}{a_0}x_0 + \frac{b_1}{a_1}x_1 + x_2 + x_3 \right). \]
	Finally scaling $(a_0a_1)^{1/3}$ with $x_0, x_1$, and $(b_2b_3)^{1/3}$ with $x_2, x_3$ respectively, 
	we have the octanomial form (\ref{eq:octa}) of $X$. 
\end{proof}

\begin{rmk}
	In the original octanomial normal form (\ref{eq:octa2}), 
	the parameters $a_4,\dots ,a_7$ are not zero if and only if 
	the surface is nonsingular \cite[Proposition 2.1]{Pan20}. 
 \end{rmk}

\begin{rmk}\label{octa_para}
	Thanks to Theorem \ref{octa}, 
	there is the surjection $\Phi\colon \mathbb{A}_k^4 \to \mathcal{M}(k)$, 
	where $\mathbb{A}_k^4$ is the octanomial parameter space. 
	We compute the degree of this. 
	To be precise, we show that
	the number of octanomial parameters of a cubic surface $X$ is 
	$25920/ |\Aut (X)|$ in $p\neq 3$, $8640/ |\Aut (X)|$ in $p=3$. 
	
	Referring to the proof of Theorem \ref{octa}, 
	we can compute this as follows. 
	A cubic surface $X$ has $120$ Cayley--Salmon equations (Lemma \ref{CS:tri}) 
	corresponding to conjugate pairs of triads of tritangent trios $(\{\pi_1,\pi_2,\pi_3\}, \{\pi_1',\pi_2',\pi_3'\})$, 
	and $72 = 3!\times 3!\times 2$ ways 
	to choose $4 = 2 + 2$ tritangent planes $\pi_i, \pi_j, \pi'_{i'}, \pi'_{j'}$ 
	which are changed to $x_0,x_1,x_2,x_3$ respectively. 
	There are $3$ scaling ways in $p\neq 3$. 
	If $X$ does not have a non-trivial automorphism, 
	no redundancy occur, 
	and the number of octanomial parameters is 
	$25920 = 120 \times 72 \times 3$ in $p\neq 3$, $8640 = 120 \times 72$ in $p=3$. 
	
	We assume that $X$ does not admit an automorphism of type $3C$. 
	Let $g$ be a non-trivial automorphism.  
	By the orbits of the $27$ lines  of each type we will show in the next chapter, 
	there is no conjugate pair of triads of tritangent trios which consists of $g$-invariant lines. 
	The octanomial parameters given by 
	$(\pi_1,\pi_2,\pi'_1,\pi'_2) \mapsto (x_0,x_1,x_2,x_3)$ and 
	$(g\pi_1, g\pi_2, g\pi_1', g\pi_2') \mapsto (x_0',x_1',x_2',x_3')$ are the same 
	because the representation of $g$ with respect to the coordinates $(x_0,\dots ,x_3)$ and $(x_0',\dots ,x_3')$ 
	is the identity. 
	As a result, the $25920$ (resp. $8640$ in $p=3$) octanomial parameters are duplicated $|\Aut (X)|$ times. 
	
	An automorphism of type $3C$ has a conjugate pair of triad of tritangent trios consisting of invariant lines. 
	In the case, the cubic surface is isomorphic to the Fermat cubic surface, 
	and we can directly compute all octanomial parameters (Remark \ref{octa_para_3C}). 
\end{rmk}

\section{The octanomial normal forms of the strata}\label{sec:main}

In this section, 
we will give the octanomial normal forms of the strata and automorphisms of the types, 
in descending order of dimension (Theorem \ref{main thm}). 
As we see in the above, there are many octanomial parameters of a cubic surface up to projective equivalence. 
We give one of these octanomial parameters of each stratum which preserve most specialization of parameters. 
We construct them 
by recalling the arrangement of Eckardt points given by \cite{DD19} 
or making coordinate change directly. 

\subsection{The stratum $2A$}\label{subsec:2A}

The stratum $2A$ is of dimension $3$ in $\mathcal{M}(k)$. 
An automorphism of type $2A$ is the most fundamental 
because any cubic surface with nontrivial automorphism group has ones of type $2A$,  
and because automorphisms of most type are realized by products of some ones of type $2A$. 
It is known that an automorphism of order $2$ is of type $2A$ if and only if 
its action on $\mathbb{P}^3$ is given by  
\[ g= \begin{bmatrix}
	0 & 1 & 0 & 0 \\
	1 & 0 & 0 & 0 \\
	0 & 0 & 1 & 0 \\
	0 & 0 & 0 & 1 
\end{bmatrix},   \]
in a suitable coordinates. 
The locus of fixed points of this action consists of 
a hyperplane $A = \kernel (g-\id)$ 
and a unique point $q_0 = \image (g-\id)$, 
which are called the \textit{axis} and the \textit{center} of $g$, respectively. 
The center is not on the axis in $p\neq 2$, 
but always on it in $p=2$. 

Let $g$ be an automorphism of a cubic surface of type $2A$. 
The $g$-orbits of the $27$ lines are the following: 
\[ (E_6), (F_{56}), (G_5), (G_i, F_{i6}), (E_5, G_6), (E_i, F_{i5}), (F_{ij}, F_{kl}),  \]
where $\{i,j,k,l\} = \{1,2,3,4\}$. 
The $3$ invariant lines form a tritangent trio. 
The remaining $24$ lines are partitioned into $12$ pairs 
where each pair together with one of the invariant lines form a tritangent trio. 
See \cite[Proposition 9.1]{DD19} for details. 

When the $3$ lines which form a tritangent trio of a cubic surface intersect at a point, 
we call the point an \textit{Eckardt point}. 
In Subsection \ref{subsec:3C}, 
we will see the arrangement of Eckardt points of 
the Fermat cubic surface.

The $3$ invariant lines of an automorphism of type $2A$ intersect at a point, which is an Eckardt point. 

\begin{prp}[{\cite[Theorem 9.2]{DD19}}]\label{DD19:9.2}
	Let $X$ be a cubic surface. 
	The center of an  automorphism of $X$ of type $2A$ is an Eckardt point on $X$. 
	Conversely, any Eckardt point on $X$ is the center of an automorphism of $X$ of type $2A$. 
\end{prp}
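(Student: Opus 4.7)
My plan is to treat the two implications separately, using the orbit structure of the $27$ lines recalled above for the forward direction and a direct coordinate construction for the converse. Throughout, I exploit the fact already recorded: an automorphism $g$ of type $2A$ has exactly three invariant lines $\ell_1,\ell_2,\ell_3$ on $X$, these form a tritangent trio, and they concur at an Eckardt point $e$.

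For the forward direction, fix such a $g$ with center $q_0$ and axis $A$. It suffices to show $e=q_0$. The involution $g|_{\ell_i}$ is either the identity (in which case $\ell_i\subset A$) or has exactly two fixed points on $\ell_i$, one lying in $A$ and the other equal to $q_0$; in the latter case $\ell_i$ passes through $q_0$. I would next rule out the possibility that all three $\ell_i$ lie in $A$, since otherwise $X\cap A$ would be the union of three coplanar concurrent lines, contradicting the description of the fixed cubic curve $X\cap A$ of a $2A$ involution. Hence at least one $\ell_i$ passes through $q_0$; in characteristic $\neq 2$, where $q_0\notin A$, concurrency of the three coplanar lines then forces all three to pass through $q_0$, giving $e=q_0$.

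For the converse, let $e\in X$ be an Eckardt point and $\ell_1,\ell_2,\ell_3$ the three lines through $e$ lying in a tritangent plane $\pi$. I would choose coordinates with $e=[1{:}0{:}0{:}0]$ and $\pi=\{x_3=0\}$. Because $X\cap\pi$ is the union of three lines all through $e$, the restriction of the defining cubic $f$ to $\pi$ involves no $x_0$, so
\[
f \;=\; x_3\,q(x_0,x_1,x_2,x_3) + h(x_1,x_2),
\]
with $h$ a cubic form in $x_1,x_2$ alone. Nonsingularity of $X$ at $e$ forces the coefficient of $x_0^2$ in $q$ to be nonzero, so after rescaling $q(e)=1$. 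In characteristic $\neq 2$, completing the square in $x_0$ allows one to assume $q=x_0^2+Q(x_1,x_2,x_3)$, and then the involution $\sigma\colon x_0\mapsto -x_0$ visibly preserves $f$; its $(-1)$-eigenspace is one-dimensional and spanned by $e$, so $\sigma$ is of type $2A$ with center $e$.

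The main obstacle is characteristic $2$, which breaks both the diagonalization on the forward side and the square-completion on the converse side. My plan there is to replace $\sigma$ by a transvection of the form $x_0\mapsto x_0+\ell(x_1,x_2,x_3)$ on the converse side, where $\ell$ is read off from the linear-in-$x_0$ part of $q$ and $\sigma$-invariance of $f$ is forced by $2=0$ killing the cross term; and on the forward side, where $q_0\in A$, to rerun the fixed-point analysis using the Jordan form of $g$ rather than its diagonalization, so that each invariant $\ell_i$ is either pointwise fixed or meets $A$ in the single point $q_0$ with unipotent action. I expect this bookkeeping to be routine but to be the principal technical point requiring care.
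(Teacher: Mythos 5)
First, a point of comparison: the paper does not prove this Proposition at all --- it is quoted directly from \cite[Theorem 9.2]{DD19} --- so you are supplying an argument where the paper supplies a citation. Your converse direction is essentially sound: writing $f = x_3q + h(x_1,x_2)$ at an Eckardt point, normalizing $q(e)=1$, and using $x_0\mapsto -x_0$ in characteristic $\neq 2$ or the transvection $x_0\mapsto x_0+\ell$ in characteristic $2$ does produce the desired involution with center $e$. You should add that $\ell\neq 0$ in characteristic $2$ (if $\ell=0$ then $\partial f/\partial x_0\equiv 0$, and $f=\partial_1f=\partial_2f=\partial_3f=0$ reduces by Euler's relation to three quadrics in $\mathbb{P}^3$, which always meet, so $X$ would be singular), and that a rank-one unipotent involution is of type $2A$ rather than $2B$ by the matrix characterization recalled in Subsection 5.1.

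The genuine gap is in the forward direction in characteristic $2$. You propose to rule out the case that all three $g$-invariant lines lie in the axis $A$ by saying that $X\cap A$ cannot be three concurrent lines. In characteristic $2$ this is false: for the surface $x_0x_1(x_0+x_1)+x_2x_3(x_2+x_3)=0$ (the $3C$-octanomial form of the Fermat surface) and the type-$2A$ involution swapping $x_0$ and $x_1$, the axis is $x_0+x_1=0$ and $X\cap A=\{x_2x_3(x_2+x_3)=0\}$ is precisely three pointwise-fixed concurrent lines --- all passing through the center $[1{:}1{:}0{:}0]$. So this configuration genuinely occurs and must be handled, not excluded; the correct mechanism is that for $g\colon x_0\mapsto x_0+x_1$ invariance forces $f=x_0(x_0+x_1)b_1+d_3(x_1,x_2,x_3)$, and when the trio lies in $A=\{x_1=0\}$ the restriction $f|_A$ is a binary cubic in $x_2,x_3$, whose three lines automatically concur at the center. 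Two smaller points in characteristic $\neq 2$: your step ``concurrency forces all three lines through $q_0$'' tacitly uses that at most one of three distinct coplanar lines can lie in the hyperplane $A$ (two would both equal $A\cap\pi$); and there is a shortcut that proves concurrency at $q_0$ in one stroke --- invariance gives $f=x_0^2L(x_1,x_2,x_3)+C(x_1,x_2,x_3)$, and restricting to the plane $L=0$ yields a binary cubic, i.e.\ three lines through $q_0=[1{:}0{:}0{:}0]$, which must be the three invariant lines.
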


\begin{thm}[The $2A$-octanomial normal form]\label{2A-octa}
	Every cubic surface admitting an automorphism of type $2A$ is 
	projectively isomorphic to the surface given by the octanomial form with $a_0 = a_1$: 
	\begin{equation}\label{eq:2A-octa}
		x_0x_1(x_0+x_1+ a_3x_2 + a_2x_3 ) + x_2x_3(a_0x_1 + a_0x_0 + x_2+x_3) = 0.
	\end{equation}
	The surfaces given by the above have the automorphism of type $2A$: 
	\begin{equation}\label{eq:2A-auto}
	\begin{bmatrix}
		0 & 1 & 0 & 0 \\
		1 & 0 & 0 & 0 \\
		0 & 0 & 1 & 0 \\
		0 & 0 & 0 & 1\\
	\end{bmatrix}. 
	\end{equation}
	The corresponding Eckardt point is $(1,-1,0,0)$ 
	on the tritangent plane $x_0 + x_1 + a_3x_2 + a_2x_3 = 0$, 
	and the axis is $x_0 - x_1 = 0$. 
\end{thm}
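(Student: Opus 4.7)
The plan is to write $X$ in a Cayley--Salmon form (Lemma \ref{CS:tri}) whose six planes are permuted by the given type-$2A$ automorphism $g$ in a way compatible with the coordinate change used in the proof of Theorem \ref{octa}. In that coordinate change one sends $(\pi_1,\pi_2,\pi'_1,\pi'_2)\mapsto(x_0,x_1,x_2,x_3)$, so to make $g$ act as the swap matrix (\ref{eq:2A-auto}) we need to arrange that $g$ interchanges the \emph{linear forms} $\pi_1,\pi_2$ and fixes each of $\pi'_1,\pi'_2$, while the remaining two Cayley--Salmon forms $\pi_3,\pi'_3$ are also $g$-fixed. Once this is achieved, invariance of $\pi_3$ and $\pi'_3$ under $x_0\leftrightarrow x_1$ forces the coefficients of $x_0$ and $x_1$ in each to coincide, and the scalings in the proof of Theorem \ref{octa} then normalize the forms to $\pi_3=x_0+x_1+a_3x_2+a_2x_3$ and $\pi'_3=a_0(x_0+x_1)+x_2+x_3$, which is precisely (\ref{eq:2A-octa}).

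Concretely, fix a marking so that the orbits of $g$ on the $27$ lines are as listed before Proposition \ref{DD19:9.2}; the three invariant lines $E_6,G_5,F_{56}$ then form the invariant tritangent trio whose common point is the center of $g$ (Proposition \ref{DD19:9.2}). I would take the type-$1$ conjugate pair of triads with $(i,j,k)=(6,5,1)$,
\[
\begin{matrix}
E_6 & G_5 & F_{56}\\
G_1 & F_{15} & E_5\\
F_{16} & E_1 & G_6
\end{matrix},
\]
for which the orbit list immediately shows that $g$ fixes Row~$1$ pointwise, interchanges Rows~$2$ and~$3$, and sends each column to itself (interchanging two of its lines and fixing the third). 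Setting $\pi_3$ to be the plane of Row~$1$, $\pi_1,\pi_2$ the planes of Rows~$2,3$, and $\pi'_1,\pi'_2,\pi'_3$ the planes of the three columns, each of $\pi_3,\pi'_1,\pi'_2,\pi'_3$ is $g$-invariant as a subset of $\mathbb{P}^3$, while $g$ interchanges $\pi_1$ with $\pi_2$.

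It remains to upgrade plane-invariance to linear-form-invariance, i.e.\ to check that $g^*$ acts as $+1$ (not $-1$) on the linear forms $\pi_3,\pi'_1,\pi'_2,\pi'_3$. In characteristic $\neq 2$ the $(-1)$-eigenspace of $g^*$ on linear forms is one-dimensional and spans the axis, which is the unique $g$-invariant hyperplane \emph{not} passing through the center; but each of our four planes contains one of the three $g$-fixed lines through the center and therefore passes through it, so each lies in the $(+1)$-eigenspace. In characteristic $2$ the operator $g^*$ is unipotent with only eigenvalue $+1$, so every $g$-invariant linear form is automatically fixed. After rescaling $\pi_2$ so that $g^*(\pi_1)=\pi_2$, the procedure of the proof of Theorem \ref{octa} puts $X$ in octanomial form while simultaneously turning $g$ into the swap (\ref{eq:2A-auto}), yielding $a_0=a_1$ as explained above; the Eckardt point $(1,-1,0,0)$ and axis $\{x_0=x_1\}$ are then read off directly, and the converse direction is immediate from the manifest symmetry of (\ref{eq:2A-octa}) under $x_0\leftrightarrow x_1$. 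The main obstacle I anticipate is the $(+1)$-vs-$(-1)$ eigenvalue verification in characteristic $\neq 2$: without it, $g$ would only be conjugate to a diagonal twist of the swap matrix, and the clean identity $a_0=a_1$ could fail in the chosen coordinates.
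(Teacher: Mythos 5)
Your proof is correct, but it derives the key identity $a_0=a_1$ by a genuinely different mechanism than the paper. The paper never tracks the automorphism $g$ through the coordinate change: it uses Proposition \ref{DD19:9.2} only to produce an Eckardt point, takes the corresponding tritangent plane as the plane $\pi_1$ of a Cayley--Salmon equation, runs the normalization of Theorem \ref{octa}, and then obtains $a_0=a_1$ from pure concurrency --- the two lines cut on $\pi_1$ by $x_2=0$ and $x_3=0$ meet at $(1,-1,0,0)$, and forcing $a_1x_0+a_0x_1+x_2+x_3$ to vanish there gives $a_0=a_1$; the swap matrix is afterwards exhibited on the normal form by inspection. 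You instead conjugate the given $g$ itself into the matrix (\ref{eq:2A-auto}), by selecting a $g$-adapted conjugate pair of triads from the orbit data and verifying that the four forms sent to coordinates lie in the $(+1)$-eigenspace of $g^*$. That eigenvalue verification is exactly the right point to worry about, and your argument for it is sound: each of the four planes contains one of the three invariant lines, which are concurrent at the center, and the axis is the unique invariant plane missing the center (in characteristic $2$ unipotency settles it). The one fact you lean on slightly more heavily than the paper states is the identification of the common point of the three invariant lines with the center of $g$; this is true (and is what Proposition \ref{DD19:9.2} is really recording), since in characteristic $\neq 2$ no line of $X$ lies in the axis and hence every invariant line passes through the center. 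What your route buys is a stronger, equivariant conclusion --- the originally given $g$ literally becomes the swap in the new coordinates --- which generalizes better to strata not governed by Eckardt points. What the paper's route buys is brevity: one concurrency computation replaces the orbit bookkeeping and the eigenspace analysis.
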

\begin{proof}
	We observe that the automorphism (\ref{eq:2A-auto}) of the surface (\ref{eq:2A-octa}) is of type $2A$. 
	
	Conversely, let $X$ be a surface which admits an automorphism $g$ of type $2A$. 
	It follows from Proposition \ref{DD19:9.2} that there exists an Eckardt point. 
	Let $\pi_1$ be the corresponding tritangent plane, 
	and we take a conjugate pair of triads of tritangent trios $(\{\pi_1,\pi_2,\pi_3\}, \{\pi'_1,\pi'_2, \pi'_3\})$ 
	which contains $\pi_1$. 
	Then $X$ is defined by the equation 
	$\pi_1\pi_2\pi_3 + \pi'_1\pi'_2\pi'_3 = 0$ by Lemma \ref{CS:tri}. 
	By coordinate change, 
	we have 
	$\pi_2 =x_0, \ 
	\pi_3 = x_1, \ 
	\pi'_2=x_2, \  
	\pi'_3 = x_3$ and 
	$\pi_1 = x_0 + x_1 + a_3x_2 + a_2x_3, \ 
	\pi'_1 = a_1x_0 + a_0x_1 + x_2 + x_3$ 
	(See the proof of Theorem \ref{octa}).
	Because the tritangent plane $\pi_1$ corresponds to an Eckardt point, 
	the $3$ lines on $\pi_1$ given by the intersection with the $3$ planes 
	$x_j = 0 \ (j=2,3)$ and $a_1x_0 + a_0x_1 + x_2 + x_3 = 0$ 
	intersect at a point, 
	so we have $a_0 = a_1$ and the point is $(1,-1,0,0)$. 
\end{proof}

\subsection{The stratum $2B$}
The stratum $2B$ is of dimension $2$ in $\mathcal{M}(k)$. 
The action of an automorphism of type $2B$ on $\mathbb{P}^3$ is given by 
\[ \begin{bmatrix}
	0 & 1 & 0 & 0 \\
	1 & 0 & 0 & 0 \\
	0 & 0 & 0 & 1 \\
	0 & 0 & 1 & 0 
\end{bmatrix},  \]
in a suitable coordinates. 
An automorphism of type $2B$ is distinguished from one of type $2A$ 
by the dimension of the fixed locus of the action on $\mathbb{P}^3$. 

Let $g$ be an automorphism of a cubic surface of type $2B$. 
The $g$-orbits of the the $27$ lines are the following: 
\[ (E_6), (F_{i6}), (G_i), (G_{m}, F_{m6}), (E_1, E_2), (G_6, F_{12}), (E_i, F_{jk}), (F_{mi},F_{ni}), \]
where $\{m,n\} = \{1,2 \}, \{i,j,k\} = \{3,4,5\}$. 
There is the unique pointwise-fixed line $E_6$ which intersects with each of $6$ invariant lines. 
The remaining $20$ lines are partitioned into $10$ pairs. 
Two of them form tritangent trios together with the pointwise-fixed line, 
 and the others are pairs of mutually skew lines.    
 See \cite[Proposition 9.1]{DD19} for details. 

\begin{lem}[{\cite[Lemma 9.5]{DD19}}]\label{DD19:9.5}
	Let $X$ be a cubic surface admitting an automorphism $g$ of type $2B$, 
	$\ell_0$ be the unique line pointwise-fixed by $g$. 
	In characteristic $\neq 2$, 
	there are exactly $2$ Eckardt points on $\ell_0$ 
	whose corresponding automorphisms of type $2A$ generate a group isomorphic to $(\mathbb{Z}/2\mathbb{Z})^2$ 
	containing $g$. 
	In characteristic $2$, 
	$\ell_0$ contains $5$ Eckardt points 
	whose corresponding automorphisms of type $2A$ generate a group isomorphic to $(\mathbb{Z}/2\mathbb{Z})^4$ 
	containing $g$. 
\end{lem}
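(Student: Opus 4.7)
The plan is to fix coordinates so that $g$ acts in the standard matrix form given above and then locate all Eckardt points on $\ell_0$ by inspecting the five tritangent planes through $\ell_0$. Proposition \ref{DD19:9.2} will then convert each Eckardt point into an involution of type $2A$, and an explicit computation will identify the group generated by these together with $g$.

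First, I would normalize so that $\ell_0 = E_6$ coincides with the line $\{x_0 = x_1,\ x_2 = x_3\}$, i.e.\ the $+1$-eigenline of $g$ in characteristic $\neq 2$, or (in characteristic $2$) the unique line fixed by the non-split Jordan form of $g$. From the orbit description preceding the lemma, the five tritangent planes through $\ell_0$ are $\pi_{j6} = \{E_6, G_j, F_{j6}\}$ for $j = 1,\ldots,5$. The plane $\pi_{56}$ is entirely $g$-invariant, while each $\pi_{j6}$ with $j \leq 4$ contains $E_6$ together with a $g$-swapped pair. For each such plane, an Eckardt point on $\ell_0$ corresponds to the three lines of the plane being concurrent at a point of $\ell_0$.

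Second, in the swapped case, the intersection point $p_j := G_j \cap F_{j6}$ is $g$-fixed, hence lies in the fixed locus of $g$ on $\mathbb{P}^3$. In characteristic $\neq 2$ this fixed locus is a disjoint union of the two skew $\pm 1$-eigenlines; writing a general smooth $g$-invariant cubic in the chosen coordinates, one computes that exactly two of the $p_j$ lie on $\ell_0$ and that the three invariant lines of $\pi_{56}$ do not concur. In characteristic $2$ the eigenvalue $-1$ collapses to $+1$ with a non-diagonalizable block and the fixed locus reduces to $\ell_0$ alone, so every $p_j$ automatically lies on $\ell_0$; an analogous direct check shows that the three $g$-invariant lines of $\pi_{56}$ also concur on $\ell_0$, accounting for the counts $2$ and $5$ respectively.

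Third, Proposition \ref{DD19:9.2} produces an involution $h_p$ of type $2A$ from each Eckardt point $p$ on $\ell_0$. Commutation of $h_p$ with $g$ follows because $p$ is a $g$-fixed point and the associated tritangent plane is $g$-stable, which forces the center and axis of $h_p$ to be preserved by $g$; commutation of two such involutions $h_p,h_{p'}$ is verified by putting both in the canonical $\diag(-1,1,1,1)$-form with respect to a shared coordinate frame compatible with the common fixed line. The subgroup of $\Aut(X)$ generated is therefore elementary abelian of $2$-power order. Counting generators gives rank $2$ in characteristic $\neq 2$ (the two $h_p$'s and $g$, with one relation $h_{p_1}h_{p_2} = g$) and rank $4$ in characteristic $2$.

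The main obstacle is the characteristic $2$ case: beyond tracking how the fixed locus of $g$ degenerates and why three additional Eckardt points materialize on $\ell_0$, one must justify the jump in rank from $2$ to $4$, which amounts to showing that the five involutions $h_{p_1},\ldots,h_{p_5}$ are linearly independent modulo $\langle g\rangle$ in $\Aut(X)$. This requires a finer analysis of how each $h_{p_j}$ permutes the $27$ lines (so that no non-trivial product reduces to the identity or to $g$), combined with the description of elementary abelian $2$-subgroups of $W(\mathsf{E}_6)$ realized on cubic surfaces in small characteristic.
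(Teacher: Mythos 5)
This lemma is quoted from \cite[Lemma 9.5]{DD19} and the paper gives no proof of it, so there is nothing internal to compare your argument to; judged on its own terms, your strategy (locate Eckardt points on $\ell_0$ via the five tritangent planes through it, use the fixed locus of $g$ on $\mathbb{P}^3$, then convert Eckardt points to involutions via Proposition \ref{DD19:9.2}) is the right one, but it contains a concrete error in the combinatorial setup. According to the orbit decomposition stated just before the lemma, among the five tritangent planes $\pi_{6j}=\{E_6,G_j,F_{6j}\}$ through $\ell_0=E_6$, exactly \emph{two} (for $j=1,2$, the orbits $(G_m,F_{m6})$) have their residual pair swapped by $g$, while \emph{three} (for $j=3,4,5$) have both residual lines $g$-invariant --- not one invariant and four swapped as you assert. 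This is not a harmless relabelling: whenever $g$ swaps the two residual lines of a plane through $\ell_0$, their intersection points with $\ell_0$ are interchanged by $g$, which is the identity on $\ell_0$, so those intersection points coincide and the three lines concur there; every swapped plane therefore yields an Eckardt point on $\ell_0$ automatically, in any characteristic. Your claim that ``exactly two of the [four] $p_j$ lie on $\ell_0$'' is thus impossible under your own hypotheses --- with four swapped planes you would be forced to find four Eckardt points in characteristic $\neq 2$. The correct mechanism is: the two swapped planes each give an Eckardt point; for the three planes with invariant residual lines, in characteristic $\neq 2$ the restriction of $g$ to such a plane is a homology with axis $\ell_0$ and center off $\ell_0$, every invariant line other than the axis passes through that center, so $G_j$ and $F_{6j}$ meet off $\ell_0$ and contribute nothing; in characteristic $2$ the center degenerates onto the axis (the fixed locus of $g$ collapses to the single line $\ell_0$), so all five planes contribute, giving $2$ and $2+3=5$.

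Two further points. First, your commutation argument for $h_p,h_{p'}$ is close to circular (``putting both in canonical form with respect to a shared frame'' presupposes what is to be shown); the clean statement is that two automorphisms of type $2A$ commute when the center of each lies on the axis of the other, which here follows because both centers lie on $\ell_0$ and $\ell_0$ lies in both axes. Second, you correctly identify the $(\mathbb{Z}/2\mathbb{Z})^4$ assertion in characteristic $2$ as the hard part, but you then leave it entirely unproved; as written the proposal establishes neither the independence of the five involutions modulo one relation nor that $g$ lies in the subgroup they generate in that case. Since the paper itself outsources the whole lemma to \cite{DD19}, the honest course is either to cite it as the paper does or to carry out the characteristic-$2$ group computation explicitly (e.g.\ in the coordinates of Theorem \ref{2B-octa}).
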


\begin{thm}[The $2B$-octanomial normal form]\label{2B-octa}
	Every cubic surface admitting an automorphism of type $2B$ is projectively isomorphic to 
	the surface given by the octanomial form with $a_0=a_1$, $a_2 = a_3$: 
	\begin{equation}\label{eq:2B-octa}
		x_0x_1(x_0+x_1 + a_2x_2 + a_2x_3) + x_2x_3(a_0x_0 + a_0x_1 + x_2 + x_3) = 0.
	\end{equation}
	The surfaces given by the above have the automorphism of type $2B$: 
	\begin{equation}\label{eq:2B-auto}
	\begin{bmatrix}
		0 & 1 & 0 & 0 \\
		1 & 0 & 0 & 0 \\
		0 & 0 & 0 & 1 \\
		0 & 0 & 1 & 0 
	\end{bmatrix}. 
	\end{equation}
	The pointwise-fixed line is 
	$a_0x_0 + a_0x_1 + x_2 + x_3 = x_0 + x_1 + a_2x_2 + a_2x_3 = 0$ 
	containing the $2$ Eckardt points $(1,-1,0,0)$ and $(0,0,1,-1)$. 
\end{thm}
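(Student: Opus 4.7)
The plan is to mirror the proof of Theorem \ref{2A-octa}, but to impose two Eckardt-point conditions simultaneously, one for each of a commuting pair of type-$2A$ involutions whose product is the given $2B$-involution.

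For the forward direction, equation (\ref{eq:2B-octa}) is manifestly invariant under each of the swaps $x_0 \leftrightarrow x_1$ and $x_2 \leftrightarrow x_3$, hence under the composition (\ref{eq:2B-auto}). To confirm the type is $2B$ rather than $2A$, I exhibit a pointwise-fixed line on $X$: substituting $x_0 + x_1 = 0$ and $x_2 + x_3 = 0$ makes both bracketed linear forms in (\ref{eq:2B-octa}) vanish, so the line described in the statement lies on $X$ and is pointwise fixed.

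For the converse, let $X$ admit an automorphism $g$ of type $2B$ with pointwise-fixed line $\ell_0$. By Lemma \ref{DD19:9.5} and Proposition \ref{DD19:9.2}, there exist two distinct Eckardt points $e_1, e_2 \in \ell_0$ whose associated type-$2A$ involutions $g_1, g_2$ commute and satisfy $g = g_1 g_2$. Let $\pi_1$ and $\pi_1'$ denote the tritangent planes at $e_1$ and $e_2$ respectively. Because $\ell_0 \subset X$ passes through both Eckardt points, it is one of the three concurrent lines in each trio, so $\pi_1 \cap \pi_1' = \ell_0$. Two distinct tritangent planes meeting in a common line of $X$ can be placed into a conjugate pair of triads of tritangent trios, as the first-row and first-column planes whose intersection at position $(1,1)$ is the shared line. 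Extending to such a pair and applying the coordinate change from the proof of Theorem \ref{octa} yields the octanomial form (\ref{eq:octa}) with $\pi_1 = x_0 + x_1 + a_3 x_2 + a_2 x_3$ and $\pi_1' = a_1 x_0 + a_0 x_1 + x_2 + x_3$. The Eckardt condition at $e_1$ forces $a_0 = a_1$ and identifies $e_1 = (1,-1,0,0)$, exactly as in Theorem \ref{2A-octa}. The analogous condition for $\pi_1'$ demands that the three lines $\pi_i \cap \pi_1'$ ($i=1,2,3$) share a point on $\ell_0$; the system reduces to $x_0 = x_1 = 0$, $x_2 + x_3 = 0$, $(a_3 - a_2)x_2 = 0$, which admits a nontrivial solution if and only if $a_2 = a_3$, in which case the concurrent point is $e_2 = (0,0,1,-1)$.

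The main obstacle is the conjugate-pair bookkeeping in the converse: one must verify that the two Eckardt-point tritangent planes $\pi_1$ and $\pi_1'$ can be placed into a single conjugate pair of triads with $\ell_0$ as the corner entry. Once this combinatorial fact about the $27$-line configuration is in hand, the two Eckardt conditions decouple into the independent equations $a_0 = a_1$ and $a_2 = a_3$, and no further analysis is needed.
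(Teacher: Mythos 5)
Your proposal is correct and follows essentially the same route as the paper: in the converse you place the two tritangent planes of the Eckardt points on the pointwise-fixed line (Lemma \ref{DD19:9.5}) as $\pi_1$ and $\pi_1'$ of a conjugate pair of triads of tritangent trios and read off $a_0=a_1$ and $a_2=a_3$ from the two concurrency conditions, exactly as the paper does, and the conjugate-pair bookkeeping you flag is asserted without proof in the paper as well. The only cosmetic difference is the forward direction, where the paper exhibits the two commuting type-$2A$ involutions and takes their product, whereas you verify invariance directly; note that to certify the type you should appeal to the dimension of the fixed locus of the matrix on $\mathbb{P}^3$ (two skew lines rather than a hyperplane plus a point), since exhibiting a pointwise-fixed line on $X$ alone does not a priori exclude type $2A$.
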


\begin{proof}
	The surface defined by (\ref{eq:2B-octa}) has the line 
	$x_0 + x_1 + a_2x_2 + a_2x_3 = a_0x_0 + a_0x_1 + x_2 + x_3 = 0$ 
	containing the $2$ Eckardt points $(1,-1,0,0), (0,0,1,-1)$. 
	The corresponding automorphisms of type $2A$ are 
	\[ 
	\begin{bmatrix} 
	0 & 1 & 0 & 0 \\
	1 & 0 & 0 & 0 \\
	0 & 0 & 1 & 0 \\
	0 & 0 & 0 & 1
	\end{bmatrix}, \quad 
	\begin{bmatrix}
	1 & 0 & 0 & 0 \\
	0 & 1 & 0 & 0 \\
	0 & 0 & 0 & 1 \\
	0 & 0 & 1 & 0 
	\end{bmatrix}. 
	\] 
	The product (\ref{eq:2B-auto}) is an automorphism of type $2B$. 
		
	Conversely, let $X$ be a cubic surface admitting an automorphism $g$ of type $2B$. 
	There is a line $\ell$ on $X$ containing $2$ Eckardt points by Lemma \ref{DD19:9.5}. 
	We take the $2$ tritangent planes $\pi_1, \pi'_1$ corresponding to the Eckardt points on $\ell$, 
	then take a conjugate pair of triads of tritangent trios $\{\pi_1,\pi_2,\pi_3\}, \{\pi'_1,\pi'_2,\pi'_3\}$ 
	containing $\pi_1$ and $\pi'_1$. 
	It follows from Lemma \ref{CS:tri} that $X$ can be defined by $\pi_1\pi_2\pi_3 + \pi'_1\pi'_2\pi'_3 = 0$. 
	By coordinate change, 
	we can put  
	$\pi_2 = x_0, \ 
	\pi_3 = x_1, \ 
	\pi'_2 = x_2, \ 
	\pi'_3 = x_3$ and 
	$\pi_1 = x_0 + x_1 + a_3x_2 + a_2x_3, \ 
	\pi'_1 = a_1x_0 + a_0x_1 + x_2 + x_3$
	(See the proof of Theorem \ref{octa}). 
	Because $\pi_1$ and $\pi'_1$ are tritangent planes corresponding to Eckardt points, 
	we have $a_0 = a_1$ and $a_2 = a_3$. 
\end{proof}

\subsection{The stratum $3D$}
The stratum $3D$ is of dimension $2$ in $\mathcal{M}(k)$. 
The action of an automorphism of type $3D$ on $\mathbb{P}^3$ is given by 
\[ \begin{bmatrix}
	1 & 0 & 0 & 0 \\
	0 & 0 & 1 & 0 \\
	0 & 0 & 0 & 1 \\
	0 & 1 & 0 & 0 
\end{bmatrix}, \]
up to projective equivalence \cite[Theorem 10.4]{DD19}. 

\begin{rmk}[Automorphisms of order $3$ {\cite[Subsection 10.2]{DD19}}]\label{ord3}
	Let $X$ be a cubic surface admitting an automorphism $g$ of order $3$. 
	The $g$-orbit of a line on $X$ is 
	an invariant line, a tritangent trio, or a skew triple of lines.  
	The automorphism $g$ of order $3$ is of type $3A$, $3C$, or $3D$. 
	Of whichever type $g$ is, 
	the $27$ lines on $X$ are partitioned into $3$ $g$-invariant conjugate pairs of triads of tritangent trios, 
	for example: 
	\begin{equation}\label{eq:ord3}
		\begin{matrix}
			F_{14} & F_{25} & F_{36} \\
			F_{26} & F_{34} & F_{15} \\
			F_{35} & F_{16} & F_{24} 
		\end{matrix}
		\hspace{15mm}
		\begin{matrix}
			E_1 & G_2 & F_{12} \\
			G_3 & F_{23} & E_2 \\
			F_{13} & E_3 & G_1 
		\end{matrix}
		\hspace{15mm}
		\begin{matrix}
			E_4 & G_5 & F_{45} \\
			G_6 & F_{56} & E_5 \\
			F_{46} & E_6 & G_4 
		\end{matrix}.
	\end{equation}
	Since $g$ acts on triads of tritangent trios, 
	$g$ induces a $g$-invariant conjugate pair of triads of tritangent trios 
	to permute rows or columns, or to transpose the matrix.  
	So the $g$-orbits of the $9$ lines in a $g$-invariant conjugate pair of triads of tritangent trios consist of 
	 $9$ invariant lines, 
	 $3$ of tritangent trios, or 
	 $3$ of skew triples. 
\end{rmk}

Let $g$ be an automorphism of a cubic surface of type $3D$. 
In a suitable labeling, the $g$-orbits of $27$ lines are 
given by 
$(E_6, G_5,F_{56})$, $(G_4, F_{45}, E_5)$, $(F_{46},E_4,G_6)$, 
$(F_{14}, F_{16}, F_{15})$, $(F_{26},F_{25},F_{24})$, $(F_{35}, F_{34}, F_{36})$, 
$(E_1,F_{23},G_1)$, $(G_3, E_3, F_{12})$, $(F_{13},G_2,E_2)$, 
 preserving the $3$ conjugate pairs of triads of tritangent trios (\ref{eq:ord3}). 
 There are $6$ skew triples and $3$ tritangent trios. 

A \textit{trihedral line} of a cubic surface is a line in $\mathbb{P}^3$ 
which is not on the surface 
and contains exactly $3$ Eckardt points of the surface.  
From the next lemma, 
if a line in $\mathbb{P}^3$ which is not on the cubic surface contains distinct $2$ Eckardt points, 
it is a trihedral line.   

\begin{lem}\label{DD19:10.1}
	Let $X$ be a cubic surface, 
	$(\{\pi_1, \pi_2, \pi_3\}, \{\pi'_1,\pi'_2,\pi'_3\})$ be a conjugate pair of triads of tritangent trios. 
	The following are equivalent: 
	\begin{enumerate}
	\item[(i)]
	$\ell = \pi_1'  \cap \pi_2' \cap \pi_3'$ is a line; 
	\item[(ii)]
	all of the tritangent planes $\pi_1,\pi_2, \pi_3$ correspond to Eckardt points; 
	\item[(iii)]
	two of the tritangent planes $\pi_1,\pi_2, \pi_3$ correspond to Eckardt points. 
	\end{enumerate}
	Satisfying the above condition, 
	the line $\ell$ is a trihedral line. 
	Conversely, 
	let $\ell$ be a trihedral line, 
	then there is a triad of tritangent trios $\{\pi'_1, \pi'_2, \pi'_3\}$ such that $\ell =  \pi'_1\cap \pi'_2 \cap \pi'_3$. 
\end{lem}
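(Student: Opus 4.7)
The plan is to carry out every step in the octanomial coordinates produced in the proof of Theorem \ref{octa}. Starting from the Cayley--Salmon equation $\pi_1\pi_2\pi_3 + \pi'_1\pi'_2\pi'_3 = 0$ for $X$ associated to the given conjugate pair (Lemma \ref{CS:tri}) and following that proof verbatim, I will change coordinates so that
\[
\pi_1 = x_0, \quad \pi_2 = x_1, \quad \pi_3 = x_0 + x_1 + a_3 x_2 + a_2 x_3,
\]
\[
\pi'_1 = x_2, \quad \pi'_2 = x_3, \quad \pi'_3 = a_1 x_0 + a_0 x_1 + x_2 + x_3.
\]
The three primed planes share a common line iff the $3\times 4$ coefficient matrix has rank $2$, which in this setup reads $a_0 = a_1 = 0$; when this holds, the common line is $\ell = \{x_2 = x_3 = 0\}$.

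Next I will compute the Eckardt condition for each $\pi_i$ exactly as in the proof of Theorem \ref{2A-octa}. On each plane $\pi_i$, two of the three coplanar lines $\pi_i \cap \pi'_j$ meet at a distinguished point, and imposing that the remaining line pass through it produces a single linear equation in $a_0, a_1$. A direct check gives: $\pi_1$ is Eckardt iff $a_0 = 0$, $\pi_2$ iff $a_1 = 0$, and $\pi_3$ iff $a_0 = a_1$. Any two of these force the third, so (iii) $\Rightarrow$ (ii), and both are equivalent to the line condition $a_0 = a_1 = 0$, which is (i). When this common condition holds, the equation of $X$ restricted to $\ell$ collapses to $x_0 x_1(x_0 + x_1)$, so $\ell \not\subset X$ and $\ell \cap X = \{(1,0,0,0),\, (0,1,0,0),\, (1,-1,0,0)\}$, which are precisely the Eckardt points of $\pi_2, \pi_1, \pi_3$. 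Hence $\ell$ is a trihedral line.

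For the converse I start from a trihedral line $\ell$ with Eckardt points $Q_1, Q_2, Q_3$ and let $\pi_1, \pi_2, \pi_3$ be the corresponding tritangent planes. I will show $\{\pi_1,\pi_2,\pi_3\}$ is a triad of tritangent trios. They are distinct: if $\pi_i = \pi_j$, the Eckardt points $Q_i, Q_j$ coincide, contradicting $Q_i \neq Q_j$. Moreover, no two share a line of $X$: if $\pi_i$ and $\pi_j$ shared a line $m \subset X$, then $m$ would be one of the three lines of $\pi_i$ through $Q_i$ and likewise one of those of $\pi_j$ through $Q_j$, so $m$ would be the unique line through $Q_i$ and $Q_j$, namely $\ell$, contradicting $\ell \not\subset X$. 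Hence $\{\pi_1,\pi_2,\pi_3\}$ extends uniquely to a conjugate pair $(\{\pi_1,\pi_2,\pi_3\},\{\pi'_1,\pi'_2,\pi'_3\})$, and the forward direction gives that $\pi'_1 \cap \pi'_2 \cap \pi'_3$ is a line containing $Q_1, Q_2, Q_3$; since two distinct points already determine $\ell$, this intersection equals $\ell$.

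The technical heart of the argument is the coordinate reduction in the first paragraph together with identifying each Eckardt condition as exactly one of the equations $a_0 = 0$, $a_1 = 0$, $a_0 = a_1$. The most delicate point in the converse is ruling out that two Eckardt tritangent planes share a common line of $X$, and it is precisely at this step that the hypothesis that $\ell$ is not contained in $X$ (built into the definition of a trihedral line) is used.
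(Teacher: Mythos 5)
Your proof is correct, but for the main equivalence it takes a genuinely different route from the paper. The paper does not prove the equivalence of (i)--(iii) or the claim that $\ell$ is a trihedral line at all: it cites \cite[Lemma 10.1]{DD19} for that part and only writes out the converse. You instead derive the equivalence by direct computation in the octanomial coordinates of Theorem \ref{octa}: the rank condition for $\pi'_1\cap\pi'_2\cap\pi'_3$ to be a line is $a_0=a_1=0$, the Eckardt conditions for $\pi_1,\pi_2,\pi_3$ are $a_0=0$, $a_1=0$, $a_0=a_1$ respectively, and any two of these force the third; restricting the equation to $x_2=x_3=0$ then shows $\ell\cap X$ consists exactly of the three Eckardt points $(1,0,0,0)$, $(0,1,0,0)$, $(1,-1,0,0)$, so $\ell$ is trihedral. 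This is precisely the Eckardt computation the paper carries out inside the proofs of Theorems \ref{2A-octa} and \ref{3D-octa}, so your version makes the lemma self-contained and characteristic-free at little extra cost, whereas the paper's citation buys brevity. For the converse your argument coincides with the paper's (distinctness of the three planes, the shared-line obstruction via $\ell\not\subset X$, completion to a conjugate pair), and you add the small but worthwhile detail that the line produced by the forward direction contains the two Eckardt points $Q_1,Q_2$ and therefore must equal $\ell$, a step the paper leaves implicit.
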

 
\begin{proof}
	For the former, see \cite[Lemma 10.1]{DD19}. 
	For the converse, 
	let $\ell$ be a trihedral line and $\pi_1, \pi_2$ be 
	the tritangent planes corresponding to the Eckardt points $q_1,q_2$ on $\ell$. 
	If $\pi_1$ and $\pi_2$ share a line on $X$, 
	the line contains $q_1$ and $q_2$, 
	so it is exactly $\ell$, 
	which is a contradiction. 
	There is a unique tritangent plane $\pi_3$ such that $\{\pi_1, \pi_2, \pi_3\}$ is a triad of tritangent trios. 
	Let $\{\pi'_1, \pi'_2, \pi'_3\}$ be the conjugate triad of tritangent trios, 
	then $\ell = \pi'_1 \cap \pi'_2 \cap \pi'_3$. 
\end{proof}

\begin{lem}[{\cite[Lemma 10.15]{DD19}}]\label{DD19:10.15}
	Let $X$ be a cubic surface admitting an automorphism of type $3D$. 
	Then there is a $g$-invariant trihedral line $\ell$ 
	such that the automorphisms of type $2A$ corresponding to the Eckardt points on $\ell$ 
	generate a group isomorphic to $\mathfrak{S}_3$ containing $g$.  
\end{lem}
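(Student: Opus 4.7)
The plan is to produce an explicit $g$-invariant trihedral line from the orbit data of $g$ via Remark~\ref{ord3} and Lemma~\ref{DD19:10.1}, and then recover $g$ inside the $\mathfrak{S}_3$ generated by the three associated $2A$-involutions supplied by Proposition~\ref{DD19:9.2}.

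First, by Remark~\ref{ord3} the $27$ lines split into three $g$-invariant conjugate pairs of triads of tritangent trios, and the $g$-orbit list supplied just before the statement shows that the three orbits $\{E_6,G_5,F_{56}\}$, $\{G_4,F_{45},E_5\}$, $\{F_{46},E_4,G_6\}$ are precisely the three $g$-invariant tritangent trios. These are the column triad $\{\pi_{46},\pi_{65},\pi_{54}\}$ of the second matrix in (\ref{eq:ord3}); so $g$ fixes each of these three tritangent planes while cyclically permuting the three lines on each, and it cyclically permutes the conjugate row triad $\{\pi_{45},\pi_{56},\pi_{64}\}$ as a whole.

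For each $\pi\in\{\pi_{46},\pi_{65},\pi_{54}\}$, the three coplanar lines on $\pi$ are cyclically permuted by the induced order-$3$ projective automorphism of $\pi\cong\mathbb{P}^2$. A short coordinate computation (summing the three defining linear forms and using $1+\zeta+\zeta^2=0$ in characteristic $\neq 3$) shows that any such cyclically permuted triple of lines in $\mathbb{P}^2$ must be concurrent at the unique fixed point of $g|_\pi$; hence each of the three planes $\pi_{46},\pi_{65},\pi_{54}$ corresponds to an Eckardt point. Applying Lemma~\ref{DD19:10.1}(ii)$\Rightarrow$(i) to this triad yields the trihedral line
\[
\ell = \pi_{45}\cap\pi_{56}\cap\pi_{64},
\]
which is $g$-invariant because $g$ cyclically permutes the three planes.

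By Proposition~\ref{DD19:9.2}, the three Eckardt points on $\ell$ give three involutions $g_1,g_2,g_3$ of type $2A$, and $g$ preserves $\ell$ while permuting the three centers. To identify $H=\langle g_1,g_2,g_3\rangle$ with $\mathfrak{S}_3$, I would check $(g_ig_j)^3=\mathrm{id}$ for $i\neq j$ — each such product preserves $\ell$ and acts nontrivially on its three Eckardt points, hence has order $3$ — and observe that the three involutions are pairwise non-commuting. Since $g$ acts on the three Eckardt points by the same $3$-cycle as some product $g_ig_j$, it must coincide with $(g_ig_j)^{\pm 1}$, placing $g\in H$. The main obstacle will be the concurrency step in characteristic~$3$: an order-$3$ element of $\PGL_3$ can fail to be diagonalizable there, so the unipotent case must be ruled out (or handled separately) before a cyclically permuted triple of lines on $\pi$ is guaranteed to meet at a common point.
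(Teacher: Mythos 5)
First, a framing remark: the paper does not prove this lemma at all --- it is imported verbatim from \cite[Lemma 10.15]{DD19} --- so your attempt has to be judged against the actual geometry rather than against an argument in the text.

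The central step of your proposal is false. You claim that each $g$-fixed tritangent plane (one whose three lines are cyclically permuted by $g$) corresponds to an Eckardt point, because ``any such cyclically permuted triple of lines in $\mathbb{P}^2$ must be concurrent at the unique fixed point of $g|_\pi$.'' This is not true even as a statement about $\mathbb{P}^2$: the coordinate triangle $x=0$, $y=0$, $z=0$ is cyclically permuted by $(x:y:z)\mapsto(y:z:x)$ and is not concurrent, and a semisimple order-$3$ element of $\PGL_3$ generically has three isolated fixed points, not one. It also fails in the situation at hand: for the $3D$-octanomial surface (\ref{eq:3D-octa}) with the automorphism (\ref{eq:3D-auto}), the three $g$-fixed tritangent planes are $x_2=0$, $x_3=0$, $x_2+x_3=0$; the trio on $x_2=0$ is cut out by $x_0=0$, $x_1=0$, $x_0+x_1+a_2x_3=0$, which is a single $g$-orbit of lines but is concurrent only when $a_2=0$. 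You have the two conjugate triads exactly backwards: the Eckardt points lie on the three planes that $g$ \emph{permutes} cyclically ($x_0=0$, $x_1=0$, $x_0+x_1+a_3x_2+a_2x_3=0$ in the normal form), and the trihedral line is the common line of the three $g$-\emph{fixed} planes ($x_2=x_3=0$). The real content of the lemma --- why the three permuted planes are all Eckardt, equivalently (via Lemma \ref{DD19:10.1}) why the three invariant planes lie in a single pencil rather than including the isolated invariant eigenplanes of $g$ --- is exactly what your argument does not supply.

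There are secondary gaps as well: ``acts nontrivially on its three Eckardt points, hence has order $3$'' does not follow, since an automorphism fixing three Eckardt points need not be the identity (faithfulness is only guaranteed on the full configuration of the $27$ lines); likewise ``$g$ induces the same $3$-cycle on the centers as $(g_ig_j)^{\pm 1}$, hence coincides with it'' needs the same faithfulness argument applied to all $27$ lines, not to three points. The characteristic-$3$ issue you flag at the end is genuine (the $3D$ action is unipotent-plus-identity there), but it is not where the argument breaks.
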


\begin{thm}[The $3D$-octanomial normal form]\label{3D-octa}
	Every cubic surface admitting an automorphism of type $3D$ is projectively isomorphic to 
	the surface given by the octanomial form with $a_0=a_1=0$: 
	\begin{equation}\label{eq:3D-octa}
		 x_0x_1(x_0 + x_1 + a_3x_2 + a_2x_3) + x_2x_3(x_2+x_3) = 0.
	\end{equation}
	The surfaces given by the above have the automorphism of type $3D$: 
	\begin{equation}\label{eq:3D-auto}
	\begin{bmatrix}
		0 & 1 & 0 & 0 \\
		-1 & -1 & -a_3 & -a_2 \\
		0 & 0 & 1 & 0 \\
		0 & 0 & 0 & 1 
	\end{bmatrix}. 
	\end{equation}
	The invariant trihedral line is $x_2 = x_3 = 0$, 
	containing $3$ Eckardt points 
	$(0,1,0,0)$, $(1,0,0,0)$ and $(1,-1,0,0)$. 
\end{thm}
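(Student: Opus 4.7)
The plan is to verify the forward and converse directions of the correspondence separately. For the forward direction, direct substitution shows that under the pullback induced by (\ref{eq:3D-auto}) the middle factor $x_0 + x_1 + a_3 x_2 + a_2 x_3$ becomes $-x_0$, so $x_0 x_1 (x_0 + x_1 + a_3 x_2 + a_2 x_3)$ maps to $x_1 \cdot (-x_0 - x_1 - a_3 x_2 - a_2 x_3) \cdot (-x_0) = x_0 x_1 (x_0 + x_1 + a_3 x_2 + a_2 x_3)$, while $x_2 x_3 (x_2 + x_3)$ is fixed; the matrix is readily checked to have order $3$ and cyclically permute the points $(1,-1,0,0), (1,0,0,0), (0,1,0,0)$ on the line $x_2 = x_3 = 0$. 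A short concurrence check (using Proposition \ref{DD19:9.2}) identifies these three points as Eckardt, so the line is a $g$-invariant trihedral line. Since the presence of such a trihedral line distinguishes type $3D$ from types $3A$ and $3C$ via Lemma \ref{DD19:10.15} and the orbit analysis in Remark \ref{ord3}, the matrix (\ref{eq:3D-auto}) is of type $3D$.

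For the converse, let $X$ admit an automorphism $g$ of type $3D$. By Lemma \ref{DD19:10.15} there exists a $g$-invariant trihedral line $\ell$ carrying three Eckardt points. Lemma \ref{DD19:10.1} then realizes $\ell = \pi'_1 \cap \pi'_2 \cap \pi'_3$ as the intersection of a triad of tritangent trios whose conjugate triad $\{\pi_1, \pi_2, \pi_3\}$ consists entirely of Eckardt planes. Lemma \ref{CS:tri} gives a Cayley--Salmon equation $\pi_1 \pi_2 \pi_3 + \pi'_1 \pi'_2 \pi'_3 = 0$. Following the coordinate normalization of Theorem \ref{octa}, I would set $\pi_2 = x_0$, $\pi_3 = x_1$, $\pi'_2 = x_2$, $\pi'_3 = x_3$ and scale to obtain $\pi_1 = x_0 + x_1 + a_3 x_2 + a_2 x_3$ and $\pi'_1 = a_1 x_0 + a_0 x_1 + x_2 + x_3$. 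On the plane $\pi_2 = 0$, the surface cuts out the residual lines $x_2 = 0$, $x_3 = 0$, and $a_0 x_1 + x_2 + x_3 = 0$; the Eckardt condition forces these to be concurrent at $(0,1,0,0)$, yielding $a_0 = 0$. Symmetrically, the Eckardt condition on $\pi_3$ gives $a_1 = 0$, and the Eckardt condition on $\pi_1$ is then automatic with common point $(1,-1,0,0)$, completing the normal form.

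The main obstacle I anticipate is confirming that (\ref{eq:3D-auto}) is not merely some automorphism of type $3D$ but precisely the generator exhibited in the statement. This is best handled by realizing (\ref{eq:3D-auto}) as a product of two of the three $2A$-involutions associated to the Eckardt points $(1,0,0,0), (0,1,0,0), (1,-1,0,0)$. The involution attached to $(1,-1,0,0)$ is the swap $x_0 \leftrightarrow x_1$ coming directly from Theorem \ref{2A-octa}; the other two are determined uniquely by their centers together with the requirement to preserve (\ref{eq:3D-octa}) with $a_0 = a_1 = 0$, and the identification of their product with (\ref{eq:3D-auto}) is then a routine matrix computation, with the lower-right $2\times 2$ block equal to the identity because $\ell$ is cut out precisely by $x_2$ and $x_3$.
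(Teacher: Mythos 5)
Your proposal is correct and follows essentially the same route as the paper: the converse direction (Lemma \ref{DD19:10.15} $\to$ Lemma \ref{DD19:10.1} $\to$ Lemma \ref{CS:tri} $\to$ the normalization of Theorem \ref{octa}, then reading off $a_0=a_1=0$ from the Eckardt condition on the planes $x_0=0$ and $x_1=0$) is the paper's argument verbatim, and the realization of (\ref{eq:3D-auto}) as a product of two of the three $2A$-involutions attached to $(1,0,0,0)$, $(0,1,0,0)$, $(1,-1,0,0)$, which you propose in your last paragraph, is exactly how the paper certifies the type. One caution: the claim in your first paragraph that an invariant trihedral line with cyclically permuted Eckardt points by itself distinguishes type $3D$ from $3A$ and $3C$ is not valid as stated --- for instance the type $3C$ automorphism $\diag(\zeta_3,\zeta_3,1,1)$ of the Fermat cubic cyclically permutes the three Eckardt points on the invariant trihedral line $x_1=x_3=0$ --- so you should rely on the product-of-involutions argument (via Lemma \ref{DD19:10.15}) rather than on that dichotomy.
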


\begin{proof}
	The surface defined by (\ref{eq:3D-octa}) has the triad of tritangent trios: 
	$x_0=0, \ x_1=0, \ x_0+x_1+a_3x_2 +a_2x_3=0$
	which corresponds to the Eckardt points 
	$(0,1,0,0), \ (1,0,0,0),\ (1,-1,0,0)$ respectively. 
	Its conjugate triad of triad of tritangent trios is 
	$x_2 = 0, \ x_3 = 0, \  x_2 + x_3 = 0$, 
	whose intersection $x_2=x_3=0$ is a trihedral line 
	by Lemma  \ref{DD19:10.1}. 
	The automorphisms of type $2A$ corresponding to the above Eckardt points are 
	\[ \begin{bmatrix}
		1 & 0 & 0 & 0 \\
		-1 & -1 & -a_3 & -a_2 \\
		0 & 0 & 1 & 0 \\
		0 & 0 & 0 & 1
	\end{bmatrix} , \quad 
	\begin{bmatrix}
		-1 & -1 & -a_3 & -a_2 \\
		0 & 1 & 0 & 0 \\
		0 & 0 & 1 & 0 \\
		0 & 0 & 0 & 1
	\end{bmatrix} ,\quad 
	\begin{bmatrix}
		0 & 1 & 0 & 0 \\
		1 & 0 & 0 & 0 \\
		0 & 0 & 1 & 0 \\
		0 & 0 & 0 & 1 
	\end{bmatrix},\]
	respectively. 
	It follows from Lemma \ref{DD19:10.15} that 
	they generate a group isomorphic to $\mathfrak{S}_3$, 
	and the products of two of them 
	\[ \begin{bmatrix} 
		0 & 1 & 0 & 0 \\
		-1 & -1 & -a_3 & -a_2 \\
		0 & 0 & 1 & 0 \\
		0 & 0 & 0 & 1 
	\end{bmatrix}, \quad 
	\begin{bmatrix}
		-1 & -1 & -a_3 & -a_2 \\
		1 & 0 & 0 & 0 \\
		0 & 0 & 1 & 0 \\
		0 & 0& 0& 1
	\end{bmatrix} \]
	are automorphisms of type $3D$. 

	Conversely, let $X$ be a cubic surface admitting an automorphism $g$ of type $3D$. 
	By Lemma \ref{DD19:10.15},
	there is a $g$-invariant trihedral line $\ell$, 
	and there is a conjugate pair of triads of tritangent trios $(\{\pi_1,\pi_2,\pi_3\},\{\pi_1',\pi_2',\pi_3'\})$ 
	with all of $\pi_1,\pi_2,\pi_3$ corresponding to Eckardt points 
	by Lemma \ref{DD19:10.1}. 
	Lemma \ref{CS:tri} implies that 
	$X$ can be defined by $\pi_1\pi_2\pi_3 + \pi_1' \pi_2' \pi_3' = 0$. 
	By applying coordinate change, 
	we may assume 
	$\pi_2 = x_0, \pi_3 = x_1, \pi'_2 = x_2, \pi'_3 = x_3$, 
	and we get an octanomial form (\ref{eq:octa}) by scaling. 
	Because the tritangent plane $\pi_2$ which is defined by $x_0=0$ corresponds to an Eckardt point, 
	the three lines on this plane given by the intersection with the planes 
	$x_j = 0 \ (j=2,3)$ and $a_1x_0 + a_0x_1 + x_2 + x_3 = 0$ 
	intersect at a point, so $a_0 = 0$. 
	In the same way, we have $a_1 = 0$. 
\end{proof}

\subsection{The stratum $3A$} 
The stratum $3A$ is of dimension $1$ in $\mathcal{M}(k)$. 
The action of an automorphism of type $3A$ on $\mathbb{P}^3$ is given by 
\[ 
\begin{bmatrix}
	\zeta_3 & 0 & 0 & 0 \\
	0 & 1 & 0 & 0 \\
	0 & 0 & 1 & 0 \\
	0 & 0 & 0 & 1
\end{bmatrix} \ 
(p\neq 3), \quad 
\begin{bmatrix}
	1 & 1 & 0 & 0 \\
	0 & 1 & 0 & 0 \\
	0 & 0 & 1 & 0 \\
	0 & 0 & 0 & 1 
\end{bmatrix} \ 
(p=3), 
\]
up to projective equivalence \cite[Theorem 10.4]{DD19}. 
The locus of fixed points of this action consists of a hyperplane $A$, called the \textit{axis}. 
An automorphism of a cubic surface of order $3$ has a pointwise-fixed plane in $\mathbb{P}^3$ 
if and only if it is of type $3A$. 

A cubic surface admits an automorphism of type $3A$ 
if and only if it is \textit{cyclic}
(i.e. it has a triple cover of $\mathbb{P}^2$ with cyclic Galois group) \cite[Lemma 10.5]{DD19}. 
In $p\neq 3$, 
any surface which admits an automorphism of type $3A$ is defined by $x_3^3 + g(x_0,x_1,x_2) = 0$, 
where $g$ is a cubic form in $x_0,x_1,x_2$. 

Let $g$ be an automorphism of a cubic surface of type $3A$. 
In a suitable labeling, 
the $g$-orbits of the $27$ lines are the following: 
$(F_{14},F_{25},F_{36})$, $(F_{26},F_{34},F_{15})$, $(F_{35},F_{16},F_{24})$, 
$(E_1, G_2, F_{12})$, $(G_3,F_{23},E_2)$, $(F_{13}, E_3, G_1)$, 
$(E_4, G_5, F_{45})$, $(G_6, F_{56}, E_5)$, $(F_{46}, E_6, G_4)$, 
preserving the $3$ conjugate pairs of triads of tritangent trios (\ref{eq:ord3}). 
There are $9$ invariant tritangent trios. 
The $3$ lines of each $g$-invariant tritangent trio intersect at a point on the pointwise-fixed plane $A$, 
so $X$ has exactly $9$ Eckardt points which are on $A$.  

\begin{thm}[The $3A$-octanomial normal form]\label{3A-octa}
	In $p \neq 3$ (resp. $p=3$), 
	a general cubic surface admitting an automorphism of type $3A$ is 
	projectively isomorphic to the surface given by the octanomial form 
	with $a_0 = a_1 = 0, \ a_2 + \zeta_3 a_3 = 0$ 
	(resp. $a_0 = a_1 = 0, \ a_2 + a_3 = 0$): 
	\begin{equation}\label{eq:3A-octa}
		 x_0x_1(x_0 + x_1 + a_3 x_2 - \zeta_3 a_3 x_3) + x_2x_3(x_2 + x_3) = 0 
	\end{equation}
	\begin{equation}\label{eq:3A-octa_char3}
		(\text{resp. } x_0x_1(x_0 + x_1 + a_3 x_2 - a_3 x_3) + x_2x_3(x_2 + x_3) = 0 ).
	\end{equation}
	The surfaces given by the above have the automorphism of type $3A$: 
	\begin{equation}\label{eq:3A-auto}
	\begin{bmatrix}
		1 & 0 & 0 & 0 \\
		0 & 1 & 0 & 0 \\
		0 & 0 & 0 & \zeta_3^2 \\
		0 & 0 & -\zeta_3^2 & -\zeta_3^2 
	\end{bmatrix} \quad
	\left(\text{resp. }
	\begin{bmatrix}
		1 & 0 & 0 & 0 \\
		0 & 1 & 0 & 0 \\
		0 & 0 & -1 & -1 \\
		0 & 0 & 1 & 0 
	\end{bmatrix}
	\right).
	\end{equation}
	The axis is defined by 
	$\zeta_3x_2 - x_3 =0$ 
	(resp. $x_2 -x_3 = 0$). 
\end{thm}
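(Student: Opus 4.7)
The plan is to follow the template of Theorem \ref{3D-octa}: first confirm the forward direction by direct computation, and then obtain the converse by reducing to the $3D$-octanomial form via the specialization $3D \to 3A$ visible in Figure \ref{fig:stra1}.

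For the forward direction, I would verify three properties of the matrix $g$ in (\ref{eq:3A-auto}). The lower-right $2\times 2$ block has characteristic polynomial $t^2 + \zeta_3^2 t + \zeta_3 = (t-1)(t-\zeta_3)$ in characteristic $\neq 3$ (resp. $(t-1)^2$ in characteristic $3$), so $g$ has eigenvalue $1$ with a $3$-dimensional eigenspace on $k^4$; together with $g^3 = \mathrm{id}$, this shows $g$ has type $3A$, with pointwise-fixed plane $\zeta_3 x_2 - x_3 = 0$ (resp.\ $x_2 - x_3 = 0$). A short substitution using $1 + \zeta_3 + \zeta_3^2 = 0$ then shows that $g$ cyclically permutes $\{x_2,\, x_3,\, -x_2 - x_3\}$ up to the scalar $\zeta_3^2$, so the factor $x_2 x_3 (x_2 + x_3)$ is preserved, and that the linear form $a_3 x_2 - \zeta_3 a_3 x_3$ is itself fixed; hence the whole cubic (\ref{eq:3A-octa}) is preserved. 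The characteristic 3 argument runs in parallel, with $a_2 + a_3 = 0$ playing the role of $a_2 + \zeta_3 a_3 = 0$ and the $(x_2,x_3)$-block becoming unipotent.

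For the converse, let $X$ be a general cubic surface admitting a type-$3A$ automorphism. Since $3A$ is a specialization of $3D$, $X$ also admits a type-$3D$ automorphism, so Theorem \ref{3D-octa} puts $X$ in the $3D$-octanomial form $x_0 x_1(x_0 + x_1 + a_3 x_2 + a_2 x_3) + x_2 x_3(x_2 + x_3) = 0$, in which $x_2 = x_3 = 0$ is a $g$-invariant trihedral line carrying three Eckardt points. I would then use the structure of the three invariant conjugate pairs of triads of tritangent trios attached to an order-$3$ automorphism (Remark \ref{ord3}), together with the fact that the $3A$-automorphism $g$ has a pointwise-fixed plane containing nine Eckardt points, to pin down $g$ (up to replacement by $g^{-1}$) as the linear map fixing $x_0, x_1$ and cyclically permuting $\{x_2,\, x_3,\, -x_2 - x_3\}$. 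Requiring that this $g$ also preserve the linear form $x_0 + x_1 + a_3 x_2 + a_2 x_3$ then forces exactly $a_2 + \zeta_3 a_3 = 0$ (resp.\ $a_2 + a_3 = 0$) and yields the claimed normal form.

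The main technical obstacle is showing that $g$ must have this precise block structure: a priori $g$ could act nontrivially on $(x_0, x_1)$ as well, and one must use the $g$-orbit structure on the nine invariant tritangent trios and on the nine Eckardt points of the axial plane to eliminate the alternatives. The word ``general'' in the theorem reflects the fact that certain highly symmetric cubic surfaces (in particular those simultaneously of type $3C$) carry type-$3A$ automorphisms that are not $\Aut(X)$-conjugate to (\ref{eq:3A-auto}) and so escape this normal form; such exceptional surfaces are handled within their own strata.
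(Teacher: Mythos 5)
Your forward direction is fine: the eigenvalue computation for the lower-right block and the check that the substitution fixes $a_3x_2-\zeta_3 a_3x_3$ while permuting $x_2,\,x_3,\,x_2+x_3$ up to the scalar $\zeta_3^2$ do verify that (\ref{eq:3A-auto}) is a type-$3A$ automorphism of (\ref{eq:3A-octa}). The paper does this more economically: the coordinate change $(x_2,x_3)\mapsto(x_2+\zeta_3x_3,\zeta_3x_2+x_3)$ (resp.\ $x_3\mapsto x_2-x_3$) turns (\ref{eq:3A-octa}) into the cyclic form $-x_3^3-x_2^3+a_3(1-\zeta_3^2)x_0x_1x_2+x_0x_1(x_0+x_1)=0$, on which the automorphism is simply $x_3\mapsto\zeta_3x_3$; this at once exhibits the axis and ties the statement to the characterization of the $3A$ stratum as the cyclic surfaces $x_3^3+g(x_0,x_1,x_2)=0$, from which the converse follows for general $g$ by reversing the substitution.

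The genuine gap is in your converse, and it is exactly the step you defer as the ``main technical obstacle.'' Passing through the $3D$-octanomial form first is the wrong starting point: those coordinates are built (via Lemma \ref{DD19:10.15}) from a trihedral line invariant under some type-$3D$ automorphism $h$, and there is no reason that line, or the conjugate pair of triads behind it, is invariant under the given type-$3A$ automorphism $g$; a priori $g$ has no block structure whatsoever in those coordinates, and the orbit bookkeeping you gesture at does not obviously repair this. The fix is to adapt the coordinates to $g$ from the outset: by Remark \ref{ord3} the $27$ lines fall into three $g$-invariant conjugate pairs of triads, and for type $3A$ the nine $g$-orbits are nine invariant tritangent trios, so in each invariant pair the three planes $\pi_1,\pi_2,\pi_3$ all correspond to Eckardt points while $g$ cyclically permutes $\pi'_1,\pi'_2,\pi'_3$. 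Running the coordinate change of Theorem \ref{octa} on such a pair gives $a_0=a_1=0$ exactly as in Theorem \ref{3D-octa} and puts $g$ into the desired block form by construction; preservation of $x_0x_1(x_0+x_1)$ then normalizes the $(x_0,x_1)$-block to the identity, and invariance of $a_3x_2+a_2x_3$ under the $(x_2,x_3)$-block forces $a_2+\zeta_3a_3=0$ or $a_2+\zeta_3^2a_3=0$ (the two cases swap under $g\mapsto g^{-1}$, i.e.\ $x_2\leftrightarrow x_3$). Note also that your route silently assumes the containment of the $3A$ stratum in the $3D$ stratum in every characteristic, which Figure \ref{fig:stra1} only certifies for $p\neq2,3,5$, whereas the paper's cyclic-form argument needs no such input.
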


\begin{proof}
	In $p \neq 3$ 
	(resp. $p=3$), 
	by the coordinate change 
	$(x_2, x_3) \mapsto (x_2 + \zeta_3 x_3,  \zeta_3 x_2 + x_3)$
	(resp. $x_3 \mapsto x_2 - x_3$), 
	the surface (\ref{eq:3A-octa}) (resp. (\ref{eq:3A-octa_char3})) is defined by the equation 
	\[ 
	-x_3^3 -x_2^3 + a_3 (1-\zeta_3^2)x_0x_1x_2 + x_0x_1(x_0 + x_1) = 0 
	\]
	\[
	(\text{resp.} -x_2^3 + x_2x_3^2 + a_3 x_0x_1x_3 + x_0x_1(x_0 + x_1) =0), 
	\]
	which has the automorphism 
	$x_3 \mapsto \zeta_3 x_3$
	(resp. $x_2 \mapsto x_2 + x_3$) of type $3A$. 
\end{proof}

\subsection{The stratum $4A$ in $p \neq 2$}
The stratum $4A$ is of dimension $1$ in $\mathcal{M}(k)$. 
The action of an automorphism of type $4A$ on $\mathbb{P}^3$ is given by 
\[ 
\begin{bmatrix}
	i & 0 & 0 & 0 \\
	0 & -1 & 0 & 0 \\
	0 & 0 & 1 & 0 \\
	0 & 0 & 0 & 1
\end{bmatrix} \ (p\neq 2), \quad 
\begin{bmatrix}
	1 & 1 & 0 & 0 \\
	0 & 1 & 1 & 0 \\
	0 & 0 & 1 & 0 \\
	0 & 0 & 0 & 1
\end{bmatrix} \ (p=2), 
\]
up to projective equivalence \cite[Lemma 11.1]{DD19}. 

Let $g$ be an automorphism of a cubic surface of type $4A$. 
The $g$-orbits of the $27$ lines are the following: 
$(E_6), (F_{56}), (G_5)$, 
$(G_1, G_2, F_{16}, F_{26})$, $(G_3, G_4, F_{36}, F_{46})$, $(E_5, F_{24}, G_6, F_{13})$, 
$(E_1, F_{45}, F_{15}, E_4)$, $(F_{12}, F_{23}, F_{34}, F_{14})$, $(E_2, E_3, F_{25}, F_{35})$ in a suitable labeling. 
There are $3$ invariant lines which form a tritangent trio. 
The remaining $24$ lines are partitioned into $6$ orbits of $4$ lines. 
Each orbit consists of $2$ pairs of incident lines, 
each of which form a tritangent trio with one of the invariant lines. 
We note the $g^2$-orbits are the ones of type $2A$ in Subsection \ref{subsec:2A}. 

In $p=2$, 
an automorphism of type $4A$ is realized as products of automorphisms of type $2A$, 
and the stratum $4A$ is the same as the stratum $6E$.  
We will observe it in Subsection \ref{sec:6E=4B=4A_char2}.
On the other hand, 
an automorphism of type $4A$ is not realized as products of automorphisms of type $2A$ in $p\neq 2$. 
In the case, 
the arrangement of the Eckardt points has little information on the surface, 
hence we observe the normal form given by \cite{DD19}. 

\begin{lem}[{\cite[Lemma 11.4]{DD19}}]
	In $p \neq 2$, 
	a general cubic surface admitting an automorphism of type $4A$ is 
	projectively isomorphic to the surface defined by 
	\begin{equation}\label{eq:4A}
		x_3^2 x_2 + x_2^2 x_0 + x_1(x_1-x_0)(x_1-cx_0) = 0, 
	\end{equation}
	where $c$ is a parameter, 
	and the automorphism $(x_0, x_1, x_2, x_3) \mapsto (x_0, x_1, -x_2, ix_3)$ is of type $4A$. 
\end{lem}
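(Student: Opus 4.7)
The plan is to exploit the explicit diagonal form of $g$ to cut the space of $g$-invariant cubic forms down to a short list of monomials, and then to use the centralizer of $g$ in $\operatorname{PGL}_4$ to reduce the resulting coefficients to one essential parameter.

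After conjugating $g$ from $\operatorname{diag}(i,-1,1,1)$ to $\operatorname{diag}(1,1,-1,i)$ (a mere permutation of coordinates), each degree-$3$ monomial $x_0^a x_1^b x_2^c x_3^d$ is rescaled by $(-1)^c i^d$. Invariance of $f$ up to scalar, combined with the presence of monomials pure in $x_0,x_1$ (forcing the scalar to be $1$), requires $(-1)^c i^d = 1$; in degree $3$ only the monomials $x_0^3, x_0^2 x_1, x_0 x_1^2, x_1^3$, $x_0 x_2^2, x_1 x_2^2$, and $x_2 x_3^2$ survive. Hence every $g$-invariant cubic form has the shape
\[
f = p_3(x_0,x_1) + L(x_0,x_1)\, x_2^2 + \mu\, x_2 x_3^2,
\]
for a binary cubic $p_3$, a binary linear form $L$, and $\mu \in k$. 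Nonsingularity of $X$ forces $\mu \neq 0$ and $L \not\equiv 0$: otherwise $(0{:}0{:}0{:}1)$, respectively a point of the line $x_2 = x_3 = 0$, is a singular point of $X$.

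Next I would exploit the centralizer of $g$, which acts as $\operatorname{GL}_2$ on the eigenspace $\langle x_0, x_1\rangle$ together with independent scalings of $x_2$ and $x_3$. Scaling $x_2$ and $x_3$ normalizes $\mu = 1$ and rescales $L$ to a monic linear form; a $\operatorname{GL}_2$ change of $(x_0,x_1)$ then sends $L$ to $x_0$. The residual freedom preserving $L = x_0$ is a one-parameter family of transformations $x_1 \mapsto x_1 + \beta x_0$ together with compatible rescalings of $x_0,x_1,x_2,x_3$, and it can be used to kill the $x_0^2 x_1$ and $x_0 x_1^2$ terms of $p_3$ (translating away one root and rescaling the other) and to make $p_3$ monic in $x_1$, reducing it to the Weierstrass-type form $x_1(x_1 - x_0)(x_1 - c\, x_0)$ for some $c \in k$. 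A direct substitution confirms that $(x_0,x_1,x_2,x_3)\mapsto (x_0,x_1,-x_2,ix_3)$ preserves the resulting equation, and the eigenvalues $1,1,-1,i$ show this is the required $4A$-type automorphism.

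The main technical hurdle will be the parameter count for the final reduction: one must verify that the stabilizer of the pair $(L,\mu) = (x_0, 1)$ inside the centralizer of $g$ is exactly the right size to absorb two of the four coefficients of $p_3$, leaving a single modulus $c$, in agreement with the fact that the stratum $4A$ is one-dimensional in $\mathcal{M}(k)$. The word ``general'' in the statement then corresponds precisely to the open locus $\mu \neq 0,\ L \not\equiv 0,\ \operatorname{disc}(p_3) \neq 0$; surfaces in the $4A$-stratum violating any of these conditions are special and would require a separate analysis (and in fact comprise the specialization loci $6E$, $8A$, etc.\ from Figure~\ref{fig:stra1}).
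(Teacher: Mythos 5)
First, a point of comparison: the paper does not prove this statement at all --- it is imported verbatim as \cite[Lemma 11.4]{DD19} --- so your argument is being measured against the cited source rather than anything internal. Your strategy (decompose the degree-$3$ forms into eigenspaces of $g=\diag(1,1,-1,i)$, then normalize with the centralizer of $g$) is a sound way to reprove it, your list of monomials with character $1$ is correct, and the resulting shape $f=p_3(x_0,x_1)+L(x_0,x_1)x_2^2+\mu x_2x_3^2$ together with the dimension count ($6$-dimensional projective space of such forms modulo an effectively $5$-dimensional centralizer, leaving one modulus) is right. However, the reduction to the eigenvalue $1$ is circular as written: invariance only gives $f\circ g=\lambda f$ for \emph{some} $\lambda\in\{1,-1,i,-i\}$, and your justification that $\lambda=1$ --- ``the presence of monomials pure in $x_0,x_1$'' --- presupposes the conclusion, since $f$ could a priori lie in one of the other three eigenspaces and contain no such monomial. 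You must rule those out explicitly: for $\lambda=\pm i$ every surviving monomial is divisible by $x_3$, so $f$ is reducible; for $\lambda=-1$ one gets $f=q_2(x_0,x_1)\,x_2+\mu x_2^3+L(x_0,x_1)\,x_3^2$, which vanishes together with all four first partials at any zero of the binary quadratic $q_2$ on the line $x_2=x_3=0$, hence is always singular. Only then is the claimed shape of $f$ established.

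The second concrete problem is the final normalization of $p_3$. Killing the $x_0^2x_1$ and $x_0x_1^2$ terms would produce $ax_0^3+bx_1^3$, whereas the target $x_1(x_1-x_0)(x_1-cx_0)=x_1^3-(1+c)x_0x_1^2+cx_0^2x_1$ retains both of those monomials and instead has vanishing $x_0^3$ coefficient; so the step as described does not reach the stated form. What the residual group (lower-triangular in $(x_0,x_1)$, preserving $L=x_0$, plus the two scalings) actually does is act on the roots of $p_3$ in the affine coordinate $t=x_1/x_0$ by $t\mapsto at+b$; the correct move is to send two of the three roots (assumed distinct and finite) to $0$ and $1$, leaving the third as $c$. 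This also corrects your description of the ``general'' locus: besides $\mu\neq 0$, $L\not\equiv 0$ and $\operatorname{disc}(p_3)\neq 0$ one must exclude the case where $L$ divides $p_3$ (a root of $p_3$ at infinity once $L=x_0$ is fixed), which is not detected by the discriminant. Finally, when $L\equiv 0$ the singular point is $(0{:}0{:}1{:}0)$, not a point of the line $x_2=x_3=0$ (that line does not even lie on the surface for generic $p_3$). With these repairs the argument does go through.
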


\begin{thm}[The $4A$-octanomial normal form in $p \neq 2$]\label{4A-octa}
	In $p \neq 2$, 
	a general cubic surface admitting an automorphism of type $4A$ is 
	projectively isomorphic to the surface given by the octanomial form satisfying the condition $(\ast)$: 
	\begin{equation}\label{eq:4A-octa}
		x_0 x_1( x_0 + x_1 + a_3x_2 + a_2x_3 ) + x_2 x_3( a_0x_0 + a_0x_1 + x_2 + x_3 ) = 0.
	\end{equation}
	$(\ast)$: \ 
	$a_0 = a_1$, 
	$q^3 + 4pq + 8 -2a_0(3q^2 -4a_0q + 4p) = 0$ and 
	$3q -p^2 + a_0 (2a_0^2q - a_0(q^2 + 2p) + pq + 2) = 0$, 
	where $p=a_2 + a_3$, $q=a_2a_3$. 
	
	The surfaces given by the above have the automorphism of type $4A$: 
	\begin{equation}\label{eq:4A-auto}
	\begin{bmatrix}
		A + \frac{i-1}{2} & A-\frac{i+1}{2} & a_3(A-1) & a_2(A-1) \\
		A-\frac{i+1}{2} & A + \frac{i-1}{2} & a_3(A-1) & a_2(A-1) \\
		-2\alpha & -2\alpha & 1-2a_3\alpha & -2a_2\alpha \\
		-2\beta & -2\beta & -2a_3\beta & 1-2a_2\beta \
	\end{bmatrix}, 
	\end{equation}
	where 
	$\alpha = a_2^2/(4(a_0a_2-1))$, 
	$\beta = a_3^2/(4(a_0a_3-1))$ and 
	$A = a_3\alpha + a_2\beta$. 
\end{thm}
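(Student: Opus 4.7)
The plan is to prove the theorem in two directions. For the direct direction, suppose the parameters $a_0, a_2, a_3$ satisfy $(\ast)$; then I claim the matrix $M$ given by $(\ref{eq:4A-auto})$ defines an automorphism of the cubic surface of type $4A$. I would substitute $M\mathbf{x}$ into the cubic form $f$ on the left of $(\ref{eq:4A-octa})$, expand, and verify that $f(M\mathbf{x}) = \lambda f(\mathbf{x})$ for some $\lambda \in k^\times$. The coefficients of the monomials in the resulting expression yield polynomial identities in $a_0, a_2, a_3$; after using the substitutions defining $\alpha, \beta, A$ in the statement, these identities reduce exactly to the two polynomial equations in $(\ast)$ together with $a_0 = a_1$. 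One then verifies that $M^2$ is projectively equivalent to the $2A$-matrix $(\ref{eq:2A-auto})$, so $M$ has order exactly $4$ in $\PGL_4$, and an analysis of the fixed locus of $M$ on $\mathbb{P}^3$ distinguishes type $4A$ from type $4B$.

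For the converse direction, let $X$ be a cubic surface admitting an automorphism $g$ of type $4A$. Then $g^2$ is of type $2A$, so by Theorem \ref{2A-octa}, after a suitable choice of coordinates $X$ is defined by the octanomial form with $a_0 = a_1$, and $g^2$ acts on $\mathbb{P}^3$ by the matrix $(\ref{eq:2A-auto})$. Since $g$ commutes with $g^2$, the action of $g$ preserves the eigenspace decomposition $k^4 = \langle x_0 - x_1\rangle \oplus \langle x_0 + x_1, x_2, x_3\rangle$ of $g^2$. This forces $g$ into a block form with a $1\times 1$ scalar block on the first summand and a $3\times 3$ block on the second whose square is scalar. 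Imposing that $g$ has order $4$ in $\PGL_4$ and preserves the cubic form reduces to a finite system of polynomial equations in the matrix entries of $g$ and in $a_0, a_2, a_3$; elimination of these entries then produces exactly the conditions $(\ast)$.

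The main obstacle will be the algebraic bookkeeping, since the matrix $(\ref{eq:4A-auto})$ has entries involving the rational functions $\alpha, \beta, A$ of $a_0, a_2, a_3$, so the direct expansion of $f(M\mathbf{x})$ is lengthy. A cleaner alternative is to invoke \cite[Lemma 11.4]{DD19}, which gives the one-parameter Dolgachev--Duncan normal form $(\ref{eq:4A})$ for the stratum $4A$ in $p\neq 2$; one then exhibits an explicit projective change of coordinates from $(\ref{eq:4A})$ to $(\ref{eq:4A-octa})$, reads off $a_0, a_2, a_3$ as functions of the parameter $c$, and transports the order-$4$ automorphism $(x_0, x_1, x_2, x_3)\mapsto (x_0,x_1,-x_2,ix_3)$ through this transformation to produce $(\ref{eq:4A-auto})$. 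The equations in $(\ast)$ then appear as the image of the one-parameter $c$-curve inside the three-parameter $(a_0, a_2, a_3)$-space, and the word \emph{general} in the statement refers to the Zariski-open subset on which the coordinate change is nondegenerate.
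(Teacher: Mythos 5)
Your plan would work, but it takes a genuinely different route from the paper. The paper neither verifies the matrix (\ref{eq:4A-auto}) by brute-force substitution nor sets up an elimination problem for an unknown block matrix $g$; it argues constructively from the $2A$-octanomial form (\ref{eq:2A-octa}): the substitutions $(x_0,x_1)\mapsto(x_0+x_1,-x_0+x_1)$, then $x_1\mapsto\tfrac{1}{2}(x_1-a_3x_2-a_2x_3)$, then $(x_2,x_3)\mapsto(\alpha x_1+x_2,\beta x_1+x_3)$ carry the equation to (\ref{eq:4A_3}), on which $\diag(i,-1,1,1)$ visibly acts. The choice of $\alpha,\beta$ kills the coefficients of $x_1x_2^2$ and $x_1x_3^2$, and the two equations of $(\ast)$ are exactly the vanishing of the coefficients of $x_1^3$ and $x_1x_2x_3$ --- essentially a partial Weierstrass normalization of the residual cubic on $\{x_0=0\}$. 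This explains where $(\ast)$, $\alpha$, $\beta$, $\gamma$, $\delta$ and the entries of (\ref{eq:4A-auto}) come from: the matrix is just $\diag(i,-1,1,1)$ conjugated back through the three substitutions, so no independent verification is needed. What your approach buys: your converse direction (decomposing $g$ against the eigenspaces of $g^2$ and eliminating) is more explicit about why a \emph{general} $4A$-surface must land on the locus $(\ast)$, a point the paper leaves implicit in the word ``general''; and your remark that $M^2$ being the $2A$-matrix already pins down the type is cleaner than a fixed-locus analysis, since among the realized order-$4$ classes only $4A$ squares to $2A$. What it costs: both the direct expansion of $f(M\mathbf{x})$ and the elimination are opaque computations whose outcomes you assert rather than derive. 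Your fallback via \cite[Lemma 11.4]{DD19} is closest in spirit to the paper, which likewise aims at an equation of the shape $x_0^2x_1+x_1^2g_1(x_2,x_3)+g_3(x_2,x_3)=0$ as in (\ref{eq:4A_1}), but the paper reaches it from the $2A$-form rather than from the Dolgachev--Duncan model (\ref{eq:4A}).
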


\begin{proof}
	We start from the $2A$-octanomial normal form  (\ref{eq:2A-octa}). 
	We will make coordinate change and add some conditions of parameters 
	to reduce the equation to the form 
	\begin{equation}\label{eq:4A_1}
		x_0^2x_1 + x_1^2g_1(x_2,x_3) + g_3(x_2,x_3) = 0, 
	\end{equation}
	where $g_j(x_2,x_3)$ are homogeneous polynomials in $x_2,x_3$ of degree $j$. 
	In fact, 
	a surfaces defined by (\ref{eq:4A_1}) admits the automorphism 
	$(x_0,x_1,x_2,x_3) \mapsto (ix_0,-x_1,x_2,x_3)$ of type $4A$. 
	
	By applying coordinate change 
	$(x_0, x_1) \mapsto (x_0 + x_1, -x_0 + x_1)$, 
	the equation is the form 
	\[ (x_1^2 - x_0^2)(2x_1 + a_3x_2 + a_2x_3) + x_2x_3(2a_0x_1 + x_2 + x_3) = 0, \]
	whose automorphism of type $2A$ is $x_0 \mapsto -x_0$. 
	By the change $x_1 \mapsto (1/2)(x_1 - a_3x_2 -a_2x_3)$, 
	we reduce the form 
	\begin{equation}\label{eq:4A_2}
	-x_0^2x_1 + \frac{1}{4}(x_1 - a_3x_2 - a_2x_3)^2x_1 
	+ x_2x_3(a_0x_1 + (1-a_0a_3)x_2 + (1-a_0a_2)x_3) = 0.
	\end{equation}
	
	By setting $x_0 = 0$ in (\ref{eq:4A_2}),  
	we have an elliptic curve $C$. 
	We will make coordinate change in $x_2,x_3$ to reduce the equation of $C$ to the form
	$x_1^2g_1(x_2,x_3) + g_3(x_2,x_3) = 0$. 
	By the coordinate change $(x_2, x_3)  \mapsto (\alpha x_1 + x_2, \beta x_1 + x_3)$, 
 	the terms $x_1x_2^2$, $x_1x_3^2$ vanish because of the definitions of $\alpha, \beta$, 
	and so does the ones  $x_1^3$, $x_1x_2x_3$ by the condition $(\ast)$. 
	 
	As a result, 
	the surface is defined by the equation 
	\begin{equation}\label{eq:4A_3}
		-x_0^2x_1 
		+ x_1^2( 
			\gamma x_2
			+ \delta x_3 )
		+(1-a_0a_3)x_2^2x_3 + (1-a_0a_2)x_2x_3^2 = 0, 
	\end{equation}
	where 
	$\gamma = \beta^2(1-a_0a_2) + 2 \alpha \beta (1 - a_0a_3) + a_0 \beta - a_3(1 - A)/2$ and 
	$\delta = \alpha^2 (1-a_0a_3) + 2 \alpha \beta (1 - a_0a_2) + a_0 \alpha - a_2(1 - A)/2$. 
\end{proof}

\subsection{The stratum $4B$ in $p \neq 2$}
The stratum $4B$ is of dimension $1$ in $\mathcal{M}(k)$. 
The action of an automorphism of type $4B$ on $\mathbb{P}^3$ is given by
\[ \begin{bmatrix}
	0 & 0 & 0 & 1 \\
	1 & 0 & 0 & 0 \\
	0 & 1 & 0 & 0 \\
	0 & 0 & 1 & 0 
\end{bmatrix}, \]
up to projective equivalence \cite[Lemma 11.1]{DD19}. 
An automorphism of type $4B$ is distinguished from one of type $4A$ by its square which is of type $2B$. 

Let $g$ be an automorphism of  a cubic surface of type $4B$. 
The $g$-orbits of the $27$ lines are the following: 
$(E_1, E_5, E_2, F_{34})$, $(G_6, F_{25}, F_{12}, F_{15})$, $(F_{16}, G_2, G_1, F_{26})$, 
$(E_4, F_{14}, F_{35}, F_{24})$, $(E_3, F_{13}, F_{45}, F_{23})$, 
$(F_{36}, F_{36})$, $(G_3, G_4)$, $(G_5, F_{56}), (E_6)$ in a suitable labeling.

An automorphism of a cubic surface of type $4B$ is determined by the arrangement of Eckardt points. 
Let $X$ be a cubic surface which admits an automorphism $g$ of type $4B$, 
then there is a $g$-invariant tritangent plane $\pi$. 
In $p \neq 2$, 
$\pi$ does not correspond to an Eckardt point, and 
there are exactly $6$ Eckardt points on $\pi$  
whose corresponding automorphisms of type $2A$ generate 
a group isomorphic to $\mathfrak{S}_4$ containing $g$ \cite[Lemma 11.5]{DD19}.  
In $p=2$, there are more Eckardt points on $\pi$ we will see later (Subsection \ref{sec:6E=4B=4A_char2}). 

Although an automorphism of type $4B$ is realized by the arrangement of Eckardt points, 
we can get a $4B$-octanomial normal form directly. 

\begin{thm}[The $4B$-octanomial normal form in $p \neq 2$] \label{4B-octa}
	In $p \neq 2$, 
	a general cubic surface admitting an automorphism of type $4B$ is 
	projectively isomorphic to the surface given by the octanomial form with $a_0 = a_1 = a_2 = a_3$: 
	\begin{equation}\label{eq:4B-octa}
		x_0x_1(x_0 + x_1 + a_0x_2 + a_0x_3) + x_2x_3(a_0x_0 + a_0x_1 + x_2 + x_3) = 0.
	\end{equation}
	The surfaces given by the above have the automorphism of type $4B$:  
	\begin{equation}\label{eq:4B-auto}
	\begin{bmatrix}
		0 & 0 & 0 & 1\\
		0 & 0 & 1 & 0 \\
		1 & 0 & 0 & 0 \\
		0 & 1 & 0 & 0 
	\end{bmatrix}. 
	\end{equation}
	The invariant tritangent plane is given by $x_0 + x_1 + x_2 + x_3 = 0$. 
\end{thm}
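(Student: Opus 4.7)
The plan is to establish the forward direction by direct substitution, and to handle the converse by starting from the $2B$-octanomial form (available because $g^2$ is of type $2B$) and analyzing the centralizer of the $2B$-involution.

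\textbf{Forward direction.} The matrix \eqref{eq:4B-auto} acts on coordinates as the $4$-cycle $(x_0,x_1,x_2,x_3) \mapsto (x_3,x_2,x_0,x_1)$, whose square is the involution swapping $x_0 \leftrightarrow x_1$ and $x_2 \leftrightarrow x_3$, i.e.\ the standard automorphism \eqref{eq:2B-auto} of type $2B$. Substituting this $4$-cycle into \eqref{eq:4B-octa} with $a_0 = a_1 = a_2 = a_3$ simply interchanges the two trinomial summands $x_0x_1(\cdots)$ and $x_2x_3(\cdots)$, so the equation is preserved. Hence the automorphism has order $4$ with square of type $2B$ and is therefore of type $4B$. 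Direct inspection of eigenspaces then identifies the plane $x_0 + x_1 + x_2 + x_3 = 0$ as $g$-invariant, and by cutting $X$ with this plane one sees it is a tritangent plane.

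\textbf{Converse.} Let $X$ admit an automorphism $g$ of type $4B$, so $g^2$ is of type $2B$. By Theorem \ref{2B-octa}, choose coordinates so that $X$ is defined by \eqref{eq:2B-octa} with parameters $a_0, a_2$, while $g^2$ coincides with \eqref{eq:2B-auto}. Since $g$ commutes with $g^2$, it preserves the $\pm 1$-eigenspace decomposition of $g^2$, namely $V^+ = \langle x_0+x_1,\, x_2+x_3\rangle$ and $V^- = \langle x_0-x_1,\, x_2-x_3\rangle$, so in the adapted basis $g$ is block-diagonal $A \oplus B$ with $A^2 = I$ on $V^+$ and $B^2 = -I$ on $V^-$ (after scalar normalization).

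\textbf{Finishing.} I would then exploit the residual coordinate symmetries that fix the shape of the $2B$-octanomial form---in particular the swap $(x_0,x_1) \leftrightarrow (x_2,x_3)$, scalings in $V^+$ and $V^-$ compatible with \eqref{eq:2B-octa}, and conjugation by diagonal elements of the centralizer---to reduce $g$ to the $4$-cycle of \eqref{eq:4B-auto}. Imposing $g^*f = \lambda f$ on the $2B$-octanomial polynomial $f$ then forces the single relation $a_0 = a_2$, producing \eqref{eq:4B-octa}, and the invariant tritangent plane $x_0 + x_1 + x_2 + x_3 = 0$ falls out as a specific $g$-eigenplane.

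\textbf{Main obstacle.} The delicate step is the normalization: inside the centralizer of the $2B$-involution there is a positive-dimensional family of order-$4$ lifts, and one must verify that every such lift preserving a $2B$-octanomial surface is, up to coordinate changes preserving the $2B$-form, equivalent to the single matrix \eqref{eq:4B-auto}. If the gauge is not pinned down correctly one obtains a spurious additional equation on $(a_0,a_2)$ rather than the clean relation $a_0 = a_2$. Once the gauge is fixed the remaining computation---solving $g^*f = \lambda f$ and reading off the invariant tritangent plane---is routine.
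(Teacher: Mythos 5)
Your forward direction is exactly the paper's proof, which consists of a single sentence: the matrix \eqref{eq:4B-auto} visibly swaps the two trinomial summands of \eqref{eq:4B-octa} when $a_0=a_1=a_2=a_3$, has order $4$, and its square is the involution \eqref{eq:2B-auto}, which is of type $2B$ by Theorem \ref{2B-octa}; since order-$4$ automorphisms are of type $4A$ or $4B$ according to the type of their square, the automorphism is of type $4B$. That part of your proposal is fine.

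The converse is where the problem lies, and the gap you flag as the ``main obstacle'' is not a technical nuisance but an essential one: the program you describe, if completed, would show that \emph{every} cubic surface admitting an automorphism of type $4B$ can be written in the form \eqref{eq:4B-octa}, and the paper explicitly asserts this is false for the stratum $4B$ in $p\neq 2$ (see the discussion in the introduction, and Remark \ref{4B_to_5A}: the Clebsch surface lies in the closure of the stratum $4B$ but its $5A$-parameters are not a specialization of $a_0=a_1=a_2=a_3$). Concretely, the step that cannot be carried out uniformly is the normalization of an arbitrary order-$4$ lift $g$ of the $2B$-involution to the single matrix \eqref{eq:4B-auto} by coordinate changes preserving the shape of \eqref{eq:2B-octa}: for the exceptional surfaces no such normalization exists, so imposing $g^*f=\lambda f$ after a forced ``normalization'' would produce a contradiction rather than the relation $a_0=a_2$. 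The paper does not attempt any such converse; the word ``general'' in the statement is justified (implicitly) by a dimension count: the stratum $4B$ is an irreducible unirational curve in $\mathcal{M}(k)$, and the one-parameter family \eqref{eq:4B-octa} maps non-constantly into it, hence dominates it, so a general member of the stratum is realized. If you want a self-contained argument, replace your converse by this dimension count; your centralizer analysis (block-diagonality of $g$ with respect to the eigenspaces of $g^2$, $A^2=I$ on $V^+$, $B^2=-I$ on $V^-$) is correct as far as it goes, but it cannot be pushed to a uniform normal form for $g$.
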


\begin{proof}
	Since (\ref{eq:4B-auto}) is an automorphism of order $4$ and 
	its square is of type $2B$, 
	it is of type $4B$. 
\end{proof}

\begin{rmk}\label{3D_to_4B}
	The stratum $4B$ is the specialization of the strata $2B$ and $3D$. 
	The $4B$-octanomial normal form preserves 
	the specialization of parameters from the $2B$-octanomial normal form (\ref{eq:2B-octa}), 
	but does not from the $3D$-octanomial normal form (\ref{eq:3D-octa}).  
	Choosing the octanomial parameter which preserves the both specialization, 
	we have $a_0 = a_1 = 0$ and $a_2 = a_3$, 
	which is the same as the $6E$-octanomial normal form (\ref{eq:6E-octa}). 
	
	However, 
	we have another $4B$-octanomial normal form 
	which preserves the specialization $3D \to 4B$ but does not $2B \to 4B$:
	\textit{a general cubic surface admitting an automorphism of type $4B$ is 
	projectively isomorphic to the surface given by the octanomial form with 
	$a_0 =a_1 = 0$, $a_2 + a_3 + 2c = 0$, 
	where $c$ satisfies the equation $c(a_2 + c)(a_3 + c) = 1$: 
	\begin{equation}\label{eq:4B'-octa}
		x_0x_1(x_0 + x_1 + a_3x_2 + a_2x_3) + x_2x_3(x_2 + x_3) = 0. 
	\end{equation}}
	The surface defined by the above has the trihedral line $\ell \colon x_2 = x_3 = 0$ 
	which is the same as the $3D$-octanomial normal form. 
	The tritangent plane $x_2 + x_3 = 0$ contains $\ell$ 
	and the Eckardt point $q_0=(a_2 + c, -(a_3 + c), 1,-1)$ 
	which is not on $\ell$. 	
	(The corresponding tritangent plane to $q_0$ is defined by $x_0 + x_1 + (a_3 + c)x_2 + (a_2 + c)x_3 = 0$ 
	and its tritangent trio is given by the intersection with the planes 
	$x_2 + x_3 =0$, $c(a_2 + c)x_1 + x_2 = 0$ and $c(a_3 + c)x_1 + x_3 = 0$. )
	The product of ones of type $2A$ corresponding to 
	the $2$ Eckardt points on $\ell$ and $q_0$ 
	give the one of type $4B$. 
\end{rmk}

\subsection{The stratum $6E$}
The stratum $6E$ is of dimension $1$ in $\mathcal{M}(k)$.
An automorphism of type $6E$ is realized as a product of 
commuting automorphisms of type $3D$ and $2A$. 

\begin{lem}[{\cite[Lemma 12.8]{DD19}}]\label{DD19:12.8}
	Let $X$ be a cubic surface which admits an automorphism of type $6E$. 
	Then there exist $4$ Eckardt points on $X$, 
	of which $3$ lie on a trihedral line contained in the tritangent plane of the fourth. 
	Their corresponding automorphisms of type $2A$ generate 
	a group isomorphic to $\mathfrak{S}_3 \times \mathbb{Z}/2\mathbb{Z}$. 
\end{lem}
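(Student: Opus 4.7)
The plan is to exploit the factorization of a type-$6E$ automorphism as a product of commuting type-$3D$ and type-$2A$ automorphisms, and then apply the structural results already established for these two simpler strata. Given $g \in \Aut(X)$ of type $6E$, set $s := g^4$ (of order $3$, type $3D$) and $t := g^3$ (of order $2$, type $2A$); then $st = ts$ and $g = st$.

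First I would apply Lemma \ref{DD19:10.15} to the type-$3D$ element $s$: this yields an $s$-invariant trihedral line $\ell$ on which three Eckardt points $q_1, q_2, q_3$ lie, and whose corresponding type-$2A$ involutions $t_1, t_2, t_3$ generate a subgroup $H \cong \mathfrak{S}_3$ of $\Aut(X)$ containing $s$. Independently, Proposition \ref{DD19:9.2} supplies a fourth Eckardt point $q_4$ corresponding to $t$. To locate $q_4$, I would use the commutation $st = ts$: the center $q_4$ of $t$ must be fixed by $s$, and the corresponding tritangent plane $\pi_4$ must be $s$-invariant. Using Remark \ref{ord3}, the $s$-invariant tritangent planes of $X$ are highly constrained --- the $s$-orbits on lines force each $s$-invariant tritangent plane to intersect the nine lines in one of the three invariant conjugate pairs of triads --- and comparing with the tritangent planes at $q_1, q_2, q_3$ (which do not contain $\ell$, since they are the planes of the triad whose conjugate triad meets in $\ell$ by Lemma \ref{DD19:10.1}), the only remaining possibility is that $\pi_4$ is the tritangent plane containing $\ell$. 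In particular $q_4 \notin \ell$, so $t \notin H$.

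Finally, to identify the generated group as $\mathfrak{S}_3 \times \mathbb{Z}/2\mathbb{Z}$, I would check that $t$ commutes with each $t_i$ (not merely with their product $s$). Conjugation sends $t_i t t_i^{-1}$ to the type-$2A$ involution whose center is $t_i(q_4)$. Since each $t_i$ is an involution fixing $\ell$ and permuting the $s$-invariant tritangent planes, it must preserve $\pi_4$ setwise; but $\pi_4$ contains only one Eckardt point off $\ell$ (the image of any other would have to sit on the $t_i$-fixed line in $\pi_4$, forcing it onto $\ell$). Hence $t_i(q_4) = q_4$ and $t_i t t_i^{-1} = t$, so $\langle H, t \rangle \cong \mathfrak{S}_3 \times \mathbb{Z}/2\mathbb{Z}$.

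The hard part will be the middle step: pinning down $\pi_4$ as the tritangent plane containing $\ell$ rather than some other $s$-invariant plane, and simultaneously ruling out $q_4 \in \{q_1, q_2, q_3\}$. This requires a careful combinatorial analysis of the $s$-orbits on the $45$ tritangent planes together with the restriction coming from $t$'s being type $2A$ (rather than type $2B$, where no Eckardt point arises). Once this positioning is secured, the group-theoretic conclusion is essentially automatic by the uniqueness arguments above.
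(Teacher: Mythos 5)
The paper offers no proof of this statement: it is imported verbatim as \cite[Lemma 12.8]{DD19} and used as a black box in the proof of Theorem \ref{6E-octa}, so there is no in-paper argument to measure yours against. Taken on its own merits, your reconstruction from the other quoted ingredients is the right one and its architecture is sound: $s=g^4$ and $t=g^3$ do recover the commuting $3D$/$2A$ factorization asserted in Subsection 5.7; the only $s$-invariant tritangent planes are the three through $\ell$ (every $s$-orbit of lines has size $3$, so an invariant trio must be one of the three orbits that are trios, and $s$ must act as a $3$-cycle on $q_1,q_2,q_3$ --- it cannot centralize $t_1,t_2,t_3$ inside $\mathfrak{S}_3$ --- so it fixes each plane of the conjugate triad through $\ell$); hence $\pi_4\supset\ell$, $q_4\notin\{q_1,q_2,q_3\}$, and $t\notin H$ is immediate anyway because $g=st$ has order $6$ while $\mathfrak{S}_3$ has no such element.

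Two steps in your final paragraph are not justified as written. First, from ``$t_i$ permutes the three $s$-invariant tritangent planes'' you conclude that $t_i$ preserves $\pi_4$ setwise; an involution of a three-element set need only fix one element, so this is a non sequitur. The correct reason comes from the quoted orbit structure of type $2A$: each of the twelve non-invariant pairs completes one of the three $t_i$-invariant lines to a tritangent trio, so every tritangent plane containing a $t_i$-invariant line is $t_i$-invariant; since $\pi_4=\pi'_j$ contains the $t_i$-invariant line $\ell_{ij}=\pi_i\cap\pi'_j$, indeed $t_i(\pi_4)=\pi_4$. Second, the claim that ``$\pi_4$ contains only one Eckardt point off $\ell$'' is neither established nor needed (surfaces in this stratum may well carry additional Eckardt points, e.g.\ at the Clebsch or Fermat specializations). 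To get $t_i(q_4)=q_4$, simply note that $t_i$ permutes the three lines of the tritangent trio on $\pi_4$, which are concurrent at $q_4$, so their common point is fixed. With these two repairs, $t_it t_i^{-1}$ is a type-$2A$ involution centered at $q_4$, hence equals $t$, and the direct product $\mathfrak{S}_3\times\mathbb{Z}/2\mathbb{Z}$ follows; the argument as repaired is characteristic-free.
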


\begin{thm}[The $6E$-octanomial normal form]\label{6E-octa}
	Every cubic surface admitting an automorphism of type $6E$ is 
	projectively isomorphic to the surface given by the octanomial form with $a_0 = a_1 = 0, \ a_2 = a_3$: 
	\begin{equation}\label{eq:6E-octa}
		 x_0x_1(x_0 + x_1 + a_2x_2 + a_2x_3) + x_2x_3(x_2 + x_3) = 0.
	\end{equation}
	The surfaces given by the above have the automorphism of type $6E$: 
	\begin{equation}\label{eq:6E-auto}
	\begin{bmatrix}
		0 & 1 & 0 & 0 \\
		-1 & -1 & -a_2 & -a_2 \\
		0 & 0 & 0 & 1 \\
		0 & 0 & 1 & 0 
	\end{bmatrix}. 
	\end{equation}

\end{thm}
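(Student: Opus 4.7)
The plan is to derive Theorem \ref{6E-octa} from the $3D$-octanomial normal form (Theorem \ref{3D-octa}) together with the structural description of $6E$-automorphisms in Lemma \ref{DD19:12.8}, in the same spirit as the treatments of the preceding strata. First I would verify the forward direction by exhibiting the $6E$-automorphism explicitly; then I would use Lemma \ref{DD19:12.8} to place a general $6E$-surface in the $3D$-octanomial form and read off the extra condition $a_2 = a_3$.

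For the forward direction, let $M$ denote the matrix (\ref{eq:6E-auto}). I would compute $M^2$ and $M^3$ directly: $M^2$ turns out to equal the square of the $3D$-automorphism of Theorem \ref{3D-octa} (specialized to $a_3 = a_2$), hence is of order $3$ and type $3D$, while $M^3$ is the involution $(x_0, x_1, x_2, x_3) \mapsto (x_0, x_1, x_3, x_2)$. To see that $M^3$ is of type $2A$, I note that it fixes the hyperplane $x_2 = x_3$ together with the isolated point $(0,0,1,-1)$, and substituting $x_3 = -x_2$ in (\ref{eq:6E-octa}) gives $x_0 x_1 (x_0 + x_1) = 0$, so the three lines of the tritangent trio on $x_2 + x_3 = 0$ concur at $(0,0,1,-1)$. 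Hence that point is an Eckardt point and $M^3$ is its associated $2A$-involution by Proposition \ref{DD19:9.2}. Since $M^2$ and $M^3$ commute and have coprime orders, $M$ has order $6$ and is a product of commuting elements of types $3D$ and $2A$, which by the characterization recalled at the start of this subsection makes it of type $6E$. Invariance of (\ref{eq:6E-octa}) under $M$ is immediate from this factorization.

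For the converse, let $X$ admit an automorphism of type $6E$. By Lemma \ref{DD19:12.8}, $X$ carries exactly four Eckardt points: three lie on a trihedral line $\ell$, and the tritangent plane corresponding to the fourth contains $\ell$. Applying Theorem \ref{3D-octa} to the three collinear Eckardt points places $X$ in the $3D$-octanomial form with $\ell$ given by $x_2 = x_3 = 0$, so the tritangent planes containing $\ell$ are precisely the conjugate triad $\{x_2 = 0,\ x_3 = 0,\ x_2 + x_3 = 0\}$. A direct substitution into the $3D$-octanomial equation shows that for the three coplanar lines cut on each of these planes to concur, one needs $a_2 = 0$, $a_3 = 0$, and $a_2 = a_3$ respectively. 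By relabeling the Eckardt points on $\ell$ (equivalently, applying the ambient $\mathbb{Z}/3$-symmetry of the $3D$ form that cycles them), I can arrange the fourth Eckardt point to lie on $x_2 + x_3 = 0$, yielding $a_2 = a_3$ as required.

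The main obstacle I expect is making the relabeling step at the end rigorous: one must exhibit a projective coordinate change preserving the shape of the $3D$-octanomial form that permutes $\{(0,1,0,0), (1,0,0,0), (1,-1,0,0)\}$ on $\ell$, and track how $(a_2, a_3)$ transform. An equivalent route I would pursue in parallel is to fix the labeling at the moment of normalization, namely to order the three Eckardt points on $\ell$ so that the one whose corresponding plane in the conjugate triad carries the fourth Eckardt point is mapped to $(1,-1,0,0)$; with this compatible ordering the condition $a_2 = a_3$ drops out at once, bypassing any separate symmetry argument.
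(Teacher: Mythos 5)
Your argument is correct and follows essentially the same route as the paper: the forward direction realizes the matrix (\ref{eq:6E-auto}) as the product of the commuting $3D$-automorphism (\ref{eq:3D-auto}) and the $2A$-involution swapping $x_2,x_3$ (your $M^2$/$M^3$ computation is just an equivalent packaging of this), and the converse combines Lemma \ref{DD19:12.8} with Lemma \ref{DD19:10.1} and the $3D$-normalization, with your ``parallel route'' of labeling the conjugate triad so that the plane carrying the fourth Eckardt point becomes $x_2+x_3=0$ being exactly the paper's choice (``we can assume that $\pi'_1$ corresponds to an Eckardt point'').
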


\begin{proof}
	The surface defined by (\ref{eq:6E-octa}) has the automorphism (\ref{eq:3D-auto}) of type $3D$ 
	and the one of type $2A$ exchanging $x_2$ with $x_3$, 
	which are commuting. 
	The product of them (\ref{eq:6E-auto}) is the one of type $6E$. 
	
	Conversely, 
	let $X$ be a cubic surface admitting an automorphism of type $6E$. 
	By Lemma \ref{DD19:12.8}, 
	there is a trihedral line $\ell$, and is a tritangent plane containing $\ell$ and corresponding to an Eckardt point. 
	There is a conjugate pair of triads of tritangent trios $(\{\pi_1,\pi_2,\pi_3\}, \{\pi'_1,\pi'_2,\pi'_3\})$ such that 
	each of $\pi_i \ ( i=1,2,3)$ corresponds to an Eckardt point and 
	$\ell = \pi'_1\cap \pi'_2 \cap \pi'_3$ by Lemma \ref{DD19:10.1}. 
	Since tritangent planes which contain $\ell$ are $\pi'_i \ (i=1,2,3)$ only, 
	we can assume that $\pi'_1$ corresponds to an Eckardt point. 
	Then $X$ is defined by $\pi_1\pi_2\pi_3 + \pi'_1\pi'_2\pi'_3 = 0$. 
	In the same way as the proof of Theorem \ref{2A-octa} and Theorem \ref{3D-octa}, 
	we get the octanomial form with $a_2 = a_3$ and $a_0 = a_1 = 0$. 
\end{proof}

\subsection{The stratum $6E = 4A = 4B $ in $p=2$}	\label{sec:6E=4B=4A_char2}
Let $p=2$. 
Since the strata $6E, 4A, 4B$ coincide \cite[Lemma 12.9]{DD19}, 
we give automorphisms of type $4A$ and $4B$ of the $6E$-octanomial normal form (\ref{eq:6E-octa}). 

\begin{thm}[The $6E = 4B = 4A$-octanomial normal form in $p=2$]\label{6E=4B=4A-octa_char2}
	In $p=2$, 
	every cubic surface admitting an automorphism of type $6E$, $4A$, or $4B$ 
	is projectively isomorphic to the surface given by the $6E$-octanomial normal form (\ref{eq:6E-octa}): 
	\[  x_0x_1(x_0 + x_1 + a_2x_2 + a_2x_3) + x_2x_3(x_2 + x_3) = 0. \]
	The surfaces given by the above have the automorphisms of all types $6E$, $4A$ and $4B$: 
	\begin{equation}\label{eq:6E_4B_4A-auto}
	\begin{bmatrix}
		0 & 1 & 0 & 0 \\
		1 & 1 & a_2 & a_2 \\
		0 & 0 & 0 & 1 \\
		0 & 0 & 1 & 0 
	\end{bmatrix}, \
		\begin{bmatrix}
		1 & 0 & \mu_2^2 & \mu_2^2 \\
		0 & 1 & \mu_1^2 & \mu_1^2 \\
		\mu_	1 & \mu_2 & \mu_2^2\mu_1 & 1 + \mu_2^2\mu_1 \\
		\mu_1 & \mu_2 & 1 + \mu_2^2\mu_1 & \mu_2^2\mu_1
	\end{bmatrix}, \text{and} \ 
	\begin{bmatrix}
		1 & 0 & \mu_1^2 & \mu_1^2 \\
		1 & 1 & a_2 & a_2 \\
		\mu_1 & \mu_1 & 1 + \mu_1^3 & \mu_1^3 \\
		\mu_1 & \mu _1& \mu_1^3 & 1 + \mu_1^3
	\end{bmatrix}, 
	\end{equation}
	respectively. 
	Here, $\lambda, \mu$ are the distinct roots of the equation $t^3 + a_2t + 1 = 0$ of $t$. 
\end{thm}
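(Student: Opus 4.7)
The plan is to split the proof into two parts. For the projective isomorphism claim, I invoke Lemma 12.9 of \cite{DD19}, which asserts that in characteristic $2$ the strata $6E$, $4A$, and $4B$ coincide as subsets of $\mathcal{M}(k)$. Combined with Theorem \ref{6E-octa}, this means that every cubic surface admitting an automorphism of any of these three types admits one of type $6E$, and is therefore projectively isomorphic to the surface defined by (\ref{eq:6E-octa}). Nothing further is required for this reduction.

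The substantive content is to exhibit the three explicit matrices of the stated types acting on this single normal form. For the $6E$-automorphism, I observe that the first matrix in (\ref{eq:6E_4B_4A-auto}) is simply the automorphism (\ref{eq:6E-auto}) from Theorem \ref{6E-octa}, where the minus signs disappear because $-1=1$ in characteristic $2$. So its type follows at once.

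For the second and third matrices, my plan is threefold. First, verify directly that each matrix preserves the cubic form (\ref{eq:6E-octa}) up to a nonzero scalar. The bookkeeping will use the Vieta relations for the cubic $t^3+a_2t+1=0$, namely $\mu_1+\mu_2+\mu_3=0$, $\mu_1\mu_2+\mu_2\mu_3+\mu_1\mu_3=a_2$, and $\mu_1\mu_2\mu_3=1$ (where $\mu_3$ is the third root), together with the individual relations $\mu_i^3=a_2\mu_i+1$ to collapse high powers. Second, check that each matrix has order exactly $4$ by computing a fourth power and confirming it is the scalar identity (equivalently, the image in $\PGL_4(k)$ has order $4$). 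Third, distinguish $4A$ from $4B$ by examining the square: compute $g^2$ in each case and determine the dimension of its pointwise-fixed locus in $\mathbb{P}^3$. By Subsection \ref{subsec:2A}, an involution is of type $2A$ if its fixed locus contains a hyperplane, and of type $2B$ if it is $1$-dimensional (a line). Matching the squares against these criteria will identify the second matrix as type $4A$ and the third as type $4B$.

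The main obstacle is the first step. The matrices have entries involving both $\mu_1,\mu_2$ and their squares/cubes, and the pullback of (\ref{eq:6E-octa}) produces a sizable expansion in which many cross-terms must cancel. The hypothesis that $\mu_1,\mu_2$ are \emph{distinct} roots of $t^3+a_2t+1=0$ is what guarantees the $4A$-matrix is well-defined and of order $4$ rather than collapsing; on a separate front, the separability of this cubic characterizes precisely the generic locus where the three coinciding strata lie, consistent with the ``general'' nature of the construction. Once the invariance of the cubic is verified, the order and type computations are elementary linear algebra.
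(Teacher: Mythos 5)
Your reduction of the isomorphism claim to \cite[Lemma 12.9]{DD19} combined with Theorem \ref{6E-octa}, and your identification of the first matrix with (\ref{eq:6E-auto}) (signs disappearing in characteristic $2$), coincide with the paper. Where you genuinely diverge is in handling the $4A$ and $4B$ matrices. You propose to verify by direct expansion that each matrix preserves the cubic (using $\mu_i^3=a_2\mu_i+1$ and the Vieta relations), then compute orders and classify the squares by the dimension of their pointwise-fixed loci. The paper never checks form-invariance of these matrices directly: it determines the $13$ Eckardt points on the tritangent plane $x_2+x_3=0$ (the canonical plane), writes down the type-$2A$ involutions $r_0,s_0,t_0,t_{\mu_i}$ attached to them via Proposition \ref{DD19:9.2}, and realizes the $4A$ and $4B$ automorphisms as the products $t_{\mu_1}s_0t_{\mu_2}r_0$ and $s_0r_0t_{\mu_1}$, so that invariance of the cubic is automatic and only the (much simpler) constituent involutions need to be matched against the Eckardt-point data. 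Your route buys independence from the Eckardt-point analysis at the cost of a substantially heavier polynomial computation; it is sound, because your criterion for separating the squares (a type-$2A$ involution fixes a plane, a type-$2B$ involution fixes a line, which remains valid in characteristic $2$ where an involution is unipotent with one or two Jordan blocks of size $2$), together with the fact that $4A$ and $4B$ are the only order-$4$ strata realized by cubic surface automorphisms, does pin down the types. One side remark of yours should be corrected: the separability of $t^3+a_2t+1$ does not delimit a generic locus in characteristic $2$, since its discriminant is $-4a_2^3-27=1\neq 0$ for every $a_2$; the three roots are automatically distinct, which is consistent with the theorem carrying no genericity hypothesis (in contrast with the $4A$ and $4B$ statements in odd characteristic).
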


\begin{proof}
	The automorphism of type $6E$ is the one we see in Theorem \ref{6E-octa}. 
	We will observe the arrangement of the Eckardt points on the surface 
	to obtain automorphisms of type $4A$ and $4B$. 
	
	The cubic surface defined by (\ref{eq:6E-octa}) has the tritangent plane $\pi : x_2 + x_3 = 0$ 
	which corresponds to an Eckardt point $c=(0,0,1,1)$ and contains exactly $13$ Eckardt points. 
	The $3$ lines on $\pi$ are the intersection with the planes 
	$x_0 = 0$, $x_1 = 0$,and $x_0 + x_1 + a_2x_2 + a_2x_3 = 0$, 
	written by $\ell_0, \ell_1, \ell_2$ respectively. 
	Let $\mu_i$ ($i=1,2,3$) be distinct roots of the equation $t^3 + a_2t + 1 = 0$. 
	Each line contains the Eckardt point $c$, 
	and the other $4$ ones given by the following\footnote{
		We can determine the Eckardt point $t_{\mu_i}$ 
		by finding the tritangent plane containing the line $\ell_2$.  
		The tritangent plane corresponding to $t_{\mu_i}$ is 
		$\mu_i(x_0 + x_1) + x_2 + x_3 = 0$, 
		and its tritangent trio is given by the intersection with the planes 
		$x_0 + x_1 = 0$, $\mu_i x_0 + x_2 = 0$ and $\mu_i x_1 + x_2 = 0$.
		Applying automorphism of type $2A$ corresponding to $r_0, s_0$, 
		we get the other Eckardt points $r_{\mu_i}, s_{\mu_i}$. 
		}: 
	$\ell_0$ contains $r_0 = (0,1,0,0)$ and  $r_{\mu_i} = (0,1,\mu_i,\mu_i)$;  
	$\ell_1$ contains $s_0 = (1,0,0,0)$ and $s_\mu = (1,0,\mu_i,\mu_i)$; 
	$\ell_2$ contains $t_0 = (1,1,0,0)$ and $t_\mu = (1,1,\mu_i,\mu_i)$.  
	The corresponding automorphisms of type $2A$ are given by 
	\[ 
	r_0 = 
	\begin{bmatrix}
		1 & 0 & 0 & 0\\
		1 & 1 & a_2 & a_2 \\
		0 & 0 & 1 & 0 \\
		0 & 0 & 0 & 1 
	\end{bmatrix}, 
	s_0 = 
	\begin{bmatrix}
		1 & 1 & a_2 & a_2 \\
		0 & 1 & 0 & 0 \\
		0 & 0 & 1 & 0 \\
		0 & 0 & 0 & 1
	\end{bmatrix}, 
	t_0 = 
	\begin{bmatrix}
		0 & 1 & 0 & 0 \\
		1 & 0 & 0 & 0 \\
		0 & 0 & 1 & 0 \\
		0 & 0 & 0 & 1 
	\end{bmatrix}, 
	t_{\mu_i} = 
	\begin{bmatrix}
		0 & 1 & \mu_i^2 & \mu_i^2 \\
		1 & 0 & \mu_i^2 & \mu_i^2 \\
		\mu_i & \mu_i & \mu_i^3 + 1 & \mu_i^3 \\
		\mu_i & \mu_i & \mu_i^3 & \mu_i^3 + 1
	\end{bmatrix}. 
	\]
	Then, one of automorphisms of type $4A$ and $4B$ are realized by 
	$t_{\mu_1} s_0 t_{\mu_2} r_0$ and $s_0 r_0 t_{\mu_1}$ respectively. 
	
	Conversely, 
	Let $X$ be a cubic surface which admits an automorphism of type $6E$, $4A$, or $4B$, 
	then $X$ admits ones of all types. 
	It follows from Theorem \ref{6E-octa} that $X$ is defined by the equation (\ref{eq:6E-octa}) in a suitable coordinates. 
\end{proof}

\begin{rmk}
	In the above, 
	the plane $\pi$ is  the \textit{canonical plane} and the point $c$ is the \textit{canonical point} of the surface, 
	called in \cite[Subsection 4.1]{DD19}. 
	In the case, the canonical plane is a tritangent plane with the canonical point being its Eckardt point. 
\end{rmk}

\subsection{The stratum $3C$} \label{subsec:3C}
The stratum $3C$ is of dimension $0$ in $\mathcal{M}(k)$, 
corresponding to the isomorphism class of the Fermat cubic surface. 
The action of an automorphism of type $3C$ on $\mathbb{P}^3$ is given by 
$\diag (\zeta_3, \zeta_3, 1, 1)$ in $p\neq 3$ up to projective equivalence \cite[Theorem 10.4]{DD19}. 
In $p=3$, 
the Fermat cubic surface is non-reduced, 
and there is no cubic surface which admits an automorphism of type $3C$. 
So let $p\neq 3$. 

Let $g$ be an automorphism of a cubic surface of type $3C$. 
In a suitable labeling, 
the $g$-orbits of the $27$ lines are given by: 
$(F_{16}, F_{26}, F_{36})$, $(F_{15}, F_{25}, F_{35})$, $(F_{14}, F_{24}, F_{34})$, 
$(E_1, E_2, E_3)$, $(F_{12}, F_{23}, F_{13})$, $(G_1, G_2, G_3)$, 
$(E_4), (E_5), (E_6)$, $(G_4), (G_5), (G_6)$, $(F_{45}), (F_{56}),(F_{46})$, 
preserving the $3$ conjugate pairs of triads of tritangent trios (\ref{eq:ord3}). 
There are $9$ invariant lines and $6$ skew triples. 

Every cubic surface which admits an automorphism of type $3C$ is 
projectively isomorphic to the Fermat cubic surface: $x_0^3 + x_1^3 + x_2^3 + x_3^3 = 0$ \cite[Lem 10.14]{DD19}. 
The Fermat cubic surface has $18$ Eckardt points in $p\neq 2$, 
and $45$ Eckardt points in $p=2$. 
Each tritangent plane defined by $x_i + \zeta_3^u x_j = 0$ ($0\leq i<j \leq 3$, $u = 0,1,2$) corresponds to an Eckardt point in any characteristic.
There are $\binom{4}{2}\times 3 = 18$ such tritangent planes.  
The other tritangent planes are defined by 
$\zeta_3^{u_0} x_0 + \zeta_3^{u_1} x_1 + \zeta_3 ^{u_2} x_2 + \zeta_3^{u_3} x_3 = 0$ ($u_i = 0,1,2$),
which do not correspond to Eckardt points in $p\neq 2$ but do in $p=2$. 
The automorphism group of the surface is isomorphic to $\mathbb{Z}/3\mathbb{Z} \rtimes \mathfrak{S}_4$ of order $648$ in $p\neq 2$, $\PSU_4(2)$ of order $25920$ in $p=2$ \cite[Lemma 5.1]{DD19}.  

Although the  automorphisms of type $3C$ are realized as products of ones of type $2A$, 
we directly make coordinate change to reduce an octanomial form with no parameter.  

\begin{thm}[The $3C$-octanomial normal form]\label{3C-octa}
	Every cubic surface admitting an automorphism of type $3C$ is 
	projectively isomorphic to the surface defined by the octanomial form with $a_0 = a_1 = a_2 = a_3 = 0$: 
	\begin{equation}\label{eq:3C-octa}
		 x_0x_1(x_0+x_1) + x_2x_3(x_2+x_3) = 0. 
	 \end{equation}
	 The surface given by the above has the automorphism of type $3C$: 
	\begin{equation}\label{eq:3C-auto}
	\begin{bmatrix}
		\zeta_3 & 0 & 0 & 0 \\
		0 & \zeta_3 & 0 & 0 \\
		0 & 0 & 1 & 0 \\
		0 & 0 & 0 & 1 
	\end{bmatrix}. 
	\end{equation}
\end{thm}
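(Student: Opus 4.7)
The plan is to follow the two-part structure used in the preceding theorems: first verify that the explicit surface (\ref{eq:3C-octa}) admits the given automorphism (\ref{eq:3C-auto}) of type $3C$, and then establish the converse by reducing to the Fermat cubic surface and performing an explicit coordinate change. Unlike the other cases in the paper, here the converse will not be extracted from the geometry of Eckardt points (these are too numerous and symmetric to pin down the form uniquely), but rather by invoking the classification result \cite[Lemma 10.14]{DD19} that already identifies the underlying surface.

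For the forward direction, I would simply check that the substitution $(x_0,x_1,x_2,x_3)\mapsto (\zeta_3 x_0,\zeta_3 x_1,x_2,x_3)$ multiplies $x_0x_1(x_0+x_1)$ by $\zeta_3^3=1$ and fixes $x_2x_3(x_2+x_3)$, so it preserves the equation (\ref{eq:3C-octa}). Since the matrix (\ref{eq:3C-auto}) has the shape $\diag(\zeta_3,\zeta_3,1,1)$, which is exactly the projective normal form of a type $3C$ element cited in the paragraph preceding the theorem, this automorphism is of type $3C$.

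For the converse, by \cite[Lemma 10.14]{DD19} every cubic surface admitting an automorphism of type $3C$ is projectively isomorphic to the Fermat cubic surface $y_0^3+y_1^3+y_2^3+y_3^3=0$, so it suffices to exhibit one linear change of coordinates that brings the Fermat equation into the octanomial form (\ref{eq:3C-octa}). The key identity is the factorization
\[
y_0^3+y_1^3=(y_0+y_1)(y_0+\zeta_3 y_1)(y_0+\zeta_3^2 y_1),
\]
valid because $p\neq 3$. The plan is to find scalars $a,b$ such that setting $x_0=a(y_0+\zeta_3 y_1)$ and $x_1=b(y_0+\zeta_3^2 y_1)$ makes $x_0+x_1$ a scalar multiple of $y_0+y_1$; the conditions $1/a+1/b$ and $\zeta_3/a+\zeta_3^2/b$ must agree, which forces $a=\zeta_3 b$, and then scaling $b$ appropriately gives $x_0x_1(x_0+x_1)=y_0^3+y_1^3$. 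Applying the same recipe to $(y_2,y_3)$ produces $x_2x_3(x_2+x_3)=y_2^3+y_3^3$, so the Fermat equation becomes (\ref{eq:3C-octa}).

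I do not expect any serious obstacle: the forward direction is one line of verification, and the converse reduces to an explicit linear algebra computation once the factorization trick is in place. The only point requiring mild care is confirming that the scaling scalars can be chosen over $k$ (they can, since $k$ is algebraically closed and $\zeta_3\in k$ because $p\neq 3$), and that the resulting $4\times 4$ matrix is invertible, which follows because the blocks in the $(x_0,x_1)$ and $(x_2,x_3)$ coordinates are each invertible Vandermonde-type $2\times 2$ matrices.
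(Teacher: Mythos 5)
Your proposal is correct and follows essentially the same route as the paper: the paper's entire proof is the explicit coordinate change $(x_0,x_1,x_2,x_3)\mapsto(x_0+\zeta_3x_1,\zeta_3x_0+x_1,x_2+\zeta_3x_3,\zeta_3x_2+x_3)$ identifying the octanomial surface with the Fermat cubic, with the classification of type $3C$ surfaces and the normal form $\diag(\zeta_3,\zeta_3,1,1)$ taken from the preceding discussion, exactly as you invoke them. Your derivation of the scalars via the factorization of $y_0^3+y_1^3$ just reconstructs that same change of coordinates (note the proportionality constant should come out as $a=\zeta_3^2 b$ rather than $\zeta_3 b$ in the direction you set up, but this does not affect the argument).
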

\begin{proof}
	By applying the coordinate change 
	$(x_0, x_1, x_2, x_3) \mapsto 
	(x_0 + \zeta_3 x_1, \zeta_3x_0 + x_1 , x_2 + \zeta_3x_3, \zeta_3x_2 + x_3)$, 
	the surface (\ref{eq:3C-octa}) is defined by the Fermat cubic form. 
\end{proof}

\begin{rmk}[All octanomial parameters of the Fermat cubic surface]\label{octa_para_3C}
	Let $p\neq 3$ and $X$ the Fermat cubic surface. 
	We directly compute all octanomial parameters $(a_0,\dots ,a_3)$ of $X$. 
	We will apply the coordinate change as Remark \ref{octa_para} to each Cayley--Salmon equation given by the following:
	\begin{enumerate}
		\item[(i)]
		$(x_0 + x_1)(x_0 + \zeta_3x_1)(x_0 + \zeta_3^2x_1) 
		+ (x_2 + x_3)(x_2 + \zeta_3x_3)(x_2 + \zeta_3^2x_3) = 0$; 
		\vspace{3pt}
		\item[(ii)] 
		$(x_0 + x_1 +  x_2 + x_3)
		(x_0 +  x_1 +  x_2 + \zeta_3x_3)
		(x_0 +  x_1 +  x_2 + \zeta_3^2x_3)$ \\
		$-3 (x_0 + x_1)( x_1 + x_2)(x_2 + x_0) = 0$; 
		\vspace{3pt}
		\item[(iii)]
		$(x_0 + x_1)
		(\zeta_3x_0 + \zeta_3^{2}x_1 + x_2 +  x_3)			
		(\zeta_3^2 x_0 + \zeta_3 x_1 +  x_2 +  x_3)$ \\
		$+ ( x_2 +  x_3)
		(x_0 +  x_1 + \zeta_3 x_2 + \zeta_3^{2} x_3)
		(x_0 + x_1 + \zeta_3^{2} x_2 + \zeta_3 x_3) = 0$. 
	\end{enumerate}
	
	There are $3=\binom{4}{2}/2$ Cayley--Salmon equations of the form (i) 
	counted by permuting the coordinates, 
	which give the octanomial parameter $(0,0,0,0)$ as in Theorem \ref{3C-octa}. 
	The corresponding conjugate pair of triads of tritangent trios 
	consists of invariant lines of an automorphism of type $3C$, 
	and all tritangent planes correspond to Eckardt points.  
	
	The number of the Cayley--Salmon equation of the form (ii) is $36 = 4 \times 3^2$ 
	counted by scaling $\zeta_3^i$ with the coordinates in addition to the permuting. 
	They give the $12= 3 \times 2^2$ octanomial parameters 
	$(-\zeta_3^{i+1} \cdot 2\cdot 3^{1/3}/(1-\zeta_3), \zeta_3^i \cdot 2\cdot 3^{1/3}/(1-\zeta_3), 0, 0)$, 
	counted ones given by exchanging $a_0, a_1$ and by  $a_2 , a_3$ ($i = 0,1,2$). 
	
	There are  $81 = \binom{4}{2}/2 \times 3^3$ Cayley--Salmon equations of the form (iii) counted in the same way. 
	They give the following $27=4 + 8 + 12 + 1 + 2$ octanomial parameters: 
	$(0,-2,0,-2)$;
	$(0,-2\zeta_3,0,-2\zeta_3^2)$; 
	$(-2\zeta_3^{2i}/3, -2\zeta_3^{2i}/3, 0, -2\cdot 3^{1/3}\zeta_3^i/3)$;
	$(2,2,2,2)$; 
	$(2\zeta_3,2\zeta_3, 2\zeta_3^2, 2\zeta_3^2)$, 
	counted in the same way as the above. 
	
	As a result, 
	there are $40 \ (= 25920/648)$ octanomial parameters in $p\neq 2$. 
	In $p=2$, 
	the all octanomial parameters are $(0,0,0,0)$, being consistent with Remark \ref{octa_para}. 
\end{rmk}

\subsection{The stratum $5A$}\label{subsec:5A}
The stratum $5A$ is of dimension $0$ in $\mathcal{M}(k)$, 
corresponding to the isomorphism class of the Clebsch cubic surface. 
The orbits of the $27$ lines by one of type $5A$ is given by: 
$(E_2)$, $(G_2)$, 
$(E_1,E_5,E_6,E_4,E_3)$, $(G_1,G_5,G_6,G_4,G_3)$, 
$(F_{12},F_{25},F_{26},F_{24},F_{23})$, 
$(F_{13},F_{15},F_{56},F_{46},F_{34})$, 
$(F_{14},F_{35},F_{16},F_{45},F_{36})$,
in a suitable labeling. 

Every cubic surface which admits an automorphism of type $5A$ is 
projectively isomorphic to the Clebsch cubic surface: 
\begin{equation}\label{eq:clebsch}
	\sum_{i=0}^4 x_i = \sum_{0\leq i<j<k \leq 4} x_ix_jx_k = 0, 
\end{equation}
defined in $\mathbb{P}^4$ \cite[Thm 12.2]{DD19}. 
In $p = 5$, 
the Clebsch cubic surface has a singular point, 
and there is no cubic surface which admits an automorphism of type $5A$ \cite[Lem 12.1]{DD19}. 
So let $p\neq 5$. 
We directly make coordinate change and get the octanomial form of the Clebsch cubic surface. 

\begin{thm}[The $5A$-octanomial normal form]\label{5A-octa}
	Every cubic surface admitting an automorphism of type $5A$ is 
	projectively isomorphic to the surface defined by the octanomial form with $a_0 = a_1 =0$,  $a_2 = a_3 = -2$: 
	\begin{equation}\label{eq:5A-octa}
		x_0x_1(x_0+x_1 - 2 x_2 - 2 x_3) + x_2x_3(x_2+x_3) = 0.
	\end{equation}
	The surface given by the above has the automorphism of type $5A$: 
	\begin{equation}\label{eq:5A-auto}
	\begin{bmatrix} 
		-1 & 0 & 0 & 1\\
		-1 & -1 & 2 & 2 \\
		0 & 0 & 0 & 1 \\
		-1 & -1 & 1 & 1 
	\end{bmatrix}.
	\end{equation}
\end{thm}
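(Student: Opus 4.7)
The plan is to mirror the proof of Theorem~\ref{3C-octa}: produce an explicit linear change of coordinates between (\ref{eq:5A-octa}) and the Clebsch cubic surface (\ref{eq:clebsch}), and verify that the matrix (\ref{eq:5A-auto}) acts on (\ref{eq:5A-octa}) as an automorphism of type $5A$. Since \cite[Thm 12.2]{DD19} states that the Clebsch cubic is the unique cubic surface admitting an automorphism of type $5A$, these two ingredients together suffice.

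First I would check by direct substitution that the matrix $g$ of (\ref{eq:5A-auto}) sends the cubic form $f$ defining (\ref{eq:5A-octa}) to a scalar multiple of itself, and then compute $g^5$ to verify that it is a nonzero scalar matrix while $g$ is not. This shows $g$ has order $5$ in $\mathrm{PGL}_4(k)$. Since $5A$ is the unique conjugacy class of elements of order $5$ in $W(\mathsf{E}_6)$, the induced automorphism is of type $5A$.

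Second, to exhibit the isomorphism with the Clebsch cubic, I would eliminate one variable from (\ref{eq:clebsch}) using the linear constraint $\sum x_i = 0$ to obtain a cubic equation in $\mathbb{P}^3$, and search for a linear transformation carrying this equation to (\ref{eq:5A-octa}). A natural way to pin down such a change is to exploit Lemma~\ref{CS:tri}: the pair $(\{x_0=0,\,x_1=0,\,x_0+x_1-2x_2-2x_3=0\},\{x_2=0,\,x_3=0,\,x_2+x_3=0\})$ is a conjugate pair of triads of tritangent trios of (\ref{eq:5A-octa}), so matching it against a conjugate pair on the Clebsch cubic and adjusting scalings determines the coordinate change up to finite ambiguity. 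The converse direction then follows from \cite[Thm 12.2]{DD19}: any cubic surface admitting a type-$5A$ automorphism is projectively isomorphic to the Clebsch cubic, hence also to (\ref{eq:5A-octa}).

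The principal obstacle is computational rather than conceptual. The matrix $g$ has many nonzero entries, so the identity $f \circ g = \lambda f$ produces a substantial polynomial equality to verify, and likewise the coordinate change to Clebsch form requires bookkeeping a $4\times 4$ linear transformation. The cleanest way to organize both computations simultaneously is to diagonalize $g$ over an extension containing a primitive fifth root of unity $\zeta_5$ and identify its eigencoordinates with a cyclically permuted basis of the Clebsch equations. Nonsingularity of the surface (\ref{eq:5A-octa}), which is needed to invoke Theorem~\ref{octa}, is automatic from the identification with the Clebsch cubic in characteristic $p \neq 5$, and can otherwise be checked by a direct Jacobian computation at the finitely many candidate points.
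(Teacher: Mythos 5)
Your proposal is correct and follows essentially the same route as the paper: both reduce the converse direction to the fact that the Clebsch cubic is the unique surface in the stratum $5A$ (\cite[Thm 12.2]{DD19}) and then exhibit an explicit linear coordinate change from a Cayley--Salmon presentation of the Clebsch cubic --- i.e.\ a conjugate pair of triads of tritangent trios, exactly as you suggest via Lemma \ref{CS:tri} --- to the octanomial form (\ref{eq:5A-octa}). Your additional verification that the displayed matrix has order $5$ and hence is of type $5A$ (using that $5A$ is the unique order-$5$ class in $W(\mathsf{E}_6)$) is a sound, if implicit in the paper, way to confirm the second assertion.
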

\begin{proof}
	The Clebsch cubic surface is defined by the Cayley--Salmon equation: 
	\begin{equation}\label{eq:clebsch_cs}
		(x_1 + x_2 + x_3)(x_0 + x_2 + x_3)(x_0 + x_1) + 
		x_2x_3(x_2 + x_3) = 0. 
	\end{equation}
	By making the coordinate change 
	$(x_0, x_1) \mapsto (x_0 -x_2 -x_3, x_1, x_1 - x_2 - x_3)$, 
	the surface (\ref{eq:clebsch_cs}) is defined by the octanomial form (\ref{eq:5A-octa}). 
\end{proof}

\begin{rmk}\label{4B_to_5A}
	The stratum $5A$ is the specialization of the strata $4B$ and $6E$. 
	The $5A$-octanomial normal form preserves the specialization of parameters from 
	the $6E$-octanomial normal form (\ref{eq:6E-octa}), 
	but does not from the $4B$-octanomial normal form (\ref{eq:4B-octa}). 
	
	However, 
	we have another $5A$-octanomial form 
	which is the specialization of another $4B$-octanomial form we give in Remark \ref{3D_to_4B}. 
	By applying the coordinate change $x_2 \mapsto -(x_2 + x_3)$ to (\ref{eq:5A-octa}), 
	we have the octanomial form with $a_0 = a_1 = a_2 = 0$, $a_3 = 2$: 
	\begin{equation}
		x_0x_1 ( x_0 + x_1 + 2 x_2) + x_2 x_3(x_2 + x_3) = 0, 
	\end{equation}
	which is the specialization of another $4B$-octanomial normal form with $c=-1$. 
\end{rmk}

\subsection{The stratum $12A$ in $p\neq2,3$}	
The stratum $12A$ is of dimension $0$ in $\mathcal{M}(k)$. 
Let $X$ be a cubic surface admitting an automorphism $g$ of type $3A$. 
When the elliptic curve on the axis of $g$ has an automorphism $h'$ of order $4$, 
a suitable extension $h$ of $h'$ to an automorphism of $X$ is commuting with $g$, 
and the product $gh$ is the one of type $12A$. 
We will give the condition on the parameters of 
the $3A$-octanomial normal form (\ref{eq:3A-octa}), 
which admits the automorphism of type $4A$ that is commuting with the one of type $3A$. 

\begin{thm}[The $12A$-octanomial normal form]\label{12A-octa}
	In $p \neq 2,3$, 
	every cubic surface admitting an automorphism of type $12A$ is 
	projectively isomorphic to the surface defined by the octanomial form satisfying the condition $(\star)$: 
	\begin{equation}\label{eq:12A-octa}
		 x_0x_1(x_0 + x_1 + a_3 x_2  - \zeta_3 a_3 x_3) + x_2x_3(x_2 + x_3) = 0.
	\end{equation}
	$(\star)$: \ 
	$a_0 = a_1 = 0$, 
	$a_2 + \zeta_3a_3 = 0$ and 
	$a_3^6 + 4\zeta_3(1-\zeta_3) a_3^3 -8 = 0$. 

	The surface given by the above has the automorphism of type $12A$: 
	\begin{equation}\label{eq:12A-auto}
		\begin{bmatrix}
			1 & 0 & 0 & 0 \\
			\frac{a_3'^3 - 6 + 6 i}{12} & i 
			& \frac{a_3'(1 + i)}{2(1 - \zeta_3^2)} & \frac{a_3'(1 + i)(1 + \zeta_3^2)}{2(1-\zeta_3^2)} \\
			-\frac{a_3'^2}{6} & 0 & 0 & -\zeta_3^2 \\
			-\frac{\zeta_3 a_3'^2}{6} & 0 & \zeta_3^2 & \zeta_3^2 
		\end{bmatrix}, 
	\end{equation}
	where $a_3'=(1-\zeta_3^2)a_3$. 
\end{thm}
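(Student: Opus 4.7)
The plan is to work inside the $3A$-octanomial family from Theorem \ref{3A-octa} and impose the additional constraint that produces a commuting order-$4$ automorphism, guided by the decomposition suggested in the remark preceding the theorem. Indeed, if $g$ has type $12A$ then $g^4$ is of type $3A$ and $g^3$ is of type $4A$, and they commute; conversely, any commuting pair of such types yields an element of order $12$, which must lie in the stratum $12A$ since that is the only order-$12$ stratum in Figure \ref{fig:stra1}.

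First I would start from the $3A$-octanomial form
\[ x_0 x_1 (x_0 + x_1 + a_3 x_2 - \zeta_3 a_3 x_3) + x_2 x_3 (x_2 + x_3) = 0, \]
together with its $3A$-automorphism (\ref{eq:3A-auto}), whose axis is the plane $A \colon \zeta_3 x_2 - x_3 = 0$. The intersection $E = X \cap A$ is a smooth plane cubic for generic $a_3$, hence an elliptic curve. I would restrict to coordinates on $A$, bring $E$ into Weierstrass form, and compute $j(E)$ as a rational function of $a_3$. Since $p \neq 2, 3$, the curve $E$ admits an automorphism of order $4$ precisely when $j(E) = 1728$; expanding and simplifying this equality should yield exactly the sextic $a_3^6 + 4\zeta_3(1 - \zeta_3) a_3^3 - 8 = 0$, which is a quadratic in $a_3^3$ and therefore produces finitely many explicit roots.

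Next, under this condition I would lift the order-$4$ automorphism $h'$ of $E$ to an automorphism $h$ of $X$. Since the $3A$-automorphism $g$ acts as the identity on $A$ and $\Aut(X) \hookrightarrow W(\mathsf{E}_6)$ faithfully (Corollary \ref{auto}), a lift that commutes with $g$ is determined by its action on $A$ together with a scalar on the normal direction; I would write the general matrix of this shape, impose preservation of the cubic form (fixing the remaining scalar), and verify commutation with (\ref{eq:3A-auto}). One then checks that $h$ is of type $4A$ by examining its square and orbits on the $27$ lines, and that the product $gh$ matches the matrix (\ref{eq:12A-auto}) and has order $12$; by the initial observation this product is automatically of type $12A$.

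For the converse, given any cubic surface $X$ with an automorphism $g$ of type $12A$, I would apply $g^4$ (type $3A$) together with Theorem \ref{3A-octa} to place $X$ into the $3A$-octanomial form with $a_0 = a_1 = 0$ and $a_2 + \zeta_3 a_3 = 0$, and then apply the analysis above to $g^3$ (type $4A$) to deduce the sextic relation. The main obstacle I anticipate is the Weierstrass reduction and $j$-invariant computation of $E$ with the awkward $\zeta_3$-twist built into the coefficients; the quadratic-in-$a_3^3$ structure of the final sextic should serve as a useful consistency check, reflecting the symmetry $a_3 \mapsto \zeta_3 a_3$ of the $3A$-octanomial equation up to a coordinate change in $x_2, x_3$.
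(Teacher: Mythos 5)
Your proposal follows essentially the same route as the paper: starting from the $3A$-octanomial form, passing to the elliptic curve cut out by the axis, reducing it to Weierstrass form, and imposing the existence of an order-$4$ automorphism (the paper phrases this as the vanishing of the $x_0^3$-coefficient of the Weierstrass form, which is exactly your condition $j=1728$ and yields the stated sextic in $a_3$), then lifting the order-$4$ automorphism to the surface and multiplying it with the type-$3A$ automorphism. The approaches coincide in substance, so the proposal is correct.
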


\begin{proof}
	We start from the $3A$-octanomial normal form (\ref{eq:3A-octa}). 
	By the coordinate change $(x_2, x_3 ) \mapsto (x_2 + \zeta_3x_3, \zeta_3 x_2 + x_3)$, 
	we have 
	\begin{equation}\label{eq:12A_1}
		-x_3^3 -x_2^3 + a_3' x_0x_1x_2 + x_0x_1(x_0 + x_1) = 0. 
	\end{equation}
	This surface admits the automorphism $x_3 \mapsto \zeta_3 x_3$ of type $3A$, 
	whose axis is defined by $x_3 = 0$. 

	Let $C$ be the intersection of the surface (\ref{eq:12A_1}) and the axis $x_3 = 0$, 
	which is an elliptic curve. 
	By the coordinate change  
	$x_1 \mapsto x_1 - (1/2)(x_0 + a_3'x_2)$, 
	the term $x_0^2x_1$ vanishes. 
	Then by 
	$x_2 \mapsto - (a_3'^2 / 12) x_0 + x_2$, 
	$C$ is defined by 
	\begin{equation}\label{eq:12A_2}
		 x_0 x_1^2 - x_2^3 - \frac{a_3'^6 - 36a_3'^3 + 216}{864} x_0^3 + \frac{a_3'(a_3'^3 - 24)}{48} x_0^2 x_2 = 0, 
	\end{equation}
	which is a Weierstrass form. 
	Since the term $x_0^3$ vanishes, 
	$C$ admits the automorphism $\diag (1,i,-1)$ of order $4$. 
	
	The cubic surface is now defined by the equation 
	\begin{equation}\label{eq:12A_3}
		-x_3^3 + x_0 x_1^2 - x_2^3 + \frac{a_3'(a_3'^3 - 24)}{48} x_0^2 x_2 = 0, 
	\end{equation}
	which has the automorphism 
	$\diag (1,i,-1,-1)$ of type $4A$. 
	The product $\diag (1,i,-1,-\zeta_3)$ with the one of type $3A$ 
	 is the automorphism of type $12A$. 

\end{proof}

\begin{rmk}
	In $p=2$ (resp. $p=3$), 
	the stratum $12A$ is the same as the one $3C$ (resp. $8A$). 
	We will give an automorphism of the $3C$ (resp. $8A$)-octanomial normal form of type $12A$ 
	in Subsection \ref{sec:3C=5A=12A_char2} (resp.  \ref{sec:8A=12A_char3}). 
\end{rmk}

\subsection{The stratum $3C = 5A = 12A$ in $p=2$}\label{sec:3C=5A=12A_char2}
Let $p=2$. 
The strata $3C$, $5A$ and $12A$ coincide \cite[Lemma 12.14]{DD19}. 
Since the $5A$-octanomial normal form (\ref{eq:5A-octa}) is the same to  
the $3C$-octanomial normal form (\ref{eq:3C-octa}), 
we give automorphisms of type $12A$ of the surface. 

\begin{thm}[The $3C=5A=12A$-octanomial normal form in $p=2$] \label{3C=5A=12A-octa_char2}
	In $p=2$, 
	every cubic surface admitting an automorphism of type $3C$, $5A$ or $12A$ is 
	projectively isomorphic to the surface given by the $3C$-octanomial normal form (\ref{eq:3C-octa}): 
	\[  x_0x_1(x_0+x_1) + x_2x_3(x_2+x_3) = 0. \]
	The surface given by the above has the automorphisms of all type $3C$, $5A$ and $12A$: 
	\begin{equation}\label{eq:3C=5A=12A-auto_char2}
		\begin{bmatrix}
			\zeta_3 & 0 & 0 & 0 \\
			0 & \zeta_3 & 0 & 0 \\
			0 & 0 & 1 & 0 \\
			0 & 0 & 0 & 1
		\end{bmatrix}, \
		\begin{bmatrix}
			1 & 0 & 0 & 1 \\
			1 & 1 & 0 & 0 \\
			0 & 0 & 0 & 1 \\
			1 & 1 & 1 & 1
		\end{bmatrix}, \text{and} \
		\begin{bmatrix}
			\zeta_3^2 & \zeta_3^2 & 0 & \zeta_3 \\
			\zeta_3^2 & 0 & 0 & 1 \\
			1 & \zeta_3^2 & 1 & \zeta_3 \\
			0 & 0 & 0 & 1
		\end{bmatrix}. 
	\end{equation}
\end{thm}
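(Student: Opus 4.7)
The plan is to split the argument into a structural half (establishing the octanomial normal form) and a computational half (producing and checking the three explicit matrices). For the first half, I would invoke \cite[Lemma 12.14]{DD19}, which asserts that in characteristic $2$ the strata $3C$, $5A$, and $12A$ coincide in $\mathcal{M}(k)$. Thus every cubic surface admitting an automorphism of type $3C$, $5A$, or $12A$ admits in particular one of type $3C$, and Theorem \ref{3C-octa} then places it, up to projective equivalence, on the surface
\[
x_0x_1(x_0+x_1)+x_2x_3(x_2+x_3)=0.
\]
Moreover, a surface in the $3C$-stratum in $p=2$ automatically admits automorphisms of the other two types, so to finish we only need to exhibit such automorphisms on this particular surface.

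For the verification half, the $3C$-matrix $\diag(\zeta_3,\zeta_3,1,1)$ is the one already handled in Theorem \ref{3C-octa}, so nothing new is required. For the matrices $M_{5A}$ and $M_{12A}$ displayed in (\ref{eq:3C=5A=12A-auto_char2}), I would carry out three checks: (i) substitute into the cubic form $x_0x_1(x_0+x_1)+x_2x_3(x_2+x_3)$, working in $\mathbb{F}_2$ or $\mathbb{F}_4[\zeta_3]$ as needed, and confirm that the polynomial is preserved up to a nonzero scalar; (ii) compute $M_{5A}^5$ and $M_{12A}^{12}$ and check they are scalar matrices, while verifying that no smaller power is scalar (so the induced projective orders are exactly $5$ and $12$); (iii) identify the Carter type by noting that in $p=2$ every order-$5$ (resp.\ order-$12$) element of the automorphism group lies in the stratum $5A$ (resp.\ $12A$), since these strata coincide with $3C$ by \cite[Lemma 12.14]{DD19} and the stratum inclusion is determined by order for these classes on this surface.

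The principal obstacle is locating the matrices themselves rather than verifying them afterwards; the coincidence of strata guarantees their existence abstractly but does not produce formulas. The practical route I would take is to leverage power relations: an element of type $12A$ has $4$th power of type $3C$ and $3$rd power in the stratum $4A=4B=6E$ (by Theorem \ref{6E=4B=4A-octa_char2} with $a_2=0$), so I would look for $M_{12A}$ satisfying $M_{12A}^4\equiv \diag(\zeta_3,\zeta_3,1,1)$ and $M_{12A}^3$ of the shape (\ref{eq:6E_4B_4A-auto}); this reduces the search to solving a few linear and scalar compatibility conditions over $\mathbb{F}_4$. For $M_{5A}$ I would transport a known order-$5$ automorphism from the Clebsch presentation (\ref{eq:clebsch}) in characteristic $2$ through the coordinate change used in Theorem \ref{5A-octa}, and then reduce modulo $2$. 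Once the matrices are in hand, steps (i)--(iii) are short direct calculations, so the theorem follows.
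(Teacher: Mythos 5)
Your structural half coincides with the paper's: it invokes the coincidence of the strata $3C$, $5A$, $12A$ in $p=2$ (\cite[Lemma 12.14]{DD19}) and Theorem \ref{3C-octa}, and then reduces everything to exhibiting the three automorphisms on the single surface (\ref{eq:3C-octa}). For the $5A$ matrix your route (transport the order-$5$ automorphism through the coordinate change of Theorem \ref{5A-octa} and reduce mod $2$) is exactly what the paper does --- the displayed matrix is the mod-$2$ reduction of (\ref{eq:5A-auto}). Where you diverge is the $12A$ matrix: the paper \emph{constructs} it, by changing coordinates to $x_0^3+x_1^3+x_2x_3(x_2+x_3)=0$, taking the type-$3A$ automorphism $\diag(\zeta_3,1,1,1)$, extending an order-$4$ automorphism of the elliptic curve on its axis $x_0=0$ to the surface, and multiplying the two; you instead propose to verify the given matrix directly (form preserved, projective order $12$, type pinned down because $12A$ is the only order-$12$ class realized by cubic surface automorphisms). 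Your verification plan (i)--(iii) is sound and would work, though step (iii) leans on the unstated fact that $12C$ is not realizable, which deserves an explicit citation to \cite{DD19} rather than the somewhat circular justification you give.

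One concrete error in your search heuristic: you propose to find $M_{12A}$ by imposing $M_{12A}^4\equiv\diag(\zeta_3,\zeta_3,1,1)$. But the fourth power of a type-$12A$ automorphism is of type $3A$ (the paper states this when introducing the specialization $3A\to 12A$), whereas $\diag(\zeta_3,\zeta_3,1,1)$ is the type-$3C$ automorphism --- its fixed locus is a pair of skew lines, not a hyperplane. No order-$12$ automorphism can have that matrix as its fourth power, so a search built on this relation returns nothing. The correct power relation, visible in the paper's construction, is $M_{12A}^4\equiv\diag(\zeta_3,1,1,1)$ up to conjugacy (a $3A$ element with pointwise-fixed hyperplane). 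Since the matrices are supplied in the statement, this error does not invalidate your proof --- the verification half carries it --- but it would derail you if you actually tried to locate $M_{12A}$ this way.
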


\begin{proof}
	The automorphisms of type $3C$ and $5A$  of (\ref{eq:3C=5A=12A-auto_char2}) are the ones 
	we see in Theorem \ref{3C-octa} and Theorem \ref{5A-octa}. 
	We will give the automorphism of type $12A$ of the surface (\ref{eq:3C-octa}). 
	By the coordinate change $(x_0, x_1) \mapsto (x_0 + \zeta_3x_1, \zeta_3x_0 + x_1)$, 
	the surface (\ref{eq:3C-octa}) is defined by the equation 
	\begin{equation}\label{eq:3C=5A=12A_char2_1}
		x_0^3 + x_1^3 + x_2x_3(x_2 + x_3) = 0, 
	\end{equation}
	which admits the automorphism $\diag (\zeta_3,1,1,1)$ of type $3A$. 
	The elliptic curve on the axis $x_0 = 0$ 
	has the automorphism $(x_1, x_2, x_3) \mapsto (x_1 + x_3, x_1 + x_2 + \zeta_3 x_3, x_3)$ of order $4$. 
	Then the surface (\ref{eq:3C=5A=12A_char2_1}) admits the extended automorphism by fixing $x_0$, 
	which is of type $4A$. 
	The product with the one of type $3A$ is the automorphism of type $12A$: 
	\[ 
	\begin{bmatrix}
		\zeta_3 & 0 & 0 & 0 \\
		0 & 1 & 0 & 1 \\
		0 & 1 & 1 & \zeta_3 \\
		0 & 0 & 0 & 1
	\end{bmatrix}.
	\]
\end{proof}

\subsection{The stratum $8A$}
The stratum $8A$ is of dimension $0$ in $\mathcal{M}(k)$. 
In $p\neq 2,3$, the corresponding surface admits only one Eckardt point, which has little information as in the case of the stratum $4A$. 
In $p=2$, 
there is no cubic surface which admits an automorphism of type $8A$ \cite[Lem 12.11]{DD19}. 
So let $p\neq 2$. 

Let $X$ be a cubic surface admitting an automorphism $g$ of type $4A$. 
When the elliptic curve on the axis of $g^2$ has an automorphism $h'$ of order $4$, 
a suitable extension $h$ of $h'$ to an automorphism of $X$ is commuting with $g^2$, 
and the product $g^2h$ is the one of type $8A$. 
We will give the condition for the parameters of the $4A$-octanomial normal form (\ref{eq:4A-octa}) 
which admits an automorphism of type $8A$. 

\begin{thm}[The $8A$-octanomial normal form]\label{8A-octa}
	Every cubic surface admitting an automorphism of type $8A$ is 
	projectively isomorphic to the surface 
	defined by the octanomial form with $a_0,a_2,a_3$ satisfying the condition $(\dag)$: 
	\begin{equation}\label{eq:8A-octa}
		x_0x_1(x_0 + x_1 + a_3 x_2 + a_2 x_3 ) + x_2x_3(a_0 x_0 + a_0x_1 + x_2 + x_3) = 0.
	\end{equation}
	$(\dag)$: 
	Satisfying the condition $(\ast)$ of Theorem \ref{4A-octa}, 
	and $\gamma (1-a_0a_2) - 2 \delta (1-a_0a_3) = 0$. 
	
	The surface given by the above has the automorphism of type $8A$: 
	\begin{equation}\label{eq:8A-auto}
		\begin{bmatrix}
		S + \frac{\zeta_8}{2} & 
		S - \frac{\zeta_8}{2} & 
		a_3(S + \frac{1}{2}) & 
		a_2(S -\frac{1}{2}) + \frac{a_3\delta}{\gamma} \\
		S - \frac{\zeta_8}{2} & 
		S + \frac{\zeta_8}{2} & 
		a_3(S + \frac{1}{2}) & 
		a_2(S - \frac{1}{2}) + \frac{a_3 \delta}{\gamma} \\
		T & 
		T & 
		a_3 T -1 & 
		a_2 T - \frac{2\delta}{\gamma} \\
		-(1+i) \beta & 
		-(1+i) \beta & 
		-a_3(1+i) \beta & 
		1 - a_2(1+i) \beta
		\end{bmatrix}. 
	\end{equation}
	Here, 
	$\alpha = a_2^2 / (4(a_0a_2-1))$,
	$\beta = a_3^2 / (4(a_0a_3-1))$,  
	$\gamma = \beta^2(1-a_0a_2) + 2 \alpha \beta (1 - a_0a_3) + a_0 \beta - a_3(1 - A)/2$, 
	$\delta = \alpha^2 (1-a_0a_3) + 2 \alpha \beta (1 - a_0a_2) + a_0 \alpha - a_2(1 - A)/2$, 
	$A = a_3 \alpha + a_2 \beta$ in Theorem \ref{4A-octa}. 
	And 
	$S = - ( (1-i)A + i)/2$, 
	$T = \alpha (1 - i) + 2 \beta \delta/\gamma$. 
\end{thm}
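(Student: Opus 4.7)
The proof will follow the template of Theorem \ref{12A-octa}, with the roles $3A \to 12A$ replaced by $4A \to 8A$. Since the specialization diagram shows $4A \to 8A$, any cubic surface $X$ admitting an automorphism of type $8A$ also admits one of type $4A$, and by Theorem \ref{4A-octa} it can be brought to the $4A$-octanomial form (\ref{eq:4A-octa}) with parameters satisfying condition $(\ast)$. The task is then to identify precisely which such surfaces admit a further order-$8$ symmetry, to record this as an explicit polynomial condition on $(a_0, a_2, a_3)$, and to exhibit the resulting automorphism as a matrix.

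I would work in the intermediate coordinates of (\ref{eq:4A_3}) used in the proof of Theorem \ref{4A-octa}, in which the type-$4A$ automorphism acts as $(x_0, x_1, x_2, x_3) \mapsto (i x_0, -x_1, x_2, x_3)$. For a hypothetical type-$8A$ automorphism $g$ whose square is this type-$4A$ element, the fixed hyperplane of $g^4$ is $\{x_0 = 0\}$ and its intersection with $X$ is the plane cubic
\[
E \colon x_1^2(\gamma x_2 + \delta x_3) + (1-a_0 a_3) x_2^2 x_3 + (1-a_0 a_2) x_2 x_3^2 = 0,
\]
where $\gamma, \delta$ are as in Theorem \ref{4A-octa}. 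Since $g$ stabilizes $E$ and must act on it with order $4$, the elliptic curve $E$ has to be \emph{harmonic}, i.e.\ have $j$-invariant $1728$.

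Bringing $E$ into short Weierstrass form by picking a flex and rescaling, then simplifying using $(\ast)$, the coefficient of the linear term of the resulting Weierstrass equation turns out, up to a nonzero factor, to be $\gamma(1-a_0 a_2) - 2 \delta(1-a_0 a_3)$; its vanishing is exactly $j(E) = 1728$ and gives the additional clause of $(\dag)$. Conversely, under $(\dag)$ the curve $E$ admits an order-$4$ automorphism $h'$. Lifting $h'$ to a linear automorphism of $\mathbb{P}^3$ preserving the surface is determined up to a scalar by linear algebra, and composing the lift with the type-$4A$ matrix (\ref{eq:4A-auto}) (or an appropriate power) and transporting back to the octanomial coordinates yields (\ref{eq:8A-auto}). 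The resulting matrix has order $8$, as its top-left $2 \times 2$ block has $\zeta_8$ among its eigenvalues, and since $8A$ is the unique Carter class of order $8$ in $W(\mathsf{E}_6)$ no finer type check is necessary.

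The main obstacle will be the middle step: going cleanly from ``$j(E) = 1728$'' to the compact equation $\gamma(1-a_0 a_2) - 2 \delta(1-a_0 a_3) = 0$. A brute-force computation of $j(E)$ from the given plane model produces a high-degree rational function in $a_0, a_2, a_3$, and recovering the stated simple relation requires both a judicious choice of flex on $E$ and systematic use of the relations $(\ast)$ to cancel extraneous factors. Once the condition is isolated, the construction of the lift and the identification of the resulting matrix as type $8A$ are routine linear algebra, closely paralleling the end of the proof of Theorem \ref{12A-octa}.
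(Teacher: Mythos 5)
Your outline coincides with the paper's: pass to the coordinates of (\ref{eq:4A_3}), require the plane cubic $E = X \cap \{x_0=0\}$ to be harmonic, and lift its order-$4$ automorphism; and your observation that order $8$ already forces type $8A$ is correct and is what the paper implicitly relies on. However, the step that extracts the condition $(\dag)$ is stated incorrectly. For a Weierstrass model $y^2 = x^3 + px + q$ one has $j = 1728$ iff the \emph{constant} term $q$ vanishes; vanishing of the \emph{linear} term $p$ gives $j = 0$, i.e.\ an order-$3$ symmetry, which is not what you want. In the chart $x_2 = 1$, $y = x_1$, $x = x_3$, after the normalization $x_2 \mapsto (x_2 - \delta x_3)/\gamma$ the curve $E$ reads $y^2 = -(c_3x^3 + c_2x^2 + c_1x)$ with constant term already zero; here $c_1 = (1-a_0a_3)/\gamma^2$, whose vanishing is a degenerate condition having nothing to do with $(\dag)$, while $\gamma(1-a_0a_2) - 2\delta(1-a_0a_3)$ is $\gamma^2 c_2$, the \emph{quadratic} coefficient. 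So your recipe taken literally yields the wrong equation; the correct statement is that $(\dag)$ kills $c_2$, leaving $y^2 = -c_3x^3 - c_1x$, which is visibly harmonic. Moreover, ``its vanishing is exactly $j(E)=1728$'' overclaims: depressing the cubic shows the $j=1728$ locus is $c_2(2c_2^2 - 9c_1c_3)=0$, of which $(\dag)$ is only one branch. This does not hurt the theorem --- the stratum is a point, so exhibiting one surface in it with the symmetry suffices --- but the asserted equivalence is false.

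Separately, the ``main obstacle'' you anticipate (a high-degree $j$-invariant computation requiring $(\ast)$ to simplify) does not arise in the paper's proof: the single linear substitution $x_2 \mapsto (x_2-\delta x_3)/\gamma$ already puts the surface in the form $-x_0^2x_1 + x_1^2x_2 + Ax_2^2x_3 + Bx_3^3 = 0$ once the coefficient of $x_2x_3^2$ is set to zero, and this equation is manifestly preserved by $\diag(\zeta_8,-i,-1,1)$; no $j$-invariant is ever computed.
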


\begin{proof}
	Let $p\neq 2$. 
	We start from the $4A$-octanomial normal form (\ref{eq:4A-octa}). 
	By coordinate change, 
	it is defined by the equation (\ref{eq:4A_3}): 
	\[ -x_0^2x_1 
		+ x_1^2( 
			\gamma x_2
			+ \delta x_3 )
		+(1-a_0a_3)x_2^2x_3 + (1-a_0a_2)x_2x_3^2 = 0, \]
	which admits the automorphism $\diag (i,-1,1,1)$ of type $4A$. 

	By applying the coordinate change 	
	$x_2 \mapsto (x_2 - \delta x_3)/ \gamma$, 
	the surface is defined by 
	\begin{equation}\label{eq:8A_1}
		-x_0^2 x_1 + x_1^2 x_2 
		+  \frac{1-a_0a_3}{\gamma^2} x_2^2x_3 
		+ \frac{\delta(\delta(1-a_0a_3) - \gamma(1-a_0a_2))}{\gamma^2} x_3^3 = 0.
	\end{equation}
	The term $x_2x_3^2$ vanishes, 
	and this surface admits the automorphism 
	$\diag (\zeta_8, -i, -1, 1)$ of type $8A$. 
\end{proof}

\subsection{The stratum $8A = 12A$ in $p=3$} \label{sec:8A=12A_char3}

In $p=3$, the strata $8A$ and $12A$ coincide \cite[Lemma 12.13]{DD19}. 
We will give the octanomial form of the corresponding surface which preserves the specialization 
from the strata $3A$ and $4A$.  

\begin{thm}[The $8A=12A$-octanomial normal form in $p=3$]\label{8A=12A-octa_char3}
	In $p=3$, 
	every cubic surface admitting an automorphism of type $8A$ or $12A$ is projectively isomorphic to the surface 
	given by the octanomial form with $a_0 = a_1 = 0$, $a_2 = i$, and $a_3 = -i$: 
	\begin{equation}\label{eq:8A=12A-octa_char3}
		x_0x_1 ( x_0 + x_1 -ix_2 + ix_3) + x_2x_3( x_2 + x_3) = 0.
	\end{equation}
	The surface has the automorphism of both type $8A$ and $12A$: 
	\begin{equation}\label{eq:8A=12A-auto_char3}
	\begin{bmatrix}
		-\zeta_8 + i & \zeta_8 + i & 1+i & -(1+i) \\
		\zeta_8 + i & -\zeta_8 + i & 1+i & -(1+i) \\
		-(1+i) & -(1+i) & 1+i & -i \\
		-(1+i) & -(1+i) & -1+i & -(1+i)
	\end{bmatrix}, \ 
	\begin{bmatrix}
		1 - i & 1 + i & i & -i \\
		1 + i & 1 - i & i & -i \\
		1 & 1 & -(1 + i) & -1 + i \\
		1 & 1 & 1 - i & i 
	\end{bmatrix}. 
	\end{equation}
\end{thm}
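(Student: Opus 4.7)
The plan is to follow the same template as Theorem \ref{12A-octa}, adapted to characteristic $3$. Since by \cite[Lemma 12.13]{DD19} the strata $8A$ and $12A$ coincide in $p=3$, and since $(12A)^4 = 3A$ and $(12A)^3 = 4A$, every surface admitting an automorphism of type $8A$ or $12A$ also admits one of type $3A$. Therefore I would start from the $3A$-octanomial normal form in $p=3$ provided by Theorem \ref{3A-octa}, namely
\[
x_0x_1(x_0+x_1+a_3x_2-a_3x_3) + x_2x_3(x_2+x_3) = 0,
\]
and determine the additional condition on $a_3$ required for the surface to admit a commuting automorphism of order $4$ whose product with the fixed automorphism of type $3A$ is of type $12A$.

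First I would apply the coordinate change $x_3 \mapsto x_2 - x_3$ used in the proof of Theorem \ref{3A-octa} so that the automorphism of type $3A$ acts as $x_2 \mapsto x_2 + x_3$ with pointwise-fixed plane $A$ defined by $x_3=0$. Restricting the surface to $A$ yields a plane cubic $C$, which must carry an order-$4$ automorphism whose extension to the ambient $\mathbb{P}^3$ (fixing $x_3$ up to a scalar compatible with commuting with the $3A$-action) preserves the surface. Mimicking the coordinate changes in Theorem \ref{12A-octa}, I would normalize $C$ by absorbing the linear and quadratic cross-terms into $x_0,x_1,x_2$, then impose the vanishing of the appropriate coefficients so that $C$ gains an order-$4$ automorphism. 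In characteristic $3$ the classical Weierstrass reduction degenerates (partial derivatives in $x_2$ of $x_2^3$ vanish), so instead of solving a sextic as in Theorem \ref{12A-octa} one obtains a much shorter condition which, after solving, forces $a_3 = -i$, equivalently $a_2 = i$ and $a_2 + a_3 = 0$.

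Once the value of $a_3$ is pinned down, the extension $h$ of the order-$4$ automorphism of $C$ to the surface commutes with the $3A$-automorphism $g$; then $g^2 h$ is of type $12A$ and $h$ itself (or a suitable power-product) is of type $8A$. The final step is the verification: one checks directly that the two displayed matrices preserve the polynomial \eqref{eq:8A=12A-octa_char3} up to scalar, compute their orders, and match their orbit structure on the $27$ lines (or equivalently their cycle type in $W(\mathsf{E}_6)$) with the Carter labels $8A$ and $12A$. The main obstacle is the characteristic-$3$ degeneration: the elliptic curve on the axis is not in standard Weierstrass form, so detecting when it gains an order-$4$ automorphism requires a separate ad hoc normalization rather than a direct transplant of the char-$0$ computation from Theorem \ref{12A-octa}; once this is resolved, the remaining construction and verification are routine linear algebra.
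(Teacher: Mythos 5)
Your overall strategy---reduce to the $3A$-octanomial normal form and then hunt for a commuting order-$4$ automorphism via the cubic curve on the axis, as in Theorem \ref{12A-octa}---runs into a concrete obstruction in characteristic $3$ that is more severe than the ``degenerate Weierstrass reduction'' you anticipate. After the coordinate change of Theorem \ref{3A-octa}, the surface is $-x_2^3 + x_2x_3^2 + a_3x_0x_1x_3 + x_0x_1(x_0+x_1)=0$ with the type-$3A$ automorphism $x_2\mapsto x_2+x_3$ and axis $x_3=0$. The curve cut out on the axis is $-x_2^3 + x_0x_1(x_0+x_1)=0$, which does not involve $a_3$ at all (the $a_3$-term carries a factor of $x_3$); moreover, writing $s=x_0+x_1$ and $d=x_0-x_1$, this curve equals $(s-x_2)^3 - sd^2$ in characteristic $3$, a rational cuspidal cubic with cusp at $(1,1,-1,0)$ (the axis is the tangent plane of the surface there). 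So the curve on the axis is the same singular cubic for every $a_3$, and the criterion ``this curve admits an order-$4$ automorphism'' cannot single out $a_3=\pm i$: the step where you ``impose the vanishing of the appropriate coefficients so that $C$ gains an order-$4$ automorphism'' produces no condition. The condition on $a_3$ would instead have to come from asking which automorphisms of $\mathbb{P}^3$ preserving the surface commute with the wild order-$3$ action, which your plan does not set up.

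The paper's own proof sidesteps this entirely and is a pure verification: one checks that $(a_0,a_1,a_2,a_3)=(0,0,i,-i)$ satisfies the $3A$-condition $a_2+a_3=0$, the condition $(\ast)$ of Theorem \ref{4A-octa} (here $p=0$, $q=1$, so $q^3+4pq+8=9=0$ and $3q-p^2=0$ in characteristic $3$), and the condition $(\dag)$ of Theorem \ref{8A-octa} (here $\gamma=-i$, $\delta=i$). Those conditions were derived for all $p\neq 2$ from the elliptic curve $x_0=0$ in the $4A$-normalization (\ref{eq:4A_2}), not from the axis of the $3A$-automorphism, so they survive in characteristic $3$; the displayed matrices then come from (\ref{eq:3A-auto}) and (\ref{eq:4A-auto}), which commute, and their product is of type $12A$. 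Surjectivity onto the stratum is automatic since $8A=12A$ is a single point of $\mathcal{M}(k)$ by \cite[Lemma 12.13]{DD19}. If you want to keep a derivational argument, the machinery to specialize to $p=3$ is that of Theorems \ref{4A-octa} and \ref{8A-octa}, not that of Theorem \ref{12A-octa}.
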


\begin{proof}
	The octanomial form (\ref{eq:8A=12A-octa_char3}) satisfies 
	the condition of the strata $3A$ (\ref{eq:3A-octa}) 
	and 4A (\ref{eq:4A-octa}), 
	where $q = 1$ and $p=0$ in Theorem \ref{4A-octa} $(\ast)$. 
	In addition, 
	since $\gamma = -i$ and $\delta = i$, 
	the condition $(\dag)$ of Theorem \ref{8A-octa} is also satisfied. 
	The automorphism of type $3A$ is given by (\ref{eq:3A-auto}), 
	which is commuting with the one of type $4A$
	\[
	\begin{bmatrix}
		1 - i & 1 + i & i & -i \\
		1 + i & 1 - i & i & -i \\
		1 & 1 & 1 - i & i  \\
		1 & 1 & -i  & 1 + i 
	\end{bmatrix}
	\]
	given by (\ref{eq:4A-auto}). 
	The product of them is the one of type $12A$. 
\end{proof}

\vspace{10pt}
\subsection*{Acknowledgments}\vspace{-4pt}
The author thanks her adviser Hisanori Ohashi for continued support, 
and thanks Ryoya Ando, Makoto Enokizono, Tomoya Namba, Kengo Maehara, Yuya Matsumoto for helpful comments and discussions. 


\end{document}